\DeclareMathOperator{\Supp}{Supp} \DeclareMathOperator{\Pic}{Pic}
\DeclareMathOperator{\codim}{codim} \DeclareMathOperator{\im}{im}
\DeclareMathOperator{\texthocolim}{hocolim}
\DeclareMathOperator{\hocolim}{\underrightarrow{\texthocolim}}
\DeclareMathOperator{\textholim}{holim}
\DeclareMathOperator{\holim}{\underleftarrow{\textholim}}
\DeclareMathOperator{\coker}{coker} \DeclareMathOperator{\Ext}{Ext}
\DeclareMathOperator{\Hom}{Hom} \DeclareMathOperator{\Tr}{Tr}
\newtheorem{theorem}{Theorem}[section]
\newtheorem{lemma}[theorem]{Lemma}
\newtheorem{proposition}[theorem]{Proposition}
\theoremstyle{definition}
\newtheorem{definition}[theorem]{Definition}
\newtheorem{example}[theorem]{Example}
\theoremstyle{remark}
\numberwithin{equation}{section}
\begin{document}

\title{On the Generic Vanishing Theorem of Cartier Modules}

\author{Alan Marc Watson}
\address{Department of Mathematics, University of Utah, Salt Lake City, Utah 84102}
\curraddr{155 S 1400 E Room 233, Salt Lake City, Utah 84112}
\email{watson@math.utah.edu}

\author{Yuchen Zhang}
\address{Department of Mathematics, University of Utah, Salt Lake City, Utah 84102}
\email{yzhang@math.utah.edu}


\date{\today}


\keywords{Generic Vanishing theorem, characteristic $p>0$, Cartier
module, inverse system, M-regularity}

\begin{abstract}
We generalize the Generic Vanishing theorem by Hacon and Patakfalvi
in the spirit of Pareschi and Popa. We give several examples
illustrating the pathologies appearing in the positive
characteristic setting.
\end{abstract}

\maketitle

\section{Introduction}

Let $X$ be a smooth projective variety, let $a:X\to A$ be the
Albanese morphism and let $$V^i(\omega_X)=\{P\in
\Pic^0(X)|H^i(X,\omega_X\otimes P)\neq 0\}$$ be the cohomology
support loci. In \cite{GL90} and \cite{GL91}, Green and Lazarsfeld
proved the following theorem which is an essential result in the
study of irregular varieties (see, for example,
\cite{F09},\cite{JLT}and \cite{S93}).

\begin{theorem}
Let $X$ be a smooth complex projective variety. Then every
irreducible component of $V^i(\omega_X)$ is a translate of a
subtorus of $\Pic^0(X)$ of codimension at least $$i-\dim X+\dim
a(X).$$ If $X$ has maximal Albanese dimension, then there are
inclusions:
$$V^0(\omega_X)\supset V^1(\omega_X)\supset \cdots \supset V^{\dim
X}(\omega_X)=\{\mathcal{O}_X\}.$$
\end{theorem}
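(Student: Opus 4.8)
The plan is to reduce the statement, using Kollár's theorems, to a ``generic vanishing estimate'' for the direct image sheaves $R^j a_* \omega_X$ on $A = \mathrm{Alb}(X)$, to establish that estimate by Hodge theory (the derivative complex), and then to assemble the remaining assertions from the known structure of cohomology support loci. Throughout write $n = \dim X$, $d = \dim a(X)$, and identify $\Pic^0(X) = \Pic^0(A) =: \hat A$ via $a^*$.

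First I would dispatch the endpoint. By Serre duality $H^{n}(X, \omega_X \otimes P) \cong H^0(X, P^{-1})^{\vee}$, and for $P \in \Pic^0(X)$ the right-hand side is nonzero precisely when $P \cong \mathcal O_X$; hence $V^{n}(\omega_X) = \{\mathcal O_X\}$, which will also serve as the bottom of the chain of inclusions. Next I would reduce to $A$: Kollár's decomposition theorem gives $R a_* \omega_X \cong \bigoplus_{j \ge 0} R^j a_* \omega_X[-j]$, with each $R^j a_* \omega_X$ torsion-free on $a(X)$ and $R^j a_* \omega_X = 0$ for $j > n - d$. Twisting by $P$ — which is pulled back from $A$ — and using the projection formula, $H^i(X, \omega_X \otimes P) \cong \bigoplus_{j=0}^{n-d} H^{i-j}(A, R^j a_* \omega_X \otimes P)$, so that
\[
V^i(\omega_X) = \bigcup_{j=0}^{\,n-d} V^{\,i-j}\!\big(R^j a_* \omega_X\big), \qquad V^k(\mathcal F) := \{P \in \hat A : H^k(A, \mathcal F \otimes P) \neq 0\}.
\]
It therefore suffices to prove that each $\mathcal F_j := R^j a_* \omega_X$ is a \emph{GV-sheaf} on $A$, i.e. $\codim_{\hat A} V^k(\mathcal F_j) \ge k$ for all $k$, and that every component of $V^k(\mathcal F_j)$ is a translate of a subtorus of $\hat A$. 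Granting these, a component of $V^i(\omega_X)$ lies in some $V^{i-j}(\mathcal F_j)$ with $0 \le j \le n - d$, hence is a subtorus translate of codimension $\ge i - j \ge i - (n - d) = i - \dim X + \dim a(X)$, which is the first assertion.

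The crux — and the step I expect to be the main obstacle — is showing each $\mathcal F_j$ is a GV-sheaf. The heart is the \emph{generic} statement, $H^k(A, R a_* \omega_X \otimes P) = 0$ for all $k > 0$ and $P$ outside a proper closed subset of $\hat A$; following Hacon this comes from Kollár's torsion-freeness and vanishing theorems together with the Fourier--Mukai transform (this is the reading ``in the spirit of Pareschi and Popa''), while in Green--Lazarsfeld's original approach it is the assertion that the derivative complex $\big(\bigoplus_k H^k(X, \omega_X \otimes P),\ \cup v\big)$ is exact except at the top for generic $v \in H^1(X, \mathcal O_X)$. The upgrade from this generic statement to the codimension estimate $\codim V^k(\mathcal F_j) \ge k$ is again a Hodge-theoretic computation with the derivative complex: the Zariski tangent cone to $V^k(\mathcal F_j)$ at a point $P$ is contained in the set of $v \in H^1(A, \mathcal O_A) = T_0 \hat A$ for which
\[
H^{k-1}(A, \mathcal F_j \otimes P) \ \xrightarrow{\ \cup v\ }\ H^{k}(A, \mathcal F_j \otimes P) \ \xrightarrow{\ \cup v\ }\ H^{k+1}(A, \mathcal F_j \otimes P)
\]
fails to be exact in the middle, and — using that the relevant cup products factor through exterior powers of $H^1$ whose behaviour is controlled by the holomorphic $1$-forms (and that $\mathcal F_j$ is supported in dimension $d$, so $V^k(\mathcal F_j) = \emptyset$ for $k > d$) — this locus has codimension $\ge k$.

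Finally I would address the subtorus structure and the inclusions. That every irreducible component of $V^k(\mathcal F_j)$, hence of $V^i(\omega_X)$, is a translate of an abelian subvariety of $\hat A$ is the structure theorem for cohomology support loci (Green--Lazarsfeld, Arapura, and Simpson in the strongest form); in the Hodge-theoretic proof it is a consequence of the fact that the tangent cone to $V^k$ at any point is already a linear subspace of $H^1(A, \mathcal O_A)$, and linear subspaces exponentiate to subtori. For the chain of inclusions, assume $X$ has maximal Albanese dimension, so $d = n$, $a$ is generically finite, $R^j a_* \omega_X = 0$ for $j > 0$, and $V^i(\omega_X) = V^i(A, a_* \omega_X)$ for every $i$; the nesting $V^0 \supseteq V^1 \supseteq \cdots$ then follows from the results of \cite{GL91} by a finer study of the derivative complex as a module over $\bigwedge^{\bullet} H^1(X, \mathcal O_X)$, in which the maximal Albanese dimension hypothesis makes cup product by a generic $1$-form as nondegenerate as possible and forces the jumping loci of successive cohomology groups to be nested. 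Together with the endpoint computed above, this gives $V^0(\omega_X) \supseteq V^1(\omega_X) \supseteq \cdots \supseteq V^{n}(\omega_X) = \{\mathcal O_X\}$, completing the proof.
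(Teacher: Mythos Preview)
The paper does not prove this theorem at all: it is quoted in the introduction as a known result of Green and Lazarsfeld \cite{GL90,GL91}, with the remark that ``the theorem was first proven using Hodge theory'' and that an alternative Fourier--Mukai viewpoint emerged in \cite{H04} and \cite{PP11}. There is therefore no ``paper's own proof'' to compare your proposal against; the theorem serves only as motivation for the paper's actual results (Theorems \ref{main_HP13}--\ref{main}), which concern inverse systems of coherent sheaves in positive characteristic.

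That said, your outline is a fair summary of how the result is established in the literature: the reduction to $A$ via Koll\'ar's decomposition and vanishing, the GV property for each $R^j a_*\omega_X$ via the derivative complex (or, in the derived-category approach, via the Fourier--Mukai transform and Theorem \ref{main_PP11}), the subtorus structure from Green--Lazarsfeld/Simpson, and the chain of inclusions in the maximal Albanese dimension case from \cite{GL91}. The one place where your sketch is thinnest is precisely the step you flag as the main obstacle---the actual codimension estimate from the derivative complex---but since the paper does not attempt this either, there is nothing further to compare.
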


The theorem was first proven using Hodge theory. An alternative
point of view using the Fourier-Mukai transforms $R\hat{S}$ and $RS$
emerged in \cite{H04} and \cite{PP11}. Specifically, in \cite{PP11},
Pareschi and Popa proved the following theorem.

\begin{theorem}\label{main_PP11}
Let $A$ be an abelian variety. Let $\mathcal{F}$ be a coherent sheaf
on $A$. The following are equivalent:
\begin{enumerate}
\item For any sufficiently ample line bundle $L$ on $\hat{A}$,
$H^i(A, \mathcal{F}\otimes \hat{L}^\vee)=0$ for any $i>0$, where
$\hat{L}=R^0\hat{S}(L)=R\hat{S}(L)$,
\item $R^i\hat{S}(D_A(\mathcal{F}))=0$ for any $i\neq 0$,
\item $\codim \Supp R^i\hat{S}(\mathcal{F})\geqslant i$ for any $i\geqslant
0$, and
\item $\codim V^i(\mathcal{F})\geqslant i$ for any $i\geqslant 0$.
\end{enumerate}
\end{theorem}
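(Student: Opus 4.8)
The plan is to deduce the four equivalences from the Fourier--Mukai formalism on $A$ and $\hat A$, using four inputs: Mukai's inversion theorem $RS\circ R\hat S\cong(-1_A)^*[-g]$ with $g=\dim A$; the exchange-of-duality isomorphism $R\hat S(D_A(\mathcal G))\cong(-1_{\hat A})^*R\mathcal{H}om_{\hat A}(R\hat S(\mathcal G),\mathcal{O}_{\hat A})$; Serre duality on $A$ and $\hat A$; and cohomology-and-base-change. I would first record that for $L$ ample on $\hat A$ every translate $L\otimes P$ with $P\in\Pic^0(\hat A)$ is ample, so $R^i\hat S(L)=0$ for $i>0$ and $\hat L=R\hat S(L)$ is locally free of rank $\chi(L)$.

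The computational backbone is the chain
$$H^i(A,\mathcal F\otimes\hat L^\vee)^\vee\;\cong\;\Ext^{g-i}_A(\mathcal F,\hat L)\;\cong\;\mathbb H^{-i}\big(A,D_A(\mathcal F)\otimes\hat L\big)\;\cong\;\mathbb H^{-i}\big(\hat A,R\hat S(D_A(\mathcal F))\otimes L\big),$$
valid for every ample $L$: Serre duality on $A$; then local freeness of $\hat L$ together with $R\mathcal{H}om_A(\mathcal F\otimes\hat L^\vee,\omega_A)\cong R\mathcal{H}om_A(\mathcal F,\omega_A)\otimes\hat L$; then two applications of the projection formula for the Poincaré kernel, which carry $\hat L=R\hat S(L)$ to the $\hat A$-side. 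This gives $(2)\Rightarrow(1)$ at once: if $R\hat S(D_A(\mathcal F))$ is a sheaf in degree $0$ the right-hand side is the cohomology of a sheaf, hence vanishes for $i>0$. For $L$ sufficiently ample the hypercohomology spectral sequence of the right-hand side degenerates by Serre vanishing and identifies it with $H^0(\hat A,R^{-i}\hat S(D_A(\mathcal F))\otimes L)$; since $R^{-i}\hat S(D_A(\mathcal F))\otimes L$ is globally generated for $L\gg0$, this vanishes for all $i>0$ precisely when $R^j\hat S(D_A(\mathcal F))=0$ for all $j<0$. Writing $E:=R\hat S(\mathcal F)$, exchange of duality turns the latter into $R\mathcal{H}om_{\hat A}(E,\mathcal{O}_{\hat A})\in D^{\ge0}$, i.e. $\mathcal{E}xt^j_{\hat A}(E,\mathcal{O}_{\hat A})=0$ for $j<0$. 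Feeding the hyper-$\mathcal{E}xt$ spectral sequence $\mathcal{E}xt^p_{\hat A}(\mathcal H^{-q}(E),\mathcal{O}_{\hat A})\Rightarrow\mathcal{E}xt^{p+q}_{\hat A}(E,\mathcal{O}_{\hat A})$ with the estimate $\codim\Supp\mathcal{E}xt^p_{\hat A}(-,\mathcal{O}_{\hat A})\geqslant p$, and conversely using biduality, shows that $R\mathcal{H}om_{\hat A}(E,\mathcal{O}_{\hat A})\in D^{\ge0}$ is equivalent to $\codim\Supp R^i\hat S(\mathcal F)\geqslant i$ for all $i$. This already delivers $(1)\Leftrightarrow(3)$, as well as $(2)\Rightarrow(3)$ (if $R\mathcal{H}om_{\hat A}(E,\mathcal{O}_{\hat A})$ is a sheaf, biduality gives $E\cong R\mathcal{H}om_{\hat A}(\mathcal G,\mathcal{O}_{\hat A})$ for a coherent sheaf $\mathcal G$, and the $\mathcal{E}xt$-codimension estimate applies to $\mathcal H^i(E)=\mathcal{E}xt^i_{\hat A}(\mathcal G,\mathcal{O}_{\hat A})$).

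To complete the cycle one needs a converse landing in $(2)$, say $(3)\Rightarrow(2)$. The codimension hypothesis $(3)$ gives $R\mathcal{H}om_{\hat A}(E,\mathcal{O}_{\hat A})\in D^{\ge0}$, but for concentration in degree $0$ one must also kill all positive-degree $\mathcal{E}xt$-sheaves, and this is not a consequence of the codimension bounds alone (a direct sum such as $\mathcal{O}_{\hat A}\oplus k(x)[-1]$ meets them). The extra ingredient is Mukai's inversion theorem: $RS(E)\cong(-1_A)^*\mathcal F[-g]$ is concentrated in a single cohomological degree, a rigidity of $E$ invisible to the supports of its cohomology sheaves, and it is precisely this that forces $R\mathcal{H}om_{\hat A}(E,\mathcal{O}_{\hat A})$ — hence $R\hat S(D_A(\mathcal F))$ — to collapse to a sheaf in degree $0$. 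I expect this implication to be the main obstacle.

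Lastly $(3)\Leftrightarrow(4)$ is base-change bookkeeping: one always has $\Supp R^i\hat S(\mathcal F)\subseteq V^i(\mathcal F)$, giving $(4)\Rightarrow(3)$, while $(3)\Rightarrow(4)$ follows by descending induction on $i$, using the base-change spectral sequence near a point $P\in\hat A$ to bound the jumping locus of $h^i(A,\mathcal F\otimes P)$ by the supports of the sheaves $R^j\hat S(\mathcal F)$ with $j$ in a controlled range around $i$, so that the codimension bounds of $(3)$ force $\codim V^i(\mathcal F)\geqslant i$.
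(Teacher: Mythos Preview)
The paper does not contain a proof of Theorem~\ref{main_PP11}: it is quoted in the introduction as a result of Pareschi and Popa \cite{PP11} (building on \cite{H04}), and the paper's contribution (Theorems~\ref{WIT=limKV}, \ref{WITtoGV}, \ref{GVtolimKV}) is to generalize it to inverse systems rather than to re-prove it. So there is no in-paper argument to set yours against; what one can compare with is the argument in \cite{H04,PP11}, and your outline follows that route.

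Your sketch is correct, but you are making the step $(3)\Rightarrow(2)$ harder than it needs to be. The vanishing $R^{j}\hat S(D_A(\mathcal F))=0$ for $j>0$ is automatic and drops out of the very chain of isomorphisms you wrote: taking $i=-j<0$ gives
\[
\mathbb H^{\,j}\bigl(\hat A,\;R\hat S(D_A(\mathcal F))\otimes L\bigr)\;\cong\;H^{-j}(A,\mathcal F\otimes\hat L^{\vee})^{\vee}\;=\;0
\]
for every $j>0$ and every ample $L$, simply because $\mathcal F$ is a single sheaf. For $L$ sufficiently ample the same Serre-vanishing degeneration you already invoked then forces $\mathcal H^{j}\bigl(R\hat S(D_A(\mathcal F))\bigr)=0$. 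Thus $R\hat S(D_A(\mathcal F))\in D^{\le 0}$ holds unconditionally, while your argument for $(1)$ yields $R\hat S(D_A(\mathcal F))\in D^{\ge 0}$; together these give $(2)$ directly. The loop closes via $(3)\Rightarrow(1)\Rightarrow(2)$ with no separate appeal to Mukai inversion and no ``main obstacle''. This a~priori bound is exactly what the paper uses in the inverse-system setting at the start of the proof of Theorem~\ref{WIT=limKV}, where it is cited from \cite[Theorem~3.1.1]{HP}.
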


The theorem holds even in positive characteristic. But, in order to
apply it to the canonical bundle of irregular varieties via Albanese
maps, we need the result of Koll\'{a}r in \cite{K86I} and
\cite{K86II} or Grauert-Reimanschneider Vanishing which is known to
fail in positive characteristic (see \cite{HK}). In \cite{HP}, Hacon
and Patakfalvi suggested that, instead of $R^ia_*\omega_X$, we
should consider the inverse limit of the push-forwards of
$R^ia_*\omega_X$ by the Frobenius map. In particular, they proved
the the following results.

\begin{theorem}\label{main_HP13}
Let $k$ be an algebraically closed field of characteristic $p>0$ and
$A$ be an abelian variety over $k$. Let $\{\Omega_e\}$ be an inverse
system of coherent sheaves on $A$ such that for any sufficiently
ample line bundle $L$ on $\hat{A}$ and any $e\gg 0$,
$H^i(A,\Omega_e\otimes \hat{L}^\vee)=0$ for all $i>0$. Then, the
complex $$\Lambda=\hocolim R\hat{S}(D_A(\Omega_e))$$ is a
quasi-coherent sheaf in degree 0, i.e.,
$\Lambda=\mathcal{H}^0(\Lambda)$. Here $\hocolim$ is a
generalization of direct limit to derived category (see Section
\ref{sec_derived_category}).
\end{theorem}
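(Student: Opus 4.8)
The plan is to reduce the statement to a cohomological vanishing that can be checked after applying the Fourier–Mukai functor $R\hat S$, and then to exploit the exactness properties of $\hocolim$ in the derived category. First I would recall that $\Lambda = \hocolim R\hat S(D_A(\Omega_e))$ sits in a distinguished triangle
\begin{equation*}
\bigoplus_e R\hat S(D_A(\Omega_e)) \xrightarrow{\;1-\text{shift}\;} \bigoplus_e R\hat S(D_A(\Omega_e)) \longrightarrow \Lambda \xrightarrow{\;+1\;},
\end{equation*}
so that for each $i$ there is a short exact sequence relating $\mathcal H^i(\Lambda)$ to a $\lim$ and a $\lim^1$ of the sheaves $\mathcal H^i\big(R\hat S(D_A(\Omega_e))\big)$; in particular it suffices to show $\mathcal H^i\big(R\hat S(D_A(\Omega_e))\big)$ stabilizes to $0$ for $i\neq 0$ as $e\to\infty$, together with the appropriate Mittag–Leffler behaviour in degree $0$.

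The second step is to translate the hypothesis. By Serre duality on $A$ (or by Grothendieck–Serre duality for the dualizing complex $D_A$), the vanishing $H^i(A,\Omega_e\otimes \hat L^\vee)=0$ for all $i>0$ and $e\gg 0$ is equivalent to a vanishing of $\Ext$-groups $\Ext^i_A(D_A(\Omega_e),\hat L^\vee\otimes\omega_A)$, hence — since $\hat L$ is a vector bundle, namely $R^0\hat S(L)$ — to a statement about the hypercohomology of $D_A(\Omega_e)$ twisted by (a dual of) $\hat L$. The point of the Fourier–Mukai philosophy, following Pareschi–Popa (Theorem \ref{main_PP11}) and Hacon–Patakfalvi, is that such a twist-by-$\hat L$ vanishing is exactly what detects whether $R\hat S(D_A(\Omega_e))$ is concentrated in degree $0$: one uses the base-change/projection formula computing $R\Gamma(\hat A, R\hat S(D_A(\Omega_e))\otimes L)$ in terms of $R\Gamma(A, D_A(\Omega_e)\otimes \hat L^\vee[\dim A])$ or a similar identity, and the fact that for $L$ sufficiently ample one can read off the cohomology sheaves of a complex on $\hat A$ from the vanishing of its cohomology after twisting by $L$. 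Concretely, I expect to invoke the equivalence (1)$\Leftrightarrow$(2) of Theorem \ref{main_PP11} applied not to a single sheaf but uniformly in $e$: the hypothesis gives condition (1) for each $e\gg0$, so condition (2) holds, i.e. $R^i\hat S(D_A(\Omega_e))=0$ for $i\neq 0$ and all $e\gg 0$.

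Once we know $R\hat S(D_A(\Omega_e))$ is a sheaf $\mathcal F_e$ in degree $0$ for $e\gg 0$, the inverse system $\{D_A(\Omega_e)\}$ pushes forward to an inverse system of sheaves $\{\mathcal F_e\}$ on $\hat A$ (with transition maps induced by those of $\{\Omega_e\}$), and $\Lambda=\hocolim \mathcal F_e$ computed in the derived category. The last step is then the general fact that the homotopy colimit of a sequence of honest sheaves $\mathcal F_e$ (indexed so that the maps go $\mathcal F_{e+1}\to\mathcal F_e$, i.e. really a homotopy \emph{limit} of the $\mathcal F_e$ viewed via the telescope) has cohomology concentrated in degrees $0$ and $1$, given by $\varprojlim \mathcal F_e$ and $\varprojlim^1 \mathcal F_e$; here one must check that $\varprojlim^1$ vanishes. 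This is where I expect the main obstacle to lie: the $\varprojlim^1$ term need not vanish for an arbitrary inverse system, so one must use more than the bare hypothesis — presumably the fact that the transition maps $\Omega_{e+1}\to\Omega_e$ are the ones coming from a Cartier-module/Frobenius structure (as in Hacon–Patakfalvi), which forces a Mittag–Leffler condition, or else one shows directly that $\hocolim$ of the $R\hat S(D_A(\Omega_e))$ can be rewritten so that the relevant $\varprojlim^1$ is over a system of finite-dimensional vector spaces after twisting by an ample line bundle, hence automatically Mittag–Leffler. I would handle this by twisting by a sufficiently ample $L$ on $\hat A$, passing to global sections where everything becomes finite-dimensional $k$-vector spaces (so $\varprojlim^1=0$), and then descending the vanishing of $\mathcal H^1(\Lambda)$ back down using that $\mathcal H^1(\Lambda)\otimes L$ is globally generated for $L$ ample enough — concluding $\mathcal H^1(\Lambda)=0$, and similarly $\mathcal H^i(\Lambda)=0$ for $i\neq 0$, which is the assertion $\Lambda=\mathcal H^0(\Lambda)$.
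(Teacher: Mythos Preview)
There are two genuine gaps in your proposal.

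First, you have the direction of the system wrong. Since $\{\Omega_e\}$ is an \emph{inverse} system, applying the contravariant functor $D_A$ yields a \emph{direct} system $\{D_A(\Omega_e)\}$, and hence $\{\Lambda_e=R\hat S(D_A(\Omega_e))\}$ is a direct system with maps $\Lambda_e\to\Lambda_{e+1}$. Consequently $\mathcal H^i(\Lambda)=\mathcal H^i(\hocolim\Lambda_e)\cong\varinjlim\mathcal H^i(\Lambda_e)$ is an ordinary filtered colimit, which is exact; there is no $\varprojlim^1$ term anywhere. Your entire final paragraph, where you worry about Mittag--Leffler for the $\mathcal F_e$ and try to kill a $\varprojlim^1$ by passing to finite-dimensional vector spaces, is addressing a nonexistent obstruction.

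Second, and more seriously, your reduction to Theorem~\ref{main_PP11} applied termwise does not go through. The hypothesis says that for each sufficiently ample $L$ the vanishing $H^i(A,\Omega_e\otimes\hat L^\vee)=0$ holds for $e\gg 0$, with the threshold on $e$ allowed to depend on $L$. It does \emph{not} say that for some fixed large $e$ the vanishing holds for all sufficiently ample $L$; so you cannot conclude that any individual $\Omega_e$ satisfies condition~(1) of Theorem~\ref{main_PP11}, and hence you cannot conclude that any $\Lambda_e$ is a sheaf. (If that stronger uniform statement held, then indeed $\mathcal H^i(\Lambda)=\varinjlim\mathcal H^i(\Lambda_e)=0$ for $i\neq0$ trivially, and the theorem would be immediate---but that is not what is assumed.)

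The actual argument, as carried out in \cite{HP} and reproduced in the $(2)\Rightarrow(3)$ step of Theorem~\ref{WIT=limKV} here, reverses the order of choices: one supposes $\mathcal H^j(\Lambda)\neq0$ for some minimal $j<0$, uses $\mathcal H^j(\Lambda)=\varinjlim\mathcal H^j(\Lambda_e)$ to fix a single $e$ for which $\mathcal H^j(\Lambda_e)\to\mathcal H^j(\Lambda)$ has nonzero image, and \emph{then} chooses $L$ ample enough (depending on this $e$) so that Serre vanishing degenerates the hypercohomology spectral sequence for $\Lambda_e\otimes L$. Dualizing identifies the resulting nonzero map $R^j\Gamma(\Lambda_e\otimes L)\to R^j\Gamma(\Lambda\otimes L)$ with the map $\varprojlim H^{-j}(A,\Omega_{e'}\otimes\hat L^\vee)\to H^{-j}(A,\Omega_e\otimes\hat L^\vee)$, which the hypothesis forces to be zero (since for this fixed $L$ the source vanishes once $e'$ is large). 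Your closing remarks about twisting by $L$ and using global generation are in the right spirit, but they need to be organized around this ``fix $e$ first, then $L$'' contradiction rather than around a $\varprojlim^1$ computation.
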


\begin{theorem}
If $\{\Omega_e\}$ is a Cartier module, then it satisfies the
condition in Theorem \ref{main_HP13}. In particular, let $X$ be a
normal, projective variety over an algebraically closed field $k$ of
characteristic $p>0$, then $\Omega_e=F^e_*S^0a_*\omega_X$ satisfies
the condition in Theorem \ref{main_HP13}.
\end{theorem}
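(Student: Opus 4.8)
The plan is to verify, directly, the cohomological hypothesis of Theorem~\ref{main_HP13} for the inverse system attached to a Cartier module $(M,\kappa\colon F_*M\to M)$, namely $\Omega_e=F^e_*M$ with transition maps $F^e_*\kappa$. A point worth stressing in advance is that the structure map $\kappa$ will play no role in this step: all that is used is that $\Omega_e$ is a Frobenius push-forward of a fixed coherent sheaf. First I would recall Mukai's computation of the Fourier--Mukai transform of an ample line bundle: for $L$ sufficiently ample on $\hat{A}$, the sheaf $\hat{L}=R\hat{S}(L)=R^0\hat{S}(L)$ is locally free of rank $h^0(\hat{A},L)$ on $A$, and if $\phi_L\colon\hat{A}\to A$ denotes the isogeny attached to $L$ then $\phi_L^*\hat{L}\cong H^0(\hat{A},L)\otimes_k L^{-1}$. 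Dualizing, $\phi_L^*(\hat{L}^\vee)$ is a direct sum of copies of the ample line bundle $L$, hence ample; since $\phi_L$ is finite and surjective, $\hat{L}^\vee$ is itself an ample vector bundle on $A$, and it becomes more positive as $L$ does. Next, using the projection formula together with the fact that the absolute Frobenius $F^e\colon A\to A$ is a finite morphism---hence exact on quasi-coherent sheaves and without higher direct images---one rewrites
\[
H^i(A,\Omega_e\otimes\hat{L}^\vee)=H^i\!\left(A,F^e_*\bigl(M\otimes F^{e*}\hat{L}^\vee\bigr)\right)=H^i(A,M\otimes F^{e*}\hat{L}^\vee).
\]
Thus the hypothesis of Theorem~\ref{main_HP13} for $\{\Omega_e\}$ is reduced to the assertion that for every ample vector bundle $E$ on $A$ and every coherent sheaf $M$ one has $H^i(A,M\otimes F^{e*}E)=0$ for all $i>0$ once $e\gg0$.

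This last assertion is the Frobenius-twisted form of Serre vanishing for ample vector bundles: since $E$ is ample the pull-backs $F^{e*}E$ become arbitrarily positive, and so they eventually kill all higher cohomology against the fixed sheaf $M$. When $E$ is a line bundle $N$ it is literally Serre vanishing, because the absolute Frobenius satisfies $F^{e*}N\cong N^{\otimes p^e}$; in general it is the statement that ample bundles have vanishing Frobenius amplitude, which I would cite rather than reprove. Granting it, $\{\Omega_e=F^e_*M\}$ satisfies the hypothesis of Theorem~\ref{main_HP13}, the threshold on $e$ depending only on $L$ and $M$---which is precisely the form demanded. For the geometric consequence it remains to produce the Cartier structure: if $X$ is normal and projective with Albanese morphism $a\colon X\to A$, then $\omega_X$ is the ($S_2$, reflexive) dualizing sheaf and the Grothendieck trace of the finite morphism $F_X$ furnishes an $\mathcal{O}_X$-linear Cartier operator $F_*\omega_X\to\omega_X$; pushing it forward along $a$ and using $a\circ F_X=F_A\circ a$ yields a Cartier structure $\kappa\colon F_*(a_*\omega_X)\to a_*\omega_X$ on the coherent sheaf $a_*\omega_X$. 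By construction $S^0a_*\omega_X$ is the (eventually stable) image of the iterates of this trace; it is a coherent subsheaf of $a_*\omega_X$ on which $\kappa$ restricts to a surjection, so $(S^0a_*\omega_X,\kappa)$ is a Cartier module, and $\Omega_e=F^e_*S^0a_*\omega_X$ falls under the first part.

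The single non-formal input, and the point where positive characteristic demands care, is the Frobenius-robustness of Serre vanishing invoked above: $F^{e*}$ does not simply raise a vector bundle to its $p^e$-th power, so one cannot reduce the rank $>1$ case to the line-bundle case by inspection. One can pass to the projective bundle $\mathbb{P}(\hat{L}^\vee)$, where $F^{e*}$ does send $\mathcal{O}(1)$ to $\mathcal{O}(p^e)$, but pushing the resulting vanishing back down to $A$ is not immediate, since the natural covering involved has $p$-power degree; the cleanest route is therefore to invoke the general fact that ample vector bundles have Frobenius amplitude zero. By contrast I anticipate no difficulty from the Cartier-module hypothesis itself---in this theorem its only function is to exhibit $\Omega_e$ as a Frobenius push-forward (and, in the geometric statement, to guarantee that $S^0a_*\omega_X$ is an honest coherent Cartier submodule of $a_*\omega_X$); the structure maps $\kappa$ enter the argument only later, in Theorem~\ref{main_HP13} itself and in the applications to generic vanishing.
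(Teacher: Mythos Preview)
The paper does not contain a proof of this statement: it is quoted in the introduction as a result of Hacon and Patakfalvi \cite{HP}, and the body of the paper neither reproves it nor sketches an argument. So there is no ``paper's own proof'' to compare against; what you have written is a standalone argument for a cited background result.

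On its merits your sketch is sound. The reduction via the projection formula to $H^i(A,M\otimes F^{e*}\hat{L}^\vee)$ is correct since the absolute Frobenius is affine, and Mukai's identification $\phi_L^*\hat{L}^\vee\cong L^{\oplus h^0(L)}$ does show that $\hat{L}^\vee$ is an ample vector bundle. The one substantive external input, as you correctly isolate, is the assertion that an ample vector bundle on a smooth projective variety in characteristic $p$ has Frobenius amplitude zero (Arapura); once that is granted, the vanishing $H^i(A,M\otimes F^{e*}\hat{L}^\vee)=0$ for $i>0$ and $e\gg0$ follows, with the threshold on $e$ depending on $M$ and $L$ as required. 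Your remark that the Cartier operator $\kappa$ itself plays no role in verifying the hypothesis of Theorem~\ref{main_HP13} is accurate: that hypothesis constrains each $\Omega_e$ separately, and only later results use the transition maps. The construction of the Cartier structure on $S^0a_*\omega_X$ is also standard and correctly described.

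If you want to compare with the original argument in \cite{HP}, note that one can sometimes avoid the Frobenius--amplitude machinery by exploiting the special geometry of abelian varieties: the interplay between Frobenius, Verschiebung, and the Fourier--Mukai transform (as in the paper's Proposition~\ref{FM_vs_pullback}) lets one rewrite $F^{e*}\hat{L}^\vee$ in terms of Fourier--Mukai transforms of genuinely more ample line bundles on $\hat{A}$, reducing to ordinary Serre vanishing. Your route through Arapura's theorem is more general (it would work for any Cartier module on any smooth projective variety mapping to $A$) at the cost of invoking a deeper black box; the abelian-variety-specific route is more self-contained but less portable.
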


One should regard Theorem \ref{main_HP13} as a generalization of
$(1)\Rightarrow (2)$ in Theorem \ref{main_PP11}. It is a natural
question to ask what is the appropriate generalization of the
statements for $(3)$ and $(4)$ in Theorem \ref{main_PP11} to the
positive characteristic setting and if all the resulting conditions
are equivalent to each other.

In this paper, we generalize Hacon and Patakfalvi's theorem as
follows.

\begin{theorem}\label{main}
(See Theorem \ref{WIT=limKV}, \ref{WITtoGV} and \ref{GVtolimKV}) Let
$A$ be an abelian variety. Let $\{\Omega_e\}$ be an inverse system
of coherent sheaves on $A$ satisfying the Mittag-Leffler condition
and let $\Omega=\varprojlim \Omega_e$. Let
$\Lambda_e=R\hat{S}(D_A(\Omega_e))$ and $\Lambda=\hocolim\Lambda_e$.
The following are equivalent:

\begin{enumerate}
\item[(1)] For any ample line bundle $L$ on $\hat{A}$, $H^i(A,
\Omega\otimes \hat{L}^\vee)=0$ for any $i>0$.
\item[(1')] For any fixed positive integer $e$ and any $i>0$, the
homomorphism $$H^i(A, \Omega\otimes \hat{L}^\vee) \to H^i(A,
\Omega_e\otimes \hat{L}^\vee)$$ is 0 for any sufficiently ample line
bundle $L$.
\item[(2)] $\mathcal{H}^i(\Lambda)=0$ for any $i\neq 0$.
\end{enumerate}

If any of these conditions is satisfied, then we will call
$\{\Omega_e\}$ a GV-inverse system of coherent sheaves.

These conditions imply the following equivalent conditions:

\begin{enumerate}
\item[(3)] For any scheme-theoretic point $P\in A$, if $\dim P>i$,
then $P$ is not in the support of $$\im (R^i\hat{S}(\Omega) \to
R^i\hat{S}(\Omega_e))$$ for any $e$.
\item[(3')] For any scheme-theoretic point $P\in A$, if $\dim P>i$,
then $P$ is not in the support of $$\im (\varprojlim
R^i\hat{S}(\Omega_e) \to R^i\hat{S}(\Omega_e))$$ for any $e$.
\end{enumerate}

If $\{R^i\hat{S}(\Omega_e)\}$ satisfies the Mittag-Leffler condition
for any $i\geq 0$, then (3) and (3') also imply (1), (1') and (2).
\end{theorem}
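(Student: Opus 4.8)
The plan is to run the cycle (1)$\Rightarrow$(1')$\Rightarrow$(2)$\Rightarrow$(1) for the first three conditions, then (2)$\Rightarrow$(3)$\Leftrightarrow$(3'), and finally (3)$\Rightarrow$(2) under the extra Mittag--Leffler hypothesis, after first translating everything into statements about the cohomology sheaves of the Fourier--Mukai transforms. Write $g=\dim A$ and $\Lambda_e=R\hat S(D_A(\Omega_e))$, so that, direct limits being exact, $\mathcal H^{i}(\Lambda)=\varinjlim_e\mathcal H^{i}(\Lambda_e)$ and (2) is the vanishing of this for $i\neq 0$. On the other side, since $\{\Omega_e\}$ is Mittag--Leffler one has $\Omega=R\varprojlim_e\Omega_e$ and hence $R\hat S(\Omega)=R\varprojlim_e R\hat S(\Omega_e)$; because inverse limits over $\mathbb N$ have cohomological dimension $1$ (checked on affine opens) this gives the Milnor exact sequence
\[
0\longrightarrow {\varprojlim_e}^{1}\, R^{i-1}\hat S(\Omega_e)\longrightarrow R^{i}\hat S(\Omega)\longrightarrow \varprojlim_e R^{i}\hat S(\Omega_e)\longrightarrow 0 .
\]
In particular $R^{i}\hat S(\Omega)\twoheadrightarrow\varprojlim_e R^{i}\hat S(\Omega_e)$, so $\im\!\big(R^{i}\hat S(\Omega)\to R^{i}\hat S(\Omega_e)\big)=\im\!\big(\varprojlim_{e'} R^{i}\hat S(\Omega_{e'})\to R^{i}\hat S(\Omega_e)\big)$, which is exactly the statement that (3) and (3') are the same condition. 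Finally I will use the standard Fourier--Mukai/Grothendieck-duality dictionary of \cite{PP11}: via the Mukai exchange isomorphism $R\hat S\circ D_A\simeq(-1_{\hat A})^{*}\circ D_{\hat A}\circ R\hat S$ (with the appropriate shift), the cohomology sheaves $\mathcal H^{i}(\Lambda_e)$ are assembled by a spectral sequence from $\mathcal{E}xt^{\,\bullet}_{\hat A}\big(R^{\bullet}\hat S(\Omega_e),\mathcal O_{\hat A}\big)$, and for $L$ sufficiently ample the space $H^{i}(A,\mathcal F\otimes\hat L^{\vee})$ is, by Serre vanishing, the global sections of a very positive twist of $\mathcal H^{i}(R\hat S(D_A\mathcal F))$; hence $H^{i}(A,\mathcal F\otimes\hat L^{\vee})=0$ for all $i>0$ and all large $L$ is equivalent to $\mathcal H^{i}(R\hat S(D_A\mathcal F))=0$ for $i\neq 0$.

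For (1)$\Leftrightarrow$(1')$\Leftrightarrow$(2): the implication (1)$\Rightarrow$(1') is trivial, since then the source of the map already vanishes. For (1')$\Rightarrow$(2): tensoring the system $\{\Omega_e\}$ by the locally free $\hat L^{\vee}$ and using that each $H^{j}(A,\Omega_e\otimes\hat L^{\vee})$ is finite-dimensional (so the inverse system of these is automatically Mittag--Leffler and its $\varprojlim^{1}$ vanishes) gives $H^{i}(A,\Omega\otimes\hat L^{\vee})=\varprojlim_e H^{i}(A,\Omega_e\otimes\hat L^{\vee})$; condition (1') says precisely that, for $L$ large relative to $e$, the structure map of this inverse limit into its $e$-th term vanishes, and running the argument of Theorem \ref{main_HP13} --- which extracts the degree-$0$ concentration of $\Lambda$ from exactly such vanishing after passing to the colimit --- yields (2). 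For (2)$\Rightarrow$(1): this is the converse half of the dictionary above, read off from $\mathcal H^{i}(\Lambda)=\varinjlim_e\mathcal H^{i}(\Lambda_e)=0$ for $i\neq 0$; note that it produces (1) for \emph{every} ample $L$, not merely for sufficiently ample ones. This closes the cycle.

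The substantive implication is (2)$\Rightarrow$(3) together with its conditional converse. For (2)$\Rightarrow$(3): by the exchange formula and the duality spectral sequence, $\mathcal H^{-i}(\Lambda_e)$ is built from sheaves of the form $\mathcal{E}xt^{\,g-j}_{\hat A}\big(R^{j-i}\hat S(\Omega_e),\mathcal O_{\hat A}\big)$, and the standard codimension estimates ($\mathcal{E}xt^{k}(\mathcal M,\mathcal O_{\hat A})$ is supported in codimension $\geq k$) show, exactly as in Pareschi--Popa's proof that the GV condition is equivalent to the WIT-condition on $D_A\mathcal F$, that the failure of $R^{i}\hat S(\Omega_e)$ to satisfy the support-dimension bound of (3) at a scheme-theoretic point $P$ forces $\mathcal H^{j}(\Lambda_e)\neq 0$ at $P$ for some $j\neq 0$. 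Passing to $\varinjlim_e$ and invoking (2) to kill those $\mathcal H^{j}(\Lambda)$, and using the identification of $\im(R^{i}\hat S(\Omega)\to R^{i}\hat S(\Omega_e))$ above, gives (3) (hence (3')). For the converse, assume moreover that each $\{R^{i}\hat S(\Omega_e)\}_e$ is Mittag--Leffler: then $\varprojlim^{1}$ vanishes in the Milnor sequence, so $R^{i}\hat S(\Omega)=\varprojlim_e R^{i}\hat S(\Omega_e)$ and (3) becomes an outright support-dimension bound on $R^{i}\hat S(\Omega)$; feeding this into the same duality spectral sequence, now applied to $R\hat S(D_A\Omega)=R\varprojlim_e\Lambda_e$ (whose cohomology sheaves are controlled because $\varprojlim^{s}=0$ for $s\geq 2$), yields $\mathcal H^{i}(R\hat S(D_A\Omega))=0$ for $i\neq 0$, and then $\mathcal H^{i}(\Lambda)=\varinjlim_e\mathcal H^{i}(\Lambda_e)=0$ for $i\neq 0$, i.e.\ (2).

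The main obstacle will be the bookkeeping in (2)$\Rightarrow$(3) and its converse: the direct limit appearing in (2) is genuinely weaker than level-by-level concentration of the $\Lambda_e$ in degree $0$, so one must check that the Pareschi--Popa codimension argument, naturally a statement about a single transform, survives passage to both $\varinjlim_e$ and $\varprojlim_e$; and in the absence of the second Mittag--Leffler hypothesis one only controls the \emph{images} $\im(R^{i}\hat S(\Omega)\to R^{i}\hat S(\Omega_e))$ rather than $R^{i}\hat S(\Omega)$ itself, so tracking exactly which cohomology sheaves detect which scheme-theoretic points under the spectral sequence is the delicate step. A secondary technical point to pin down is the validity of the Milnor exact sequence and of $R\hat S(R\varprojlim_e\Omega_e)=R\varprojlim_e R\hat S(\Omega_e)$ in the relevant category of (quasi-)coherent sheaves on $\hat A$ --- this is precisely where the hypothesis that $\{\Omega_e\}$ is itself Mittag--Leffler (so that $\varprojlim_e\Omega_e=R\varprojlim_e\Omega_e$) enters.
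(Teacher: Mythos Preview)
Your cycle (1)$\Rightarrow$(1')$\Rightarrow$(2)$\Rightarrow$(1) and the equivalence (3)$\Leftrightarrow$(3') via the Milnor sequence are fine and match the paper. Two of the substantive steps, however, are not set up correctly.

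\textbf{(2)$\Rightarrow$(3).} Your contrapositive framing (``failure of the support bound at $P$ forces $\mathcal H^j(\Lambda_e)_P\neq 0$ for some $j\neq 0$, then pass to the colimit'') does not work: condition (2) says only that the \emph{colimit} $\mathcal H^j(\Lambda)$ vanishes, and nothing prevents $\mathcal H^j(\Lambda_e)_P\neq 0$ at every finite level. The paper argues instead with the \emph{morphism} of spectral sequences
\[
\mathcal{E}xt^i(\mathcal H^{j}(\Lambda),\mathcal O_{\hat A})_P\ \Longrightarrow\ \mathcal{E}xt^{i-j}(\Lambda,\mathcal O_{\hat A})_P
\quad\longrightarrow\quad
\mathcal{E}xt^i(\mathcal H^{j}(\Lambda_e),\mathcal O_{\hat A})_P\ \Longrightarrow\ \mathcal{E}xt^{i-j}(\Lambda_e,\mathcal O_{\hat A})_P
\]
and proves an abstract lemma (Lemma~\ref{spec_seq_zero_map}): if on the antidiagonal $i+(-j)=l$ the source $E_2$-page vanishes for $i<a$ and the target $E_2$-page vanishes for $i>a$ and $\phi_2^{a,l-a}=0$, then the induced map $\phi^l$ on abutments is zero. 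Here the source side vanishes because $\mathcal H^j(\Lambda)=0$ for $j<0$ (that is (2)), and the target side vanishes because $\mathcal{E}xt^i(-,\mathcal O_{\hat A})_P=0$ for $i>g-\dim P$; so $\mathcal{E}xt^l(\Lambda,\mathcal O_{\hat A})_P\to\mathcal{E}xt^l(\Lambda_e,\mathcal O_{\hat A})_P$ is zero for the relevant $l$. One then identifies $\mathcal{E}xt^l(\Lambda_e,\mathcal O_{\hat A})\cong(-1)^*R^l\hat S(\Omega_e)$ and factors $R^l\hat S(\Omega)\to R^l\hat S(\Omega_e)$ through $\mathcal H^l(\holim R\hat S(\Omega_e))$ to conclude. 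The key point you are missing is this zero-map lemma for morphisms of spectral sequences.

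\textbf{(3)$\Rightarrow$(1),(2) under the extra ML hypothesis.} First, a slip: under ML on $\{\Omega_e\}$ one has $D_A(\Omega)=D_A(\holim\Omega_e)=\hocolim D_A(\Omega_e)$, hence $R\hat S(D_A\Omega)=\hocolim\Lambda_e=\Lambda$, not $R\!\varprojlim_e\Lambda_e$. More seriously, feeding the support bound on $R^q\hat S(\Omega)$ into an $\mathcal{E}xt$-sheaf spectral sequence is problematic: $R^q\hat S(\Omega)=\varprojlim_e R^q\hat S(\Omega_e)$ is only quasi-coherent, and while its support is contained in the set of points of small dimension, that set need not be closed (cf.\ Example~\ref{exmpl_HP}), so the usual $\mathcal{E}xt^p$--codimension estimates, which rely on coherence, are not available. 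The paper sidesteps this by proving (3)$\Rightarrow$(1) without $\mathcal{E}xt$: for $p+q>g$ the map
\[
H^p(\hat A,R^q\hat S(\Omega)\otimes L^\vee)\longrightarrow H^p(\hat A,R^q\hat S(\Omega_e)\otimes L^\vee)
\]
factors through $H^p(\hat A,\mathrm{im}_{q,e}\otimes L^\vee)$, where $\mathrm{im}_{q,e}\subset R^q\hat S(\Omega_e)$ is \emph{coherent} with support of dimension $\le g-q<p$, hence this group is zero. The extra ML hypothesis then gives $H^p(\hat A,R^q\hat S(\Omega)\otimes L^\vee)\cong\varprojlim_e H^p(\hat A,R^q\hat S(\Omega_e)\otimes L^\vee)$, and an inverse limit whose structure maps are all zero is zero. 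Now the hypercohomology spectral sequence kills $H^l(\hat A,R\hat S(\Omega)\otimes L^\vee)$ for $l>g$, and Proposition~\ref{FM_vs_projformula} translates this to $H^{l-g}(A,\Omega\otimes\hat L^\vee)=0$. This is a genuinely different route from the one you sketched, and it is where the extra ML hypothesis is actually used.
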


We should make a remark that even if $\{\Omega_e\}$ is a Cartier
module, $\{R^i\hat{S}(\Omega_e)\}$ does not necessarily satisfy the
Mittag-Leffler condition (see Example \ref{notML_Car}). We are
unable to prove the equivalence in this case. On the other hand, the
statement about $V^i(\Omega)$ is still missing. We will give an
example (see Example \ref{chain_failure}) where the chain of
inclusions for $V^i(\Omega)$ fails. Since the support of $\im
(R^i\hat{S}(\Omega) \to R^i\hat{S}(\Omega_e))$ is usually not closed
(see Example \ref{notML_notCar}), it is not a good idea to talk
about its codimension.

In a sequence of papers \cite{PPI,PPII,PPIII}, Pareschi and Popa
introduced the notion of M-regularity which parallels and
strengthens the usual Castelnuovo-Mumford regularity with respect to
polarizations on abelian varieties and developed several results on
global generation. In \cite{PPIII}, the following characterization
of M-regularity is given.

\begin{theorem}
Let $A$ be an abelian variety and $\mathcal{F}$ be a coherent sheaf
on $A$ satisfying the Generic Vanishing conditions. The following
conditions are equivalent:
\begin{enumerate}
\item $\mathcal{F}$ is M-regular, i.e., $R^0\hat{S}(D_A(\mathcal{F}))$ is torsion-free.
\item $\codim \Supp R^i\hat{S}(\mathcal{F})> i$ for any $i\geqslant 0$
\item $\codim V^i(\mathcal{F})>i$ for any $i\geqslant 0$.
\end{enumerate}
\end{theorem}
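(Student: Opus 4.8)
The plan is to move all three conditions onto the single coherent sheaf $\mathcal{G}:=R^0\hat{S}(D_A(\mathcal{F}))$ on $\hat{A}$. Since $\mathcal{F}$ satisfies the Generic Vanishing conditions, Theorem~\ref{main_PP11} gives $R^i\hat{S}(D_A(\mathcal{F}))=0$ for $i\neq 0$ — so $R\hat{S}(D_A(\mathcal{F}))=\mathcal{G}$ is a genuine sheaf concentrated in degree $0$ — and $\codim\Supp R^i\hat{S}(\mathcal{F})\geqslant i$ for all $i$. The engine of the argument is the duality identity $R^i\hat{S}(\mathcal{F})\cong(-1_{\hat{A}})^*\,\mathcal{E}xt^i_{\mathcal{O}_{\hat{A}}}(\mathcal{G},\mathcal{O}_{\hat{A}})$ for every $i\geqslant 0$. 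I would derive it from the Fourier--Mukai form of Grothendieck--Serre duality (following Mukai and Pareschi--Popa): applying $R\hat{S}$ again to $\mathcal{G}=R\hat{S}(D_A(\mathcal{F}))$ recovers $(-1_A)^*D_A(\mathcal{F})$ up to the shift $[-g]$ with $g=\dim A$ (this uses that $R\hat{S}(D_A(\mathcal{F}))$ is a sheaf in degree $0$), and then applying $D_A$, invoking the duality exchange $D_A\circ R\hat{S}\cong(-1)^*\circ R\hat{S}\circ D_{\hat{A}}$ (up to shift), $R\hat{S}\circ R\hat{S}\cong(-1)^*[-g]$, and the intertwining $R\hat{S}\circ(-1)^*\cong(-1)^*\circ R\hat{S}$, one finds that all the $\pm g$ shifts cancel — the dualising complex of $\hat{A}$ being $\mathcal{O}_{\hat{A}}[g]$ since $\omega_{\hat{A}}\cong\mathcal{O}_{\hat{A}}$, which is precisely what makes $\mathcal{E}xt^i$ rather than a shifted variant appear — so that $R\hat{S}(\mathcal{F})\cong(-1_{\hat{A}})^*R\mathcal{H}om_{\mathcal{O}_{\hat{A}}}(\mathcal{G},\mathcal{O}_{\hat{A}})$; passing to cohomology sheaves gives the identity. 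The normalisation is confirmed on $\mathcal{F}=\mathcal{O}_A$, where both sides reduce to the skyscraper at the origin in cohomological degree $g$.

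Granting the identity, $(1)\Leftrightarrow(2)$ becomes a purely homological statement about $\mathcal{G}$ on the smooth $g$-dimensional variety $\hat{A}$: $\mathcal{G}$ is torsion-free if and only if $\codim\Supp\mathcal{E}xt^i_{\mathcal{O}_{\hat{A}}}(\mathcal{G},\mathcal{O}_{\hat{A}})>i$ for all $i\geqslant 1$, which is condition $(2)$ transported through the identity. I would prove this criterion from the biduality spectral sequence $E_2^{p,q}=\mathcal{E}xt^p(\mathcal{E}xt^q(\mathcal{G},\mathcal{O}_{\hat{A}}),\mathcal{O}_{\hat{A}})\Rightarrow\mathcal{G}$ (abutting in total degree $q-p$), together with the standard facts $\mathcal{E}xt^j(\mathcal{H},\mathcal{O}_{\hat{A}})=0$ for $j<\codim\Supp\mathcal{H}$ and $\codim\Supp\mathcal{E}xt^j(\mathcal{H},\mathcal{O}_{\hat{A}})\geqslant j$: under the codimension hypotheses the off-diagonal $E_2$-terms do not interfere in the degrees governing the edge map, so $\mathcal{G}\to\mathcal{G}^{\vee\vee}$ is injective and $\mathcal{G}$ is torsion-free; conversely a torsion-free sheaf is of full support and satisfies $S_1$, and the depth characterisation of $S_1$ (equivalently, running the same spectral sequence) yields the codimension inequalities. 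Substituting $\mathcal{E}xt^i(\mathcal{G},\mathcal{O}_{\hat{A}})\cong(-1_{\hat{A}})^*R^i\hat{S}(\mathcal{F})$ then turns this into $(1)\Leftrightarrow(2)$.

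Finally, $(2)\Leftrightarrow(3)$ is the strict-inequality refinement of the equivalence $(3)\Leftrightarrow(4)$ of Theorem~\ref{main_PP11}, and I would prove it by the same cohomology-and-base-change argument. For a GV-sheaf $\mathcal{F}$ one always has $\Supp R^i\hat{S}(\mathcal{F})\subseteq V^i(\mathcal{F})$, and the Generic Vanishing hypothesis forces the base-change maps $R^i\hat{S}(\mathcal{F})\otimes k(P)\to H^i(A,\mathcal{F}\otimes P)$ to be isomorphisms on a dense open subset of each irreducible component of $V^i(\mathcal{F})$; hence $V^i(\mathcal{F})$ and $\Supp R^i\hat{S}(\mathcal{F})$ have the same codimension for each $i$. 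Replacing $\geqslant i$ by $>i$ throughout, $(2)$ holds if and only if $(3)$ does.

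The step I expect to be the real obstacle is the duality identity of the first paragraph: getting the Fourier--Mukai version of Grothendieck duality normalised so that $R^i\hat{S}(\mathcal{F})$ is matched with $\mathcal{E}xt^i(\mathcal{G},\mathcal{O}_{\hat{A}})$ rather than with $\mathcal{E}xt^{i+g}$ or a shift thereof — the bookkeeping of the several $(-1)^*$'s and $\pm g$ shifts is what one must get right. A close second is pinning the homological torsion-free criterion in exactly the right form: the diagonal term $\mathcal{E}xt^c(\mathcal{G},\mathcal{O}_{\hat{A}})$ with $c:=\codim\Supp\mathcal{G}$ always has support of codimension exactly $c$, so one has to carry the full-support hypothesis on $\mathcal{G}$ separately and read the codimension condition for $i\geqslant 1$. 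By contrast, $(2)\Leftrightarrow(3)$ is essentially a reprise of the base-change analysis already behind Theorem~\ref{main_PP11}.
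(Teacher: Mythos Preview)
This theorem is not proved in the present paper: it is quoted from \cite{PPIII} as background, and no proof is supplied here. So there is no ``paper's own proof'' to compare against directly.

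That said, your outline is correct and is precisely the argument of \cite{PPIII}; moreover, the paper itself reuses exactly these ingredients when proving its generalisation, Theorem~\ref{WITtoGV}. The duality identity you highlight, $R^l\hat{S}(\Omega_e)\cong(-1_{\hat{A}})^*\mathcal{E}xt^l(\Lambda_e,\mathcal{O}_{\hat{A}})$, is derived verbatim in that proof from Proposition~\ref{FM_vs_dual} and Proposition~\ref{FM_inverse}, with the same shift bookkeeping you describe. The torsion-free $\Leftrightarrow$ $\mathcal{E}xt$-codimension criterion is invoked there by citing \cite[Lemma~2.9]{PPIII}, which is exactly the biduality spectral-sequence argument you sketch. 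And $(2)\Leftrightarrow(3)$ is, as you say, the strict-inequality variant of the base-change comparison behind Theorem~\ref{main_PP11}. Your identification of the normalisation of the duality identity as the delicate point is apt; the paper handles it by unwinding $D_{\hat{A}}\circ R\hat{S}\circ D_A$ explicitly, just as you propose.
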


We will generalize the theorem above to inverse systems as follows.

\begin{theorem}
(See Theorem \ref{WITtoGV}) Let $A$ be an abelian variety and
$\{\Omega_e\}$ be a GV-inverse system of coherent sheaves on $A$
such that
\begin{enumerate}
\item $\{\Omega_e\}$ is M-regular in the sense that $\mathcal{H}^0(\Lambda)$ is torsion-free.
\end{enumerate}
Then
\begin{enumerate}
\item[(2)] for any scheme-theoretic point $P\in A$, if $\dim P\geqslant
i$, then $P$ is not in the support of $$\im (R^i\hat{S}(\Omega) \to
R^i\hat{S}(\Omega_e))$$ for any $e$.
\end{enumerate}
\end{theorem}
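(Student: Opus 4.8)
The plan is to follow the proof of the implication from the Generic Vanishing conditions to~(3) in Theorem~\ref{WITtoGV} and sharpen it using torsion-freeness: in that argument one only exploits that $\mathcal{E}xt$-sheaves of a coherent sheaf on the smooth variety $\hat A$ have support of codimension at least their index, whereas for a \emph{torsion-free} coherent sheaf $M$ one has the sharper bound $\codim\Supp\mathcal{E}xt^{j}(M,\mathcal{O}_{\hat A})\ge j+1$ for all $j\ge 1$, and gaining this one unit of codimension is exactly the passage from~(3) to~(2).

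Set $g=\dim A$. The first step is the duality dictionary. Mukai's exchange formula together with Grothendieck duality on an abelian variety yields, for $\mathcal{G}\in D^b_{\mathrm{coh}}(A)$, an isomorphism $R\hat S(D_A(\mathcal{G}))\cong (-1_{\hat A})^{*}D_{\hat A}(R\hat S(\mathcal{G}))[-g]$; taking $\mathcal{G}=\Omega_e$ (so that $R\hat S(\Omega_e)\in D^b_{\mathrm{coh}}(\hat A)$ and biduality applies) and dualizing back gives $D_{\hat A}(\Lambda_e)\cong (-1_{\hat A})^{*}R\hat S(\Omega_e)[g]$. Since $D_{\hat A}=R\mathcal{H}om(-,\mathcal{O}_{\hat A})[g]$ carries homotopy colimits to homotopy limits, $D_{\hat A}(\Lambda)\cong\holim_e D_{\hat A}(\Lambda_e)\cong (-1_{\hat A})^{*}\bigl(\holim_e R\hat S(\Omega_e)\bigr)[g]$. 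The Mittag--Leffler hypothesis on $\{\Omega_e\}$ enters here: it guarantees $\varprojlim^{1}\Omega_e=0$ and allows $R\hat S$ to be interchanged with the homotopy limit of the tower, whence $\holim_e R\hat S(\Omega_e)\cong R\hat S(\Omega)$. As $\{\Omega_e\}$ is a GV-inverse system, $\Lambda=\mathcal{H}^0(\Lambda)$ is a quasi-coherent sheaf; passing to cohomology then identifies, up to re-indexing by the shift, $R^{i}\hat S(\Omega)\cong (-1_{\hat A})^{*}\mathcal{E}xt^{i}\bigl(\mathcal{H}^0(\Lambda),\mathcal{O}_{\hat A}\bigr)$, paralleling the classical single-sheaf formula.

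The second step inserts the M-regularity hypothesis~(1). One proves the estimate above (at a point $x$ of $\Supp\mathcal{E}xt^{j}(M,\mathcal{O}_{\hat A})$ one has $\operatorname{pd}_xM\ge j$, so $\operatorname{depth}_xM\le\dim\mathcal{O}_{\hat A,x}-j$ by Auslander--Buchsbaum, while $\operatorname{depth}_xM\ge\min(1,\dim\mathcal{O}_{\hat A,x})$ by torsion-freeness, forcing $\dim\mathcal{O}_{\hat A,x}\ge j+1$) and applies it, $\mathcal{H}^0(\Lambda)$ being torsion-free by hypothesis. Combined with the identification of the previous step this should say that, for each $i$, the support of $R^{i}\hat S(\Omega)$ — hence, a fortiori, that of its coherent quotient $\im(R^{i}\hat S(\Omega)\to R^{i}\hat S(\Omega_e))\subseteq R^{i}\hat S(\Omega_e)$ — contains no scheme-theoretic point $P$ with $\dim P\ge i$, which is exactly conclusion~(2).

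The main obstacle, and the reason the conclusion is phrased for the coherent image $\im(R^{i}\hat S(\Omega)\to R^{i}\hat S(\Omega_e))$ rather than for $R^{i}\hat S(\Omega)$ itself, is that $\mathcal{H}^0(\Lambda)=\varinjlim_e\mathcal{H}^0(\Lambda_e)$ is merely quasi-coherent, and the commutative-algebra estimate is false for arbitrary torsion-free quasi-coherent sheaves (already $\mathcal{E}xt^{1}$ of the function field of $\hat A$ has $1$-codimensional support). One must therefore not compute $\mathcal{E}xt^{i}(\mathcal{H}^0(\Lambda),\mathcal{O}_{\hat A})$ globally but descend to finite level: replace $\mathcal{H}^0(\Lambda)$ by the coherent subsheaves $\bar M_e:=\im\bigl(\mathcal{H}^0(\Lambda_e)\to\mathcal{H}^0(\Lambda)\bigr)$, which are torsion-free (being subsheaves of $\mathcal{H}^0(\Lambda)$) and satisfy $\bar M_e\subseteq\bar M_{e+1}$ with $\bigcup_e\bar M_e=\mathcal{H}^0(\Lambda)$, and exploit the Milnor-type exact sequence relating $\mathcal{E}xt^{i}(\mathcal{H}^0(\Lambda),\mathcal{O}_{\hat A})$ to $\varprojlim_e$ and $\varprojlim^{1}_e$ of the $\mathcal{E}xt^{\bullet}(\bar M_e,\mathcal{O}_{\hat A})$, to each of which the torsion-free estimate applies. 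The delicate point is that the $\varprojlim^{1}_e$ term can enlarge the support, since the codimension-$i$ loci $\Supp\mathcal{E}xt^{i-1}(\bar M_e,\mathcal{O}_{\hat A})$ need not lie in a common closed set as $e$ varies; this is circumvented by working inside the coherent sheaf $R^{i}\hat S(\Omega_e)$ — which has closed support — and using that, by the Mittag--Leffler property of $\{\Omega_e\}$, the map $R^{i}\hat S(\Omega)\to R^{i}\hat S(\Omega_e)$ factors through $\varprojlim_f R^{i}\hat S(\Omega_f)$ and its image is already detected by finitely many $\bar M_f$. Carrying out this finitary descent carefully yields~(2); the opposite implication is not claimed here precisely because it would require the Mittag--Leffler property for $\{R^{i}\hat S(\Omega_e)\}$, which may fail.
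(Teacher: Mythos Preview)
Your strategy is right—use torsion-freeness to gain one unit in the codimension estimate, via the spectral-sequence comparison of Theorem~\ref{WITtoGV}—and you correctly isolate the obstacle: the bound $\codim\Supp\mathcal{E}xt^{j}(M,\mathcal{O}_{\hat A})\ge j+1$ holds only for \emph{coherent} torsion-free $M$, whereas $\mathcal{H}^0(\Lambda)$ is merely quasi-coherent. But your third paragraph does not close the gap. The Milnor sequence for $\mathcal{E}xt^{i}(\varinjlim\bar M_e,\mathcal{O}_{\hat A})$ really does pick up an $R^1\varprojlim$ term whose support you cannot bound, and the assertion that the image ``is already detected by finitely many $\bar M_f$'' is not substantiated: the factorisation through $\varprojlim_f R^i\hat S(\Omega_f)$ gives no such finiteness when $\{R^i\hat S(\Omega_e)\}$ fails Mittag--Leffler, which is precisely the regime of interest. (A smaller issue: your claim $\holim_e R\hat S(\Omega_e)\cong R\hat S(\Omega)$ is not justified, since $R\hat S$ involves a pullback and need not commute with products; fortunately only the factorisation of the map through the holim is needed, and that always holds.)

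The fix is far simpler than your Milnor-sequence detour, and in fact your own sheaf $\bar M_e=\im(\mathcal{H}^0(\Lambda_e)\to\mathcal{H}^0(\Lambda))$ already does it. The map $\mathcal{H}^0(\Lambda_e)\to\mathcal{H}^0(\Lambda)$ factors as $\mathcal{H}^0(\Lambda_e)\twoheadrightarrow\bar M_e\hookrightarrow\mathcal{H}^0(\Lambda)$, so after applying $\mathcal{E}xt^{g-d}(-,\mathcal{O}_{\hat A})_P$ the map in question factors through $\mathcal{E}xt^{g-d}(\bar M_e,\mathcal{O}_{\hat A})_P$; since $\bar M_e$ is coherent (image of a coherent sheaf on a noetherian scheme) and torsion-free (a subsheaf of $\mathcal{H}^0(\Lambda)$), this group vanishes, and Lemma~\ref{spec_seq_zero_map} applies directly. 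The paper reaches the same vanishing of $\phi_2^{g-d,0}$ by a dual manoeuvre: it decomposes the \emph{source} side $0\to\mathcal{T}\to\mathcal{H}^0(\Lambda_e)\to\mathcal{F}\to 0$ with $\mathcal{T}$ the torsion subsheaf, applies the coherent torsion-free estimate to $\mathcal{F}$ to get $\mathcal{E}xt^{g-d}(\mathcal{H}^0(\Lambda_e),\mathcal{O}_{\hat A})_P\cong\mathcal{E}xt^{g-d}(\mathcal{T},\mathcal{O}_{\hat A})_P$, and then observes that the composite $\mathcal{T}\to\mathcal{H}^0(\Lambda_e)\to\mathcal{H}^0(\Lambda)$ is zero because a torsion sheaf maps trivially into a torsion-free one. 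Either route avoids the quasi-coherence issue by applying the estimate only to a coherent sheaf.
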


We are unable to prove the converse statement of the above theorem.

\subsection*{Acknowledgement} The authors would give special thank
to their advisor Christopher Hacon for suggesting this problem and
sharing an early draft of \cite{HP}. We are also in debt to Zsolt
Patakfalvi for useful discussions, especially for pointing out that
Example \ref{exmpl_HP} and Example \ref{chain_failure} are the same
in the supersingular case.

\section{Preliminaries}

We work over a perfect field $k$ of arbitrary characteristic.

\subsection{Derived category}\label{sec_derived_category}
We recall some basic notations in derived category. For details, we
refer to \cite[Section 2.1]{HP} and \cite{N96}.

Given a variety $X$, let $D(X)$ be the derived category of
$\mathcal{O}_X$-modules and $D_{qc}(X)$ (resp. $D_c(X)$) be the full
subcategory consisting of bounded complex whose cohomologies are
quasi-coherent (resp. coherent). For any object $\mathcal{E}\in
D_{qc}(X)$, $\mathcal{E}[n]$ denotes the object obtained by shifting
$\mathcal{E}$, $n$ places to the left, and
$\mathcal{H}^n(\mathcal{E})$ denotes the $n$-th homology of a
complex representing $\mathcal{E}$.

Let $X$ be a variety of dimension $n$ and
$\omega_X^\bullet=p^!\mathcal{O}_k$ denote its dualizing complex
such that $\mathcal{H}^{-n}(\omega_X^\bullet)\cong\omega_X$. The
dualizing functor $D_X$ is defined by
$D_X(\mathcal{E})=R\mathcal{H}om(\mathcal{E},\omega_X^\bullet)$ for
any $\mathcal{E}\in D_{qc}(X)$. We have Grothendieck Duality:

\begin{theorem}[Grothendieck Duality]\label{GroDuality}
Let $f:X\to Y$ be a proper morphism of quasi-projective varieties
over a field $k$. Then
$$Rf_*(D_X(\mathcal{E}))=D_Y(Rf_*(\mathcal{E}))$$ for any
$\mathcal{E}\in D_{qc}(X)$.
\end{theorem}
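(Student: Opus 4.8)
The plan is to deduce the statement from the standard package of Grothendieck--Serre duality, namely the existence of a twisted inverse image functor together with the sheafified duality isomorphism, both available for separated finite-type $k$-schemes by \cite{N96} (see also \cite[Section 2.1]{HP}). First I would recall that, since $f$ is proper, the functor $Rf_*$ admits a right adjoint $f^!$, and that this adjunction refines to a natural isomorphism
$$Rf_*\, R\mathcal{H}om_X(\mathcal{E}, f^!\mathcal{G}) \;\xrightarrow{\ \sim\ }\; R\mathcal{H}om_Y(Rf_*\mathcal{E}, \mathcal{G}),$$
functorial in $\mathcal{E}\in D_{qc}(X)$ and $\mathcal{G}\in D_{qc}(Y)$. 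I would also record the pseudofunctoriality of the twisted inverse image: if $g\colon Y\to Z$ is a second morphism of the same type, then there is a canonical isomorphism $(g\circ f)^!\cong f^!\circ g^!$.

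Second, I would identify the dualizing complexes in terms of this functor. By definition $\omega_X^\bullet=p_X^!\mathcal{O}_k$ and $\omega_Y^\bullet=p_Y^!\mathcal{O}_k$, where $p_X\colon X\to\operatorname{Spec}k$ and $p_Y\colon Y\to\operatorname{Spec}k$ are the structure morphisms; since $p_X=p_Y\circ f$, the composition compatibility yields $\omega_X^\bullet\cong f^!p_Y^!\mathcal{O}_k\cong f^!\omega_Y^\bullet$ (this uses that $X$ and $Y$, being quasi-projective over the field $k$, admit dualizing complexes with coherent cohomology). Now I would simply substitute $\mathcal{G}=\omega_Y^\bullet$ into the displayed duality isomorphism: the left-hand side becomes $Rf_*\,R\mathcal{H}om_X(\mathcal{E},f^!\omega_Y^\bullet)\cong Rf_*\,R\mathcal{H}om_X(\mathcal{E},\omega_X^\bullet)=Rf_*(D_X(\mathcal{E}))$, and the right-hand side becomes $R\mathcal{H}om_Y(Rf_*\mathcal{E},\omega_Y^\bullet)=D_Y(Rf_*\mathcal{E})$, and naturality in $\mathcal{E}$ gives the asserted identification as functors on $D_{qc}(X)$.

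The remaining work is bookkeeping about finiteness: I would note that properness of $f$ implies $Rf_*$ sends $D_c(X)$ to $D_c(Y)$, so both sides land in the bounded coherent subcategory where $D_X$ and $D_Y$ are honest (involutive) dualities, and where all the isomorphisms above are the ones intended. The content is thus a reduction rather than a new argument; the genuinely nontrivial inputs being quoted are the construction of $f^!$ and the sheafy duality isomorphism for proper morphisms, plus the composition law $(g\circ f)^!\cong f^!\circ g^!$. Accordingly, I expect the only real point of care — rather than a true obstacle — to be making sure the hypotheses (quasi-projective over $k$, proper $f$) are exactly what is needed to invoke those statements, and that the dualizing complex normalization $\mathcal{H}^{-n}(\omega_X^\bullet)\cong\omega_X$ is the one compatible with $\omega_X^\bullet=p_X^!\mathcal{O}_k$.
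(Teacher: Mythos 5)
The paper does not actually prove this statement; it is recalled as a black box, with the burden placed on the cited references (in particular \cite{N96} and \cite[Section 2.1]{HP}). Your derivation is the standard one those references develop: use Brown representability to get the right adjoint $f^!$ of $Rf_*$ for proper $f$, invoke the sheafified duality isomorphism, exploit pseudofunctoriality $(g\circ f)^!\cong f^!\circ g^!$ to identify $\omega_X^\bullet\cong f^!\omega_Y^\bullet$, and specialize $\mathcal{G}=\omega_Y^\bullet$. That is essentially the intended argument, so I would count this as correct and matching the paper's (implicit) approach.

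One point you should be explicit about, though it does not break the argument: since $X$ and $Y$ are only quasi-projective (not proper) over $k$, the structure morphisms $p_X,p_Y$ are not proper, so the $p_X^!,p_Y^!$ defining $\omega_X^\bullet$ and $\omega_Y^\bullet$ are not right adjoints of pushforward but the twisted inverse image pseudofunctor obtained by compactification (or equivalently Neeman's approach). What makes your substitution legitimate is that for the proper morphism $f$ the two constructions agree, i.e.\ $f^!\cong f^\times$, so the same $f^!$ appears both in the adjunction formula and in the factorization $p_X^!\cong f^!p_Y^!$. Saying this out loud closes the one gap a careful reader would flag.
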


As a generalization of direct limit in triangulated category, the
homotopy colimit is defined as follows.

\begin{definition}
Let $\{\mathcal{C}_e\}$ be a direct system of objects in
$D_{qc}(X)$,
$$\mathcal{C}_1\xrightarrow{f_1}\mathcal{C}_2\xrightarrow{f_2}\cdots.$$
The homotopy colimit $\hocolim \mathcal{C}_e$ is defined by the
following triangle
$$\bigoplus \mathcal{C}_e\xrightarrow{\text{id}-\bigoplus f_e} \bigoplus
\mathcal{C}_e \to \hocolim C_e \to \bigoplus \mathcal{C}_e[1].$$
\end{definition}

\begin{lemma}
Homotopy colimits commute with tensor products, pullbacks and
pushforwards. In particular, we have
\begin{enumerate}
\item $\hocolim \mathcal{H}^i(\mathcal{C}_e)=\mathcal{H}^i(\hocolim
\mathcal{C}_e)$, and
\item $\hocolim R^i\Gamma(\mathcal{C}_e)=R^i\Gamma(\hocolim
\mathcal{C}_e)$
\end{enumerate}
\end{lemma}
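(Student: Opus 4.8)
The plan is to isolate two purely formal properties of a functor and then run everything through the defining triangle of $\hocolim$. Call a functor $G$ between derived categories \emph{admissible} if it is triangulated (it carries distinguished triangles to distinguished triangles and commutes with the shift $[1]$) and it commutes with countable direct sums. If $G$ is admissible and $\{\mathcal C_e\}$ is a direct system with transition maps $f_e:\mathcal C_e\to\mathcal C_{e+1}$, then applying $G$ to
$$\bigoplus_e \mathcal C_e \xrightarrow{\mathrm{id}-\bigoplus f_e} \bigoplus_e \mathcal C_e \longrightarrow \hocolim\mathcal C_e \longrightarrow \Bigl(\bigoplus_e \mathcal C_e\Bigr)[1]$$
and using the identifications $G(\bigoplus_e\mathcal C_e)=\bigoplus_e G(\mathcal C_e)$ together with additivity of $G$ to rewrite $G(\mathrm{id}-\bigoplus f_e)=\mathrm{id}-\bigoplus G(f_e)$, one obtains a distinguished triangle
$$\bigoplus_e G(\mathcal C_e) \xrightarrow{\mathrm{id}-\bigoplus G(f_e)} \bigoplus_e G(\mathcal C_e) \longrightarrow G(\hocolim\mathcal C_e) \longrightarrow \Bigl(\bigoplus_e G(\mathcal C_e)\Bigr)[1].$$
This is a defining triangle for $\hocolim G(\mathcal C_e)$, so $G(\hocolim\mathcal C_e)\cong\hocolim G(\mathcal C_e)$. (The isomorphism need not be canonical, a cone being determined only up to non-canonical isomorphism, but this is harmless for the applications in this paper.)

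It then suffices to verify that $-\otimes^{\mathbf L}_{\mathcal O_X}\mathcal G$, $Lf^*$ and $Rf_*$ are admissible. All three are triangulated, which is standard. The derived tensor product and $Lf^*$ commute with arbitrary direct sums because they are left adjoints --- to $R\mathcal{H}om(\mathcal G,-)$ and to $Rf_*$, respectively --- and left adjoints preserve all colimits; equivalently, one computes them via K-flat resolutions and termwise operations, which visibly commute with coproducts. The one genuinely non-formal point, which I expect to be the main obstacle, is that $Rf_*$ commutes with countable direct sums. For $f$ affine this is elementary, since affine (or \v{C}ech) cohomology commutes with filtered colimits; in general it is the theorem of Neeman \cite{N96} (after B\"okstedt--Neeman), applicable here because every morphism of varieties in sight is quasi-compact and quasi-separated. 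Taking $f$ to be the structure morphism $p:X\to\operatorname{Spec}k$ shows in particular that $R\Gamma(X,-)=Rp_*$ is admissible.

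For part (1), I apply the cohomological functor $\mathcal H^i$ to the defining triangle of $\hocolim\mathcal C_e$. Since direct sums are exact in $\mathcal O_X$-modules, $\mathcal H^i$ commutes with $\bigoplus$, so the long exact cohomology sequence becomes
$$\cdots\to\bigoplus_e\mathcal H^i(\mathcal C_e)\xrightarrow{\mathrm{id}-\bigoplus\mathcal H^i(f_e)}\bigoplus_e\mathcal H^i(\mathcal C_e)\to\mathcal H^i(\hocolim\mathcal C_e)\to\bigoplus_e\mathcal H^{i+1}(\mathcal C_e)\xrightarrow{\mathrm{id}-\bigoplus\mathcal H^{i+1}(f_e)}\bigoplus_e\mathcal H^{i+1}(\mathcal C_e)\to\cdots.$$
Now I invoke the elementary fact that for any $\mathbb N$-indexed direct system $\{\mathcal F_e\}$ of $\mathcal O_X$-modules with transition maps $g_e:\mathcal F_e\to\mathcal F_{e+1}$, the endomorphism $\mathrm{id}-\bigoplus g_e$ of $\bigoplus_e\mathcal F_e$ is injective (kill the components one at a time, starting from $e=1$) and has cokernel the ordinary direct limit $\varinjlim_e\mathcal F_e$ (the standard presentation of a sequential colimit). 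Applying this in degrees $i$ and $i+1$: injectivity in degree $i+1$ makes the boundary map out of $\mathcal H^i(\hocolim\mathcal C_e)$ vanish, and the sequence then identifies $\mathcal H^i(\hocolim\mathcal C_e)$ with $\coker(\mathrm{id}-\bigoplus\mathcal H^i(f_e))=\varinjlim_e\mathcal H^i(\mathcal C_e)$. Finally, the very same computation applied to a direct system of sheaves regarded as complexes in degree $0$ shows that its homotopy colimit coincides with its ordinary direct limit, so $\varinjlim_e\mathcal H^i(\mathcal C_e)=\hocolim\mathcal H^i(\mathcal C_e)$, which is (1).

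Part (2) follows by composing the two previous steps. With $p:X\to\operatorname{Spec}k$ one has $R^i\Gamma=\mathcal H^i\circ Rp_*$, and hence
$$R^i\Gamma(\hocolim\mathcal C_e)=\mathcal H^i\bigl(Rp_*(\hocolim\mathcal C_e)\bigr)\cong\mathcal H^i\bigl(\hocolim Rp_*(\mathcal C_e)\bigr)\cong\varinjlim_e\mathcal H^i\bigl(Rp_*(\mathcal C_e)\bigr)=\varinjlim_e R^i\Gamma(\mathcal C_e)=\hocolim R^i\Gamma(\mathcal C_e),$$
where the first isomorphism is the pushforward case applied to $p$ and the second is part (1) applied in the derived category of $k$-modules. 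Apart from formal manipulations with distinguished triangles and the exactness of direct sums, the only input is Neeman's theorem that $Rf_*$ preserves direct sums.
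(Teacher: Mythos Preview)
Your argument is correct and is essentially the standard one. Note, however, that the paper itself does not supply a proof of this lemma: it is recorded in the preliminaries with a blanket reference to \cite[Section~2.1]{HP} and \cite{N96}, and no \texttt{proof} environment follows the statement. So there is no in-paper proof to compare against.

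That said, what you have written is exactly the proof one would extract from those references. The reduction to the defining triangle, the observation that any triangulated functor preserving countable coproducts commutes with $\hocolim$, the appeal to Neeman's theorem for $Rf_*$, and the computation of $\mathcal H^i(\hocolim\mathcal C_e)$ via the injectivity of $\mathrm{id}-\bigoplus f_e$ with cokernel $\varinjlim$ are all the expected steps. Your remark that the resulting isomorphism is a priori non-canonical (cones being determined only up to non-unique isomorphism) is a fair caveat; in the paper's applications these identifications are used compatibly with the transition maps, which can be arranged, but the point does not affect any of the arguments here.
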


Similarly, the homotopy limit is defined as:

\begin{definition}
Let $\{\mathcal{C}_e\}$ be an inverse system of objects in
$D_{qc}(X)$,
$$\mathcal{C}_1\xleftarrow{f_1}\mathcal{C}_2\xleftarrow{f_2}\cdots.$$
The homotopy limit $\holim \mathcal{C}_e$ is defined by the
following triangle
$$\holim \mathcal{C}_e \to \prod \mathcal{C}_e\xrightarrow{\text{id}-\prod f_e}
\prod \mathcal{C}_e \to \holim \mathcal{C}_e[1].$$
\end{definition}

If $\mathcal{C}_e$ are coherent sheaves, then $\hocolim
\mathcal{C}_i=\varinjlim \mathcal{C}_e$.

\begin{lemma}
If $\{\mathcal{C}_e\}$ is a direct system in $D_{qc}(X)$ and
$\mathcal{D}\in D_{qc}(X)$, then
$$R\mathcal{H}om(\hocolim \mathcal{C}_e, \mathcal{D})\cong \holim
R\mathcal{H}om(\mathcal{C}_e,\mathcal{D}).$$ In particular,
$$D_X(\hocolim \mathcal{C}_e)\cong \holim D_X(\mathcal{C}_e).$$
\end{lemma}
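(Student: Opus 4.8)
The plan is to obtain the isomorphism directly from the defining triangles of $\hocolim$ and $\holim$, using that $R\mathcal{H}om(-,\mathcal{D})$ is a contravariant triangulated functor which converts a coproduct in its first argument into a product. First I would work in the unbounded derived category $D(\QCoh X)$, where all small coproducts and products exist, so that $\bigoplus_e\mathcal{C}_e$ and $\prod_e R\mathcal{H}om(\mathcal{C}_e,\mathcal{D})$ are defined; the objects produced will lie in $D_{qc}(X)$ in the cases of interest. By definition $\hocolim\mathcal{C}_e$ fits in the distinguished triangle
$$\bigoplus_e\mathcal{C}_e \xrightarrow{\mathrm{id}-\bigoplus f_e} \bigoplus_e\mathcal{C}_e \to \hocolim\mathcal{C}_e \to \left(\bigoplus_e\mathcal{C}_e\right)[1],$$
and applying the contravariant triangulated functor $R\mathcal{H}om(-,\mathcal{D})$, with the appropriate rotation and sign convention, turns this into a distinguished triangle one of whose vertices is $R\mathcal{H}om(\hocolim\mathcal{C}_e,\mathcal{D})$, the other two vertices being two copies of $R\mathcal{H}om(\bigoplus_e\mathcal{C}_e,\mathcal{D})$ joined by the map induced by $\mathrm{id}-\bigoplus f_e$.

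Next I would invoke the canonical identification $R\mathcal{H}om(\bigoplus_e\mathcal{C}_e,\mathcal{D})\cong\prod_e R\mathcal{H}om(\mathcal{C}_e,\mathcal{D})$, which holds for sheaves and descends to the derived level once $\mathcal{D}$ is replaced by a K-injective resolution. Under this identification the induced map becomes $\mathrm{id}-\prod_e f_e^{\ast}$, where $f_e^{\ast}$ denotes precomposition with $f_e$. Hence $R\mathcal{H}om(\hocolim\mathcal{C}_e,\mathcal{D})$ sits in the distinguished triangle
$$R\mathcal{H}om\left(\hocolim\mathcal{C}_e,\mathcal{D}\right)\to\prod_e R\mathcal{H}om(\mathcal{C}_e,\mathcal{D})\xrightarrow{\mathrm{id}-\prod f_e^{\ast}}\prod_e R\mathcal{H}om(\mathcal{C}_e,\mathcal{D})\to,$$
which is precisely the triangle defining $\holim R\mathcal{H}om(\mathcal{C}_e,\mathcal{D})$. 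Since the third vertex of a distinguished triangle is determined up to isomorphism by its connecting morphism, this yields $R\mathcal{H}om(\hocolim\mathcal{C}_e,\mathcal{D})\cong\holim R\mathcal{H}om(\mathcal{C}_e,\mathcal{D})$, and the ``in particular'' statement is the case $\mathcal{D}=\omega_X^\bullet$ in view of $D_X(-)=R\mathcal{H}om(-,\omega_X^\bullet)$.

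The only genuinely delicate points are bookkeeping. One must apply ``a contravariant triangulated functor sends a triangle to a triangle'' with the correct rotation, so that the connecting map really is $\mathrm{id}-\prod_e f_e^{\ast}$ and not a sign- or shift-variant of it; any such variant, however, differs by an automorphism of the triangle and hence has the same third vertex, so nothing is lost. Likewise, the isomorphism produced this way is a priori only non-canonical, since the completion of a morphism of triangles to an isomorphism of third vertices is not unique — which is exactly what the statement claims; if one also wanted functoriality in $\{\mathcal{C}_e\}$, one would instead present both sides as $R\mathcal{H}om$ out of, respectively $\holim$ of, the same two-term complex of coproduct/product objects and use that $R\mathcal{H}om(-,\mathcal{D})$ commutes with finite limits. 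Finally, the coproduct-to-product formula for internal $\mathcal{H}om$ should be applied to a K-injective model of $\mathcal{D}$ so that it already holds at the level of complexes.
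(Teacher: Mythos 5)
Your argument is correct and is the standard one: apply the contravariant triangulated functor $R\mathcal{H}om(-,\mathcal{D})$ to the defining triangle of $\hocolim$, use that it turns a coproduct in its first argument into a product, and recognize the resulting distinguished triangle as the one defining $\holim R\mathcal{H}om(\mathcal{C}_e,\mathcal{D})$; your cautionary remarks about passing to a K-injective model, rotation and sign conventions, and the non-uniqueness of the cone are well taken. The paper does not itself supply a proof of this lemma --- it is recalled in the preliminaries with a pointer to \cite[Section 2.1]{HP} and \cite{N96} --- but your proof is exactly the argument one finds in those sources.
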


\subsection{Fourier-Mukai transform}
Let $\hat{A}$ be the dual abelian variety of $A$. Let $P$ be the
normalized Poincare line bundle on $A\times \hat{A}$. Let $p_A$ and
$p_{\hat{A}}$ be the projection from $A\times \hat{A}$ to $A$ and
$\hat{A}$, respectively. Let $\hat{S}$ be the functor between
$\mathcal{O}_A$-modules and $\mathcal{O}_{\hat{A}}$-modules defined
as:
$$\hat{S}(\mathcal{F})=p_{\hat{A},*}(p_A^*\mathcal{F}\otimes P).$$
The \textbf{Fourier-Mukai transform} $R\hat{S}:D(A)\to D(\hat{A})$
is the right derived functor of $\hat{S}$. Similarly, we define
$RS:D(\hat{A})\to D(A)$ as the right derived functor of
$S(\mathcal{G})=p_{A,*}(p_{\hat{A}}^*\mathcal{G}\otimes P).$ We
recall the following propositions from \cite{M81} and \cite{HP}.

\begin{proposition}\label{FM_inverse} (See \cite[Theorem 2.2]{M81}\cite[Theorem
2.18]{HP}) The following properties hold on $D_{qc}(A)$ and
$D_{qc}(\hat{A})$.
$$RS \circ R\hat{S}=(-1_A)^*[-g] \qquad R\hat{S} \circ
RS=(-1_{\hat{A}})^*[-g],$$ where $-1_A$ is the inverse on $A$ and
$[-g]$ denotes the shift by $g$ places to the right.
\end{proposition}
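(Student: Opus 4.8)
The plan is to realize $RS\circ R\hat S$ as a single Fourier--Mukai (integral) functor $D_{qc}(A)\to D_{qc}(A)$ and to compute its kernel. First I would use that a composition of integral functors is again an integral functor, with kernel the ``convolution'' of the two kernels: since $R\hat S$ is the integral transform with kernel $P$ on $A\times\hat A$ and $RS$ the one with kernel $P$ on $\hat A\times A$, the composite $RS\circ R\hat S$ is the integral transform with kernel
\[
\mathcal Q \;=\; Rp_{13,*}\bigl(p_{12}^*P\otimes^L p_{23}^*P\bigr)\ \in\ D_{qc}(A\times A),
\]
the $p_{ij}$ being the projections of $A\times\hat A\times A$ (with the middle $\hat A$ the factor over which the transforms are composed); concretely $(RS\circ R\hat S)(\mathcal F)=Rp_{2,*}(p_1^*\mathcal F\otimes^L\mathcal Q)$ with $p_1,p_2\colon A\times A\to A$.

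The next step is to identify $\mathcal Q$. The restriction of $p_{12}^*P\otimes p_{23}^*P$ to a fibre $\{a\}\times\hat A\times\{b\}$ is $P_a\otimes P_b\cong P_{a+b}$ as a line bundle on $\hat A$ --- this is the biextension (``theorem of the square'') property of $P$ --- so by the seesaw principle $p_{12}^*P\otimes p_{23}^*P\cong\nu^*P\otimes p_{13}^*N$, where $\nu\colon A\times\hat A\times A\to A\times\hat A$, $(a,\xi,b)\mapsto(a+b,\xi)$, and $N$ is a line bundle on $A\times A$; restricting to $A\times\{0\}\times A$, where all three Poincar\'e pullbacks are trivial by normalization, shows $N\cong\mathcal O_{A\times A}$. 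The map $\nu$ fits into a cartesian square together with $p_{13}$, the projection $p_A\colon A\times\hat A\to A$, and the addition $m\colon A\times A\to A$; since $m$ is smooth, flat base change gives $\mathcal Q\cong m^*(Rp_{A,*}P)$. Now $Rp_{A,*}P=RS(\mathcal O_{\hat A})$, and I would invoke the fundamental computation $RS(\mathcal O_{\hat A})\cong k(0_A)[-g]$ (the structure sheaf of the origin of $A$, placed in cohomological degree $g$); since $m^{-1}(0_A)$ is precisely the image of the closed immersion $\gamma\colon A\hookrightarrow A\times A$, $a\mapsto(a,-a)$ (the graph of $-1_A$), this yields $\mathcal Q\cong\gamma_*\mathcal O_A[-g]$. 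Finally, the integral transform with this kernel sends $\mathcal F$ to $Rp_{2,*}(p_1^*\mathcal F\otimes\gamma_*\mathcal O_A)[-g]\cong R(p_2\circ\gamma)_*\bigl((p_1\circ\gamma)^*\mathcal F\bigr)[-g]$ by the projection formula, which equals $(-1_A)^*\mathcal F[-g]$ because $p_1\circ\gamma=\mathrm{id}_A$ and $p_2\circ\gamma=-1_A$. This proves $RS\circ R\hat S=(-1_A)^*[-g]$; exchanging the roles of $A$ and $\hat A$ (using $\widehat{\hat A}=A$) gives $R\hat S\circ RS=(-1_{\hat A})^*[-g]$.

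I expect the main obstacle to be exactly the fundamental computation $RS(\mathcal O_{\hat A})\cong k(0_A)[-g]$ (equivalently $R\hat S(\mathcal O_A)\cong k(\hat 0)[-g]$), which is the heart of Mukai's theorem. Cohomology and base change shows immediately that $RS(\mathcal O_{\hat A})=Rp_{A,*}P$ is (derived) supported at the origin, since the restriction of $P$ to $\{a\}\times\hat A$ is a nontrivial degree-$0$ line bundle, hence has vanishing cohomology, for every $a\neq 0$; but seeing that the complex collapses to a single skyscraper sitting in degree $g$ requires a genuine argument. One way is a Koszul/Kodaira--Spencer computation over the formal neighbourhood of $0_A$: there $P$ restricts to the universal deformation of $\mathcal O_{\hat A}$, the Kodaira--Spencer map identifies the parameters with a basis of $H^1(\hat A,\mathcal O_{\hat A})$, and the cup-product isomorphisms $\bigwedge^i H^1(\hat A,\mathcal O_{\hat A})\xrightarrow{\sim}H^i(\hat A,\mathcal O_{\hat A})$ force $Rp_{A,*}P$ to be represented near the origin by the Koszul complex on those parameters, whose cohomology is $k$ concentrated in degree $g$. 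A lesser nuisance is the bookkeeping of the line bundles pulled back from the base when identifying the convolution kernel. Alternatively one can avoid the explicit kernel by checking that $RS\circ R\hat S$ and $(-1_A)^*[-g]$ agree on the spanning class $\{k(a):a\in A\}$, via $R\hat S(k(a))\cong P_a$ and $RS(P_a)\cong k(-a)[-g]$, but this last isomorphism rests on the same computation in dual form, so the essential difficulty does not change.
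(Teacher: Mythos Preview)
Your proof sketch is correct and follows Mukai's original argument in \cite{M81} essentially line for line: compute the convolution kernel of the composite transform, use the seesaw/biextension property of the Poincar\'e bundle to reduce to $m^*(Rp_{A,*}P)$, and then invoke the fundamental computation $Rp_{A,*}P\cong k(0_A)[-g]$. Your identification of that last computation as the real content, and the Koszul/Kodaira--Spencer approach to it, is exactly right.

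That said, you should be aware that the paper does not give its own proof of this proposition at all: it is stated in the preliminaries section as a result recalled from \cite[Theorem 2.2]{M81} and \cite[Theorem 2.18]{HP}, with no argument. So there is nothing to compare against beyond the cited references, and your sketch reproduces Mukai's proof faithfully.
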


\begin{proposition}\label{FM_vs_Ext} (See \cite[Corollary 2.5]{M81})
For all objects $\mathcal{E},\mathcal{E}'\in D_{qc}(A)$,
$$\Hom_{D_{qc}(A)}(\mathcal{E},\mathcal{E}')\cong
\Hom_{D_{qc}(\hat{A})}(R\hat{S}(\mathcal{E}),R\hat{S}(\mathcal{E}')).$$
\end{proposition}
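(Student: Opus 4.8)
\textbf{Proof proposal for Proposition \ref{FM_vs_Ext}.}
The plan is to deduce the $\Hom$-isomorphism formally from the inversion formula in Proposition \ref{FM_inverse}, which exhibits $R\hat{S}$ and $RS$ as quasi-inverse equivalences (up to a shift and the involution $(-1)^*$). The point is that any functor of triangulated categories that admits a quasi-inverse is automatically fully faithful, so essentially nothing needs to be computed; the only thing to check is that the composite $RS\circ R\hat{S}$ being $(-1_A)^*[-g]$ is harmless for the purpose of comparing $\Hom$-groups.

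First I would apply the functor $R\hat{S}$ to obtain a map
$$\Hom_{D_{qc}(A)}(\mathcal{E},\mathcal{E}') \longrightarrow
\Hom_{D_{qc}(\hat{A})}(R\hat{S}(\mathcal{E}),R\hat{S}(\mathcal{E}')),$$
and then apply $RS$ to obtain
$$\Hom_{D_{qc}(\hat{A})}(R\hat{S}(\mathcal{E}),R\hat{S}(\mathcal{E}')) \longrightarrow
\Hom_{D_{qc}(A)}(RS(R\hat{S}(\mathcal{E})),RS(R\hat{S}(\mathcal{E}'))).$$
By Proposition \ref{FM_inverse} the target of the second arrow is $\Hom_{D_{qc}(A)}((-1_A)^*\mathcal{E}[-g],(-1_A)^*\mathcal{E}'[-g])$, and the composite of the two arrows is the map induced by the functor $(-1_A)^*[-g]$ on $\Hom$-groups. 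Next I would observe that $(-1_A)^*$ is an equivalence of categories (it is induced by an automorphism of the scheme $A$) and that the shift $[-g]$ is an equivalence of triangulated categories; hence the composite $(-1_A)^*[-g]$ induces a bijection on all $\Hom$-groups. Therefore the first arrow (application of $R\hat{S}$) is injective. A symmetric argument, starting instead by applying $RS$ and then $R\hat{S}$ and using the other identity $R\hat{S}\circ RS=(-1_{\hat{A}})^*[-g]$, shows that the first arrow is also surjective: given $\varphi\colon R\hat{S}(\mathcal{E})\to R\hat{S}(\mathcal{E}')$, the morphism $RS(\varphi)$ transported back along the equivalence $(-1_A)^*[-g]$ furnishes a preimage. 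Combining injectivity and surjectivity gives the claimed isomorphism, and one checks along the way that it is indeed the natural map $f\mapsto R\hat{S}(f)$.

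The only mild subtlety — and the closest thing to an obstacle — is bookkeeping the identification $RS\circ R\hat{S}\cong(-1_A)^*[-g]$ as an isomorphism of \emph{functors}, so that the induced maps on $\Hom$ compose correctly and the final map is genuinely $f\mapsto R\hat{S}(f)$ rather than that map twisted by some noncanonical automorphism. This is handled by noting that the isomorphism of functors in Proposition \ref{FM_inverse} is natural, so for a morphism $f\colon\mathcal{E}\to\mathcal{E}'$ the square relating $RS(R\hat{S}(f))$ and $(-1_A)^*(f)[-g]$ commutes; chasing this square shows the two-step composite on $\Hom$-groups equals the bijection induced by $(-1_A)^*[-g]$ precomposed appropriately, which is all that is needed. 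No geometric input beyond Proposition \ref{FM_inverse} is required.
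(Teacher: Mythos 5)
The paper does not supply a proof of this proposition; it is cited directly from Mukai \cite[Corollary 2.5]{M81}, and Mukai's own derivation is exactly the one you sketch, namely a formal consequence of the inversion theorem (Proposition \ref{FM_inverse}). Your argument is correct: the inversion formulas exhibit $RS$ as a quasi-inverse to $R\hat{S}$ up to composing with the auto-equivalence $(-1)^*[-g]$, and any functor with a quasi-inverse (even up to such a twist) is fully faithful. One small point worth making explicit in the surjectivity step: after transporting $RS(\varphi)$ back to a morphism $f\colon \mathcal{E}\to\mathcal{E}'$ along the identification $RS\circ R\hat{S}\cong(-1_A)^*[-g]$, to conclude that $R\hat{S}(f)=\varphi$ you should apply $RS$ to both sides, observe that $RS(R\hat{S}(f))$ and $RS(\varphi)$ agree by construction, and then invoke faithfulness of $RS$ (which follows by the same two-out-of-three composite argument you already gave for $R\hat{S}$). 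You gesture at this, and your remark about carrying the natural isomorphism of functors through so that the final bijection is honestly $f\mapsto R\hat{S}(f)$ is exactly the right thing to worry about, so the proof is sound and takes the same route as the cited source.
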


\begin{proposition}\label{FM_vs_dual} (See
\cite[3.8]{M81}\cite[Lemma 2.20]{HP}) We have $D_A\circ
RS=(-1_A)^*(R\hat{S}\circ D_{\hat{A}})[g]$ on $D_{qc}(A)$.
\end{proposition}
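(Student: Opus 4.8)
We must produce, for every $\mathcal{G}\in D_{qc}(\hat A)$, a natural isomorphism $D_A\bigl(RS(\mathcal{G})\bigr)\cong(-1_A)^{*}\bigl(RS(D_{\hat A}(\mathcal{G}))\bigr)[g]$; the Fourier--Mukai functor written $R\hat S$ in the statement is, on objects of $D_{qc}(\hat A)$, the transform $RS$ under the canonical identification $\widehat{\hat A}=A$. The plan is to deduce this from Grothendieck--Serre duality on the product $A\times\hat A$ (Theorem \ref{GroDuality}) together with the elementary symmetry of the Poincar\'e bundle. Write $q=p_A$ and $\hat q=p_{\hat A}$ for the projections of $A\times\hat A$, and recall that the canonical bundles of $A$ and $\hat A$ are trivial, so $\omega_A^\bullet\cong\mathcal{O}_A[g]$, $\omega_{\hat A}^\bullet\cong\mathcal{O}_{\hat A}[g]$, and, since $\hat q$ is smooth of relative dimension $g$ with relative canonical bundle $q^{*}\omega_A\cong\mathcal{O}_{A\times\hat A}$, also $\omega_{A\times\hat A}^\bullet\cong\hat q^{*}\omega_{\hat A}^\bullet[g]$. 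Unwinding $RS(\mathcal{G})=Rq_{*}(\hat q^{*}\mathcal{G}\otimes P)$, applying $D_A$, and using that $q$ is proper (so Theorem \ref{GroDuality} applies, the two canonical dualizing complexes being compatible via $\omega_{A\times\hat A}^\bullet=q^{!}\omega_A^\bullet$), I would move the dualizing functor inside the pushforward:
$$D_A\bigl(RS(\mathcal{G})\bigr)=D_A\bigl(Rq_{*}(\hat q^{*}\mathcal{G}\otimes P)\bigr)\cong Rq_{*}\,D_{A\times\hat A}\bigl(\hat q^{*}\mathcal{G}\otimes P\bigr).$$

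Next I would evaluate the inner dual. Since $P$ is invertible it pulls out of $R\mathcal{H}om$; substituting $\omega_{A\times\hat A}^\bullet\cong\hat q^{*}\omega_{\hat A}^\bullet[g]$ and using flatness of $\hat q$ to commute $\hat q^{*}$ past $R\mathcal{H}om$ yields
$$D_{A\times\hat A}\bigl(\hat q^{*}\mathcal{G}\otimes P\bigr)=R\mathcal{H}om\bigl(\hat q^{*}\mathcal{G},\omega_{A\times\hat A}^\bullet\bigr)\otimes P^{-1}\cong\hat q^{*}R\mathcal{H}om(\mathcal{G},\omega_{\hat A}^\bullet)\otimes P^{-1}[g]=\bigl(\hat q^{*}D_{\hat A}(\mathcal{G})\otimes P^{-1}\bigr)[g],$$
and combining with the previous display gives $D_A(RS(\mathcal{G}))\cong Rq_{*}\bigl(\hat q^{*}D_{\hat A}(\mathcal{G})\otimes P^{-1}\bigr)[g]$.

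It remains to recognize the right side as a Fourier--Mukai transform. Let $\sigma=(-1_A)\times\mathrm{id}_{\hat A}\colon A\times\hat A\to A\times\hat A$; this is an involution with $\hat q\circ\sigma=\hat q$, $q\circ\sigma=(-1_A)\circ q$, and $\sigma^{*}P\cong P^{-1}$ by the standard behaviour of the Poincar\'e bundle under $(-1_A)\times\mathrm{id}$. Hence $\sigma^{*}\bigl(\hat q^{*}D_{\hat A}(\mathcal{G})\otimes P\bigr)\cong\hat q^{*}D_{\hat A}(\mathcal{G})\otimes P^{-1}$, and since $\sigma$ is an isomorphism with $\sigma^{-1}=\sigma$ we have $\sigma^{*}=R\sigma_{*}$ and therefore $Rq_{*}\circ\sigma^{*}=R(q\circ\sigma)_{*}=R\bigl((-1_A)\circ q\bigr)_{*}=(-1_A)^{*}\circ Rq_{*}$. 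Thus
$$Rq_{*}\bigl(\hat q^{*}D_{\hat A}(\mathcal{G})\otimes P^{-1}\bigr)\cong(-1_A)^{*}Rq_{*}\bigl(\hat q^{*}D_{\hat A}(\mathcal{G})\otimes P\bigr)=(-1_A)^{*}RS\bigl(D_{\hat A}(\mathcal{G})\bigr),$$
and feeding this back produces the desired $D_A(RS(\mathcal{G}))\cong(-1_A)^{*}\bigl(RS\circ D_{\hat A}\bigr)(\mathcal{G})[g]$. Every isomorphism used — Grothendieck duality, flat base change, pulling the line bundle out of $R\mathcal{H}om$, and the change of variables along $\sigma$ — is natural in $\mathcal{G}$, so this is an isomorphism of functors.

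The step I expect to need the most care is commuting $R\mathcal{H}om$ past the flat pullback $\hat q^{*}$. This is immediate when $\mathcal{G}$ has coherent cohomology, because $\hat A$ is regular and so such $\mathcal{G}$ is a perfect complex, for which $R\mathcal{H}om$ commutes with any pullback; but for a genuinely infinite-rank quasi-coherent $\mathcal{G}$ the functor $R\mathcal{H}om(-,\omega_{\hat A}^\bullet)$ (equivalently $D_{\hat A}$) is poorly behaved, so extending the identity to all of $D_{qc}(\hat A)$ requires a limiting/way-out argument reducing to the coherent case — or else the statement should be read on $D_c$. By comparison, the remaining ingredients (triviality of the dualizing complexes on abelian varieties, the identity $\sigma^{*}P\cong P^{-1}$, and the bookkeeping of the shifts $[g]$ and $[2g]$) are routine.
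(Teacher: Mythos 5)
Your proof is correct. The paper does not supply its own argument here --- the proposition is simply cited from Mukai \cite[3.8]{M81} and Hacon--Patakfalvi \cite[Lemma 2.20]{HP} --- and the route you take (Grothendieck duality $Rp_{A,*}\circ D_{A\times\hat A}\cong D_A\circ Rp_{A,*}$, triviality of $\omega_A$, $\omega_{\hat A}$ and hence $\omega_{A\times\hat A}^\bullet\cong p_{\hat A}^*\omega_{\hat A}^\bullet[g]$, then the change of variable along $\sigma=(-1_A)\times\mathrm{id}$ using $\sigma^*P\cong P^{-1}$) is exactly the standard one those references use. Your reading of the statement is also the right one: under the identification $\widehat{\hat A}=A$, the $R\hat S$ appearing on the right in the paper is the transform $\hat A\to A$, i.e.\ $RS$ in this paper's notation, and the domain should be read as $D_{qc}(\hat A)$ rather than $D_{qc}(A)$; without that reinterpretation the composition does not even typecheck, and indeed the form actually used later (e.g.\ $RS(D_{\hat A}(L))\cong(-1_A)^*D_A(RS(L))[-g]$ in the proof of Theorem~\ref{GVtolimKV}) confirms it. Finally, you are right to single out the step $R\mathcal{H}om(p_{\hat A}^*\mathcal{G},p_{\hat A}^*\omega_{\hat A}^\bullet)\cong p_{\hat A}^*R\mathcal{H}om(\mathcal{G},\omega_{\hat A}^\bullet)$ as the one needing $\mathcal{G}$ perfect: on the regular variety $\hat A$ this is automatic for $D_c(\hat A)$, and every application of the proposition in this paper (to $L$, to $R\hat S(D_A(\Omega_e))$, etc.) stays within $D_c$, so the caveat is harmless, though worth recording if one insists on the unrestricted $D_{qc}$ phrasing.
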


The Fourier-Mukai transform commutes with homotopical colimit.

\begin{proposition}\label{FM_vs_colim} (See \cite[Lemma 2.23]{HP})
Let $\{\Lambda_e\}$ be a direct system in $D_{qc}(A)$, Then
$R\hat{S}(\hocolim \Lambda_e)=\hocolim R\hat{S}(\Lambda_e)$.
\end{proposition}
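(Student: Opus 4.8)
The plan is to reduce the statement to the facts that $R\hat{S}$ is a triangulated functor and that it commutes with countable direct sums, and then to apply $R\hat{S}$ to the distinguished triangle defining $\hocolim\Lambda_e$. First I would record that $R\hat{S}\colon D_{qc}(A)\to D_{qc}(\hat{A})$ is exact as a functor of triangulated categories, being a right derived functor; hence it sends distinguished triangles to distinguished triangles and is in particular additive.

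Next I would check that $R\hat{S}$ commutes with the coproducts occurring in the homotopy-colimit triangle. There are two ways to see this. On one hand, by Proposition \ref{FM_inverse} the functor $R\hat{S}$ is an equivalence of categories, hence admits a right adjoint, and therefore preserves all coproducts that exist in $D_{qc}(A)$, in particular $\bigoplus_e\Lambda_e$; since $R\hat{S}$ is also triangulated, it then preserves homotopy colimits. On the other hand --- and this is the only place where any geometry enters --- one may argue directly from the factorization $R\hat{S}(-)=Rp_{\hat{A},*}\bigl(p_A^*(-)\otimes P\bigr)$: the flat pullback $p_A^*$ is exact and commutes with arbitrary direct sums, tensoring with the invertible sheaf $P$ commutes with direct sums and needs no derived correction, and $Rp_{\hat{A},*}$ commutes with arbitrary direct sums because $p_{\hat{A}}$ is proper, hence quasi-compact and quasi-separated; concretely, on the Noetherian scheme $A\times\hat{A}$ the cohomology functors on quasi-compact opens commute with direct sums, so each $R^ip_{\hat{A},*}$ does, and this propagates to the level of complexes. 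Either way one obtains a canonical isomorphism $R\hat{S}\bigl(\bigoplus_e\Lambda_e\bigr)\cong\bigoplus_eR\hat{S}(\Lambda_e)$ under which, by additivity of $R\hat{S}$, the morphism $R\hat{S}(\text{id}-\bigoplus f_e)$ corresponds to $\text{id}-\bigoplus R\hat{S}(f_e)$.

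Finally I would apply $R\hat{S}$ to the triangle
$$\bigoplus_e\Lambda_e\xrightarrow{\text{id}-\bigoplus f_e}\bigoplus_e\Lambda_e\longrightarrow\hocolim\Lambda_e\longrightarrow\Bigl(\bigoplus_e\Lambda_e\Bigr)[1].$$
Exactness gives a distinguished triangle, and by the previous paragraph its first two vertices are both identified with $\bigoplus_eR\hat{S}(\Lambda_e)$ and its first morphism with $\text{id}-\bigoplus R\hat{S}(f_e)$; thus the resulting triangle is a defining triangle for $\hocolim R\hat{S}(\Lambda_e)$. Invoking the uniqueness up to (non-canonical) isomorphism of the third vertex of a distinguished triangle yields $R\hat{S}(\hocolim\Lambda_e)\cong\hocolim R\hat{S}(\Lambda_e)$, as asserted. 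The only step that is not formal triangulated-category bookkeeping is the commutation of $Rp_{\hat{A},*}$ with arbitrary direct sums --- a standard consequence of properness --- and it is bypassed altogether by the equivalence route, so there is essentially no obstacle here.
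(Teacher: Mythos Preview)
Your argument is correct and is the standard one: a triangulated functor that preserves countable coproducts automatically preserves homotopy colimits, since the latter are defined by a triangle built from coproducts. Both of your justifications for coproduct-preservation are valid; the equivalence route via Proposition~\ref{FM_inverse} is the cleanest.

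The paper itself gives no proof of this proposition, merely citing \cite[Lemma~2.23]{HP}. Note, however, that the paper does record (in the unnumbered lemma immediately following the definition of $\hocolim$) that homotopy colimits commute with tensor products, pullbacks, and pushforwards; since $R\hat{S}=Rp_{\hat{A},*}\circ(-\otimes P)\circ p_A^*$ is a composite of exactly these three operations, the proposition follows at once from that lemma. Your second argument essentially unpacks this route at the level of coproducts rather than homotopy colimits directly, which amounts to the same thing. Either way, there is nothing to object to.
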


The Fourier-Mukai transform exchanges direct and inverse images of
isogenies.

\begin{proposition}\label{FM_vs_pullback} (See \cite[3.4]{M81}\cite[Lemma 2.22]{HP})
Let $\phi:A\to B$ be an isogeny of abelian varieties and
$\hat{\phi}:\hat{B}\to \hat{A}$ be the dual isogeny. The following
equalities hold on $D_{qc}(B)$ and $D_{qc}(A)$:
$$\phi^*\circ RS_B\cong RS_A\circ \hat{\phi}_*,$$
$$\phi_*\circ RS_A\cong RS_B\circ \hat{\phi}^*.$$
\end{proposition}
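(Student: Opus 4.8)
The plan is to read both identities as manipulations of the Fourier--Mukai kernels by flat base change and the projection formula, anchored on a single geometric input: the compatibility of the two normalized Poincar\'e bundles under $\phi$ and $\hat\phi$. Explicitly, on $A\times\hat B$ one has the isomorphism $(\phi\times\mathrm{id}_{\hat B})^{*}P_{B}\cong(\mathrm{id}_{A}\times\hat\phi)^{*}P_{A}$; this is a form of the defining property of the dual isogeny, and from a bare-hands definition of $\hat\phi$ it is precisely what the seesaw theorem produces, since both line bundles restrict, fibrewise over $\beta\in\hat B$, to the line bundle on $A$ classified by $\hat\phi(\beta)=[\phi^{*}M_{\beta}]$ and both restrict to $\mathcal O$ over $\{0\}\times\hat B$. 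Since $\phi$ and $\hat\phi$ are isogenies, hence finite and flat, flat base change and the projection formula apply with no restriction, which is what keeps the rest of the argument purely formal.

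For the first identity $\phi^{*}\circ RS_{B}\cong RS_{A}\circ\hat\phi_{*}$ I would work on the product $A\times\hat B$, with projections $r_{A},r_{\hat B}$, together with the two finite flat maps $u=\phi\times\mathrm{id}_{\hat B}\colon A\times\hat B\to B\times\hat B$ and $v=\mathrm{id}_{A}\times\hat\phi\colon A\times\hat B\to A\times\hat A$. The square formed by $u,\phi,r_{A},q_{B}$ is cartesian, and so is the square formed by $v,\hat\phi,r_{\hat B},p_{\hat A}$. Fix $\mathcal H\in D_{qc}(\hat B)$. Flat base change along $\phi$ rewrites $\phi^{*}RS_{B}(\mathcal H)=\phi^{*}Rq_{B,*}(q_{\hat B}^{*}\mathcal H\otimes P_{B})$ as $Rr_{A,*}(u^{*}q_{\hat B}^{*}\mathcal H\otimes u^{*}P_{B})$, and then $q_{\hat B}\circ u=r_{\hat B}$ together with the kernel identity $u^{*}P_{B}\cong v^{*}P_{A}$ turn this into $Rr_{A,*}(r_{\hat B}^{*}\mathcal H\otimes v^{*}P_{A})$. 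On the other side, flat base change along $\hat\phi$ gives $p_{\hat A}^{*}\hat\phi_{*}\mathcal H\cong Rv_{*}r_{\hat B}^{*}\mathcal H$, so the projection formula for $v$ and the identity $p_{A}\circ v=r_{A}$ yield $RS_{A}(\hat\phi_{*}\mathcal H)=Rp_{A,*}(Rv_{*}r_{\hat B}^{*}\mathcal H\otimes P_{A})=Rr_{A,*}(r_{\hat B}^{*}\mathcal H\otimes v^{*}P_{A})$. The two expressions coincide, and the isomorphisms used are functorial in $\mathcal H$, so the first identity follows. Running the same computation verbatim for the isogeny $\hat\phi\colon\hat B\to\hat A$ (whose dual is $\phi$, the Poincar\'e compatibility above being symmetric in $\phi\leftrightarrow\hat\phi$, and the $RS$-transform for $\hat A$ being the $R\hat S$-transform for $A$) gives the companion relation $R\hat S_{B}\circ\phi_{*}\cong\hat\phi^{*}\circ R\hat S_{A}$.

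For the second identity $\phi_{*}\circ RS_{A}\cong RS_{B}\circ\hat\phi^{*}$ I would deduce it from that companion relation by Fourier inversion. By Proposition \ref{FM_inverse} the transforms $RS$ and $R\hat S$ are mutually quasi-inverse up to $(-1)^{*}$ and a shift by $g$; composing the two relations stated there also shows that $RS$ commutes with $(-1)^{*}$. Precomposing $R\hat S_{B}\circ\phi_{*}\cong\hat\phi^{*}\circ R\hat S_{A}$ with $RS_{A}$ and using $R\hat S_{A}\circ RS_{A}=(-1_{\hat A})^{*}[-g]$ together with $(-1_{\hat A})\circ\hat\phi=\hat\phi\circ(-1_{\hat B})$ gives $R\hat S_{B}\circ\phi_{*}\circ RS_{A}\cong(-1_{\hat B})^{*}[-g]\circ\hat\phi^{*}$; applying $RS_{B}$ on the left, using $RS_{B}\circ R\hat S_{B}=(-1_{B})^{*}[-g]$ on the left-hand side and the commutation of $RS_{B}$ with $(-1)^{*}$ on the right-hand side, the common factor $(-1_{B})^{*}[-g]$ cancels and leaves $\phi_{*}\circ RS_{A}\cong RS_{B}\circ\hat\phi^{*}$.

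The step I expect to demand the most care is not the diagram-chasing but the geometric input: producing a genuinely natural isomorphism $(\phi\times\mathrm{id}_{\hat B})^{*}P_{B}\cong(\mathrm{id}_{A}\times\hat\phi)^{*}P_{A}$, that is, reconciling one's chosen construction of $\hat\phi$ with the fibrewise/seesaw description above. The secondary nuisance is keeping the $(-1)^{*}$ translations and the shifts $[-g]$ straight in the inversion step. If one prefers to avoid Proposition \ref{FM_inverse} entirely, the second identity can instead be proved by the base-change argument that parallels the first; that reduces it to the \emph{sheaf} isomorphism $(\phi\times\mathrm{id}_{\hat A})_{*}P_{A}\cong(\mathrm{id}_{B}\times\hat\phi)_{*}P_{B}$ on $B\times\hat A$, which in turn follows from the character decomposition $\phi_{*}\mathcal O_{A}\cong\bigoplus_{\gamma\in\ker\hat\phi}P^{B}_{\gamma}$ together with a seesaw argument on $B\times\hat A$.
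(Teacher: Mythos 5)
The paper states this proposition without proof, citing Mukai [3.4] and Hacon--Patakfalvi [Lemma 2.22], so there is no in-paper argument to compare against; your proposal is correct and essentially reproduces the standard argument used in those references: the kernel identity $(\phi\times\mathrm{id}_{\hat B})^{*}P_{B}\cong(\mathrm{id}_{A}\times\hat\phi)^{*}P_{A}$ by seesaw, then flat base change and the projection formula on $A\times\hat B$ for the first identity, and either the symmetric computation or (as you chose) Fourier inversion for the second. Your handling of the inversion step is also right: deducing $RS\circ(-1)^{*}\cong(-1)^{*}\circ RS$ from Proposition~\ref{FM_inverse}, and cancelling the invertible functor $(-1_{B})^{*}[-g]$.
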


We will use the following consequence of the projection formula:

\begin{proposition}\label{FM_vs_projformula} (See \cite[Lemma
2.1]{PP11}) For all objects $\mathcal{E}\in D_c(A)$ and
$\mathcal{E}'\in D_c(\hat{A})$,
$$H^i(A, \mathcal{E}\otimes RS(\mathcal{E}'))=H^i(\hat{A},
R\hat{S}(\mathcal{E})\otimes \mathcal{E}').$$
\end{proposition}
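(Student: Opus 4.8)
The plan is to reduce both sides to the cohomology of a single complex on the product $A\times\hat{A}$, namely $p_A^*\mathcal{E}\otimes p_{\hat{A}}^*\mathcal{E}'\otimes P$, and then to invoke the evident symmetry of this expression. All tensor products below are derived; this causes no difficulty, since $P$ is a line bundle (hence flat), $\mathcal{E}$ and $\mathcal{E}'$ lie in $D_c$, and $p_A,p_{\hat{A}}$ are proper (because $A,\hat{A}$ are abelian varieties), so every functor that occurs preserves bounded complexes with coherent cohomology and has finite cohomological amplitude.

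First I would unwind the definition $RS(\mathcal{E}')=Rp_{A,*}(p_{\hat{A}}^*\mathcal{E}'\otimes P)$ and apply the projection formula along $p_A$:
$$\mathcal{E}\otimes RS(\mathcal{E}')\cong \mathcal{E}\otimes Rp_{A,*}\bigl(p_{\hat{A}}^*\mathcal{E}'\otimes P\bigr)\cong Rp_{A,*}\bigl(p_A^*\mathcal{E}\otimes p_{\hat{A}}^*\mathcal{E}'\otimes P\bigr).$$
Next, using that $R\Gamma(A,-)\circ Rp_{A,*}=R\Gamma(A\times\hat{A},-)$ — composition of derived direct images along $A\times\hat{A}\to A\to\operatorname{Spec}k$ — I take cohomology to obtain
$$H^i\bigl(A,\mathcal{E}\otimes RS(\mathcal{E}')\bigr)=H^i\bigl(A\times\hat{A},\,p_A^*\mathcal{E}\otimes p_{\hat{A}}^*\mathcal{E}'\otimes P\bigr).$$

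Running the identical argument with the roles of $A$ and $\hat{A}$ interchanged — unwinding $R\hat{S}(\mathcal{E})=Rp_{\hat{A},*}(p_A^*\mathcal{E}\otimes P)$, applying the projection formula along $p_{\hat{A}}$, and using $R\Gamma(\hat{A},-)\circ Rp_{\hat{A},*}=R\Gamma(A\times\hat{A},-)$ — gives
$$H^i\bigl(\hat{A},R\hat{S}(\mathcal{E})\otimes\mathcal{E}'\bigr)=H^i\bigl(A\times\hat{A},\,p_A^*\mathcal{E}\otimes p_{\hat{A}}^*\mathcal{E}'\otimes P\bigr),$$
and comparing the two displayed identities yields the claim. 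The step I expect to be the \emph{main obstacle}, such as it is, is justifying the derived projection formula in the stated generality: one must know that $Rp_{A,*}$ commutes with arbitrary derived tensoring by $\mathcal{E}$, which rests on $\mathcal{E}\in D_c$ and the properness (hence finite cohomological dimension) of $p_A$. Once this is in place — and for proper morphisms of Noetherian schemes it is standard — the rest is bookkeeping with the functorial identities $R(g\circ f)_*=Rg_*\circ Rf_*$ and flat base-independent pullback.
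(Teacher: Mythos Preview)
Your argument is correct and is precisely the standard proof: reduce both sides, via the derived projection formula and composition of pushforwards, to $H^i(A\times\hat{A},\,p_A^*\mathcal{E}\otimes p_{\hat{A}}^*\mathcal{E}'\otimes P)$. The paper itself does not supply a proof of this proposition; it is recorded as a preliminary fact with a citation to \cite[Lemma~2.1]{PP11}, where exactly this projection-formula argument appears, so there is nothing to compare beyond noting that you have reproduced the cited proof.
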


\subsection{Inverse limit}

We refer to \cite[Chapter I, \S 4]{H75} and \cite[$0_{\text{III}}$,
\S 13]{EGA} for details in this section.

Let $\{\Omega_e\}$ be an inverse system of coherent sheaves. We say
$\{\Omega_e\}$ satisfies \textbf{the Mittag-Leffler condition}, if
for any $e\geq 0$ the image of $\Omega_{e'}\to \Omega_{e}$
stabilized for $e'$ sufficiently large. The inverse limit functor is
always left exact in the sense that if $\{\mathcal{F}_e\}$,
$\{\mathcal{G}_e\}$ and $\{\mathcal{H}_e\}$ are inverse systems of
coherent sheaves and the following exact sequences
$$0 \to \mathcal{F}_e \to \mathcal{G}_e \to \mathcal{H}_e \to 0$$
are compatible with maps in the inverse systems, then $$0 \to
\varprojlim\mathcal{F}_e \to \varprojlim\mathcal{G}_e \to
\varprojlim\mathcal{H}_e$$ is exact in the category of
quasi-coherent sheaves. By a theorem of Roos \cite{R61}, the right
derived functors $R^i\varprojlim=0$ for $i\geqslant 2$. Hence, we
have a long exact sequence
$$0 \to
\varprojlim\mathcal{F}_e \to \varprojlim\mathcal{G}_e \to
\varprojlim\mathcal{H}_e \to R^1\varprojlim\mathcal{F}_e \to
R^1\varprojlim\mathcal{G}_e \to R^1\varprojlim\mathcal{H}_e \to 0.$$

\begin{lemma} \label{ML_vs_projlim} (See \cite[Corollary I.4.3]{H75})
If $\{\Omega_e\}$ satisfies the Mittag-Leffler condition, then
$R^1\varprojlim\Omega_e=0$.
\end{lemma}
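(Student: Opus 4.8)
The plan is to compute $R^1\varprojlim$ through the ``Roos complex'' and to reduce to the case of surjective transition maps. For an inverse system $\Omega_1\xleftarrow{f_1}\Omega_2\xleftarrow{f_2}\cdots$ of quasi-coherent sheaves, the homotopy-limit triangle together with Roos's vanishing $R^i\varprojlim=0$ for $i\geq 2$ yields the standard identification $\varprojlim\Omega_e=\ker d$ and $R^1\varprojlim\Omega_e=\coker d$, where $d\colon\prod_e\Omega_e\to\prod_e\Omega_e$ sends $(x_e)_e$ to $(x_e-f_e(x_{e+1}))_e$. Thus the entire content of the lemma is the surjectivity of the sheaf morphism $d$ attached to $\{\Omega_e\}$.

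First I would use the Mittag--Leffler hypothesis to replace $\{\Omega_e\}$ by a system with surjective transition maps without changing $\varprojlim$ or $R^1\varprojlim$. For each $e$ the descending chain of quasi-coherent subsheaves $\im(\Omega_{e'}\to\Omega_e)\subseteq\Omega_e$ stabilizes; let $\mathcal{G}_e\subseteq\Omega_e$ denote the stable subsheaf, which is again quasi-coherent, with the $\mathcal{G}_e$ forming a subsystem. Choosing indices large enough, the transition maps of $\Omega_\bullet$ restrict to surjections $\mathcal{G}_{e+1}\twoheadrightarrow\mathcal{G}_e$, while the quotient system $\{\Omega_e/\mathcal{G}_e\}$ has transition maps that are eventually zero: for each $e$ the composite $\Omega_{e'}\to\Omega_e\to\Omega_e/\mathcal{G}_e$ vanishes once $e'\gg e$. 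For a system with eventually-zero transitions $d$ is even an isomorphism, since for a given $(y_e)_e$ one can write down the explicit preimage $x_e=\sum_{j\geq e}g_{e,j}(y_j)$, a finite sum, where $g_{e,j}$ denotes the composite transition map to index $e$; hence such a system has vanishing $\varprojlim$ and $R^1\varprojlim$. The six-term exact sequence of $R^\bullet\varprojlim$ applied to $0\to\{\mathcal{G}_e\}\to\{\Omega_e\}\to\{\Omega_e/\mathcal{G}_e\}\to 0$ then gives $R^1\varprojlim\Omega_e\cong R^1\varprojlim\mathcal{G}_e$, so it suffices to treat a system of quasi-coherent sheaves with surjective transition maps.

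For such a system I would verify surjectivity of $d\colon\prod_e\mathcal{G}_e\to\prod_e\mathcal{G}_e$ by testing on affine opens. Over an affine $U=\operatorname{Spec} R$ write $\mathcal{G}_e|_U=\widetilde{M_e}$; surjectivity of the sheaf maps $\mathcal{G}_{e+1}\to\mathcal{G}_e$ forces surjectivity of the $R$-module maps $M_{e+1}\to M_e$, and, since products of sheaves are computed sectionwise, $\bigl(\prod_e\mathcal{G}_e\bigr)(U)=\prod_e M_e$. The induced map $\prod_e M_e\to\prod_e M_e$ is surjective by the classical recursive lift: given $(y_e)_e$, choose $x_1$ arbitrarily and inductively pick $x_{e+1}$ with $f_e(x_{e+1})=x_e-y_e$, which is possible since each $f_e$ is a surjection of modules. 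Hence $d$ is surjective on sections over every affine open, so $\coker d=0$, giving $R^1\varprojlim\mathcal{G}_e=0$ and therefore $R^1\varprojlim\Omega_e=0$.

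The one genuinely delicate point, and the reason this is not simply a citation of the module case, is the discrepancy between sheaves and their sections: the Mittag--Leffler condition for the sheaves $\{\Omega_e\}$ need not descend to the Mittag--Leffler condition for the section modules $\{\Omega_e(U)\}$, so one cannot run the recursive lift openwise from the start. Carrying out the reduction to surjective transition maps at the level of sheaves --- where surjectivity \emph{does} pass to sections over affines --- is exactly what sidesteps this, and I would flag it as the crux of the argument.
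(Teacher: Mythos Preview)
The paper does not supply its own proof of this lemma; it is stated with a citation to \cite[Corollary~I.4.3]{H75} and used throughout as a known fact. Your argument is correct and is essentially the standard one: identify $R^1\varprojlim$ with the cokernel of the shift endomorphism $d$ of $\prod_e\Omega_e$ via the homotopy-limit triangle, use the Mittag--Leffler hypothesis and the six-term exact sequence to reduce to the stable-image subsystem $\{\mathcal{G}_e\}$ with surjective transition maps, and then check surjectivity of $d$ affine-locally by the elementary recursive lift. Your observation that the reduction to surjective maps must be carried out at the sheaf level---because Mittag--Leffler for the sheaves need not imply Mittag--Leffler for their sections, whereas surjectivity of a map of quasi-coherent sheaves \emph{does} pass to sections over affines---is precisely the point that distinguishes the sheaf case from the module case, and you have handled it correctly. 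There is nothing further in the paper to compare against.
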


\begin{theorem} \label{commute_projlim} (See \cite[Theorem I.4.5]{H75})
Let $\{\Omega_e\}$ be an inverse system of coherent sheaves on a
variety $X$. Let $T$ be a functor on $D(X)$ which commutes with
arbitrary direct products. Suppose that $\{\Omega_e\}$ satisfies the
Mittag-Leffler condition. Then for each $i$, there is an exact
sequence $$0\to R^1\varprojlim R^{i-1}T(\Omega_e)\to
R^iT(\varprojlim \Omega_e)\to \varprojlim R^iT(\Omega_e) \to 0.$$ In
particular, if for some $i$, $\{R^{i-1}T(\Omega_e)\}$ satisfies the
Mittag-Leffler condition, then $R^iT(\varprojlim \Omega_e)\cong
\varprojlim R^iT(\Omega_e)$.
\end{theorem}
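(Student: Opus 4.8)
The plan is to run everything through the standard two-term complex that computes $R\varprojlim$ over the index set $\mathbb{N}$, using the Mittag--Leffler hypothesis only to make the relevant cokernels vanish. First I would record the short exact sequence of quasi-coherent sheaves
$$0 \to \varprojlim_e \Omega_e \to \prod_e \Omega_e \xrightarrow{\ \text{id}-\prod f_e\ } \prod_e \Omega_e \to 0,$$
where the $f_e$ are the transition maps and $\prod f_e$ is the induced ``shift'' endomorphism $(x_e)_e \mapsto (f_e(x_{e+1}))_e$. That the kernel of $\text{id}-\prod f_e$ is $\varprojlim_e\Omega_e$ is the definition of the inverse limit; that $\text{id}-\prod f_e$ is surjective is precisely the assertion $R^1\varprojlim_e\Omega_e=0$, which holds by Lemma \ref{ML_vs_projlim} because $\{\Omega_e\}$ is Mittag--Leffler. (Equivalently, combined with Roos's vanishing $R^{\ge 2}\varprojlim=0$, this two-term complex represents $R\varprojlim_e\Omega_e$, concentrated in degree $0$.)

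Next I would apply $T$. The displayed sequence is a distinguished triangle in $D(X)$, so the triangulated functor $T$ carries it to a distinguished triangle, and taking cohomology sheaves gives the long exact sequence
$$\cdots \to R^iT\bigl(\varprojlim_e\Omega_e\bigr) \to \mathcal{H}^i\bigl(T(\textstyle\prod_e\Omega_e)\bigr) \xrightarrow{\ \text{id}-\prod f_e\ } \mathcal{H}^i\bigl(T(\textstyle\prod_e\Omega_e)\bigr) \to R^{i+1}T\bigl(\varprojlim_e\Omega_e\bigr) \to \cdots .$$
Since $T$ commutes with arbitrary direct products, $T(\prod_e\Omega_e)\cong\prod_e T(\Omega_e)$, and then — using that products are exact in the target sheaf category, so that cohomology commutes with products — $\mathcal{H}^i(T(\prod_e\Omega_e))\cong\prod_e R^iT(\Omega_e)$. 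Under this identification, and since $T$ is additive, the map $\text{id}-\prod f_e$ becomes the standard shift-difference on $\prod_e R^iT(\Omega_e)$ for the inverse system with transition maps $R^iT(f_e)$, so its kernel is $\varprojlim_e R^iT(\Omega_e)$ and its cokernel is $R^1\varprojlim_e R^iT(\Omega_e)$.

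Finally I would disassemble the long exact sequence: the cokernel of $\text{id}-\prod f_e$ in degree $i-1$, namely $R^1\varprojlim_e R^{i-1}T(\Omega_e)$, injects into $R^iT(\varprojlim_e\Omega_e)$ with cokernel the kernel of $\text{id}-\prod f_e$ in degree $i$, namely $\varprojlim_e R^iT(\Omega_e)$; this is exactly the asserted short exact sequence. For the ``in particular'' clause one applies Lemma \ref{ML_vs_projlim} once more: if $\{R^{i-1}T(\Omega_e)\}$ is Mittag--Leffler the left-hand term vanishes, leaving the isomorphism $R^iT(\varprojlim_e\Omega_e)\cong\varprojlim_e R^iT(\Omega_e)$.

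The step I expect to be the main obstacle is the identification $\mathcal{H}^i(T(\prod_e\Omega_e))\cong\prod_e R^iT(\Omega_e)$: one must be careful about which abelian category the infinite products live in and must know that these products are exact there, so that a product of $T$-acyclic (e.g.\ injective) resolutions still computes $RT$ and so that taking cohomology commutes with the product. For sheaves of $\mathcal{O}_X$-modules the product is computed sectionwise and is therefore exact, which is what makes the argument go through; this, together with the hypothesis that $T$ itself commutes with products, is the only non-formal input, the rest being bookkeeping with the two-term $R\varprojlim$ complex.
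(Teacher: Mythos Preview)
The paper does not supply its own proof of this statement: it is recorded as a known result with the citation ``See \cite[Theorem I.4.5]{H75}'' and no argument is given. Your proposal is the standard proof (and essentially the one Hartshorne gives): realize $R\varprojlim$ over $\mathbb{N}$ by the two-term complex $\prod\Omega_e\xrightarrow{\text{id}-\text{shift}}\prod\Omega_e$, use Mittag--Leffler to make this a short exact sequence, push it through $T$, and read off the Milnor-type sequence from the resulting long exact sequence after identifying $\mathcal{H}^iT(\prod\Omega_e)\cong\prod R^iT(\Omega_e)$. This is correct, and your caveat about needing products to be exact in the target so that cohomology commutes with them is exactly the point one must check; for $\mathcal{O}_X$-modules this holds since products are computed sectionwise.
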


In applications, the functor $T$ above can be $\Gamma$, $f_*$, $S$
and $\hat{S}$ as in the Fourier-Mukai transform.

\subsection{Spectral sequence}

We recall the definition of spectral sequence from
\cite[0$_{\text{III}}$, \S 11]{EGA}. We also refer to
\cite[III.7]{GM03}. Let $\mathscr{C}$ be an abelian category. A
\textbf{(biregular) spectral sequence} $E$ on $\mathscr{C}$ consists
of the following ingredients:
\begin{enumerate}
\item A family of objects $\{E^{p,q}_{r}\}$ in $\mathscr{C}$,
where $p,q,r\in \mathbb{Z}$ and $r\geqslant 2$, such that for any
fixed pair $(p,q)$, $E^{p,q}_r$ stabilizes when $r$ is sufficiently
large. We denote the stable objects by $E^{p,q}_\infty$.
\item A family of morphisms $d^{p,q}_r:E^{p,q}_r\to
E^{p+r,q-r+1}_r$ satisfying $$d^{p+r,q-r+1}_r\circ d^{p,q}_r=0.$$
\item A family of isomorphisms
$\alpha^{p,q}_r:\ker(d^{p,q}_r)/\im(d^{p-r,q+r-1}_r)\xrightarrow{\sim}
E^{p,q}_{r+1}$.
\item A family of objects $\{E^n\}$ in $\mathscr{C}$. For every
$E_n$, there is a bounded decreasing filtration $\{F^pE^n\}$ in the
sense that there is some $p$ such that $F^pE^n=E^n$ and there is
some $p$ such that $F^pE^n=0$.
\item A family of isomorphisms
$\beta^{p,q}:E^{p,q}_\infty\xrightarrow{\sim}F^pE^{p+q}/F^{p+1}E^{p+q}$.
\end{enumerate}
We say the spectral sequence $\{E^{p,q}_r\}$ converges to $\{E^n\}$
and write $$E^{p,q}_2\Rightarrow E^{p+q}.$$

A morphism $\phi:E\to H$ between two spectral sequences on
$\mathscr{C}$ is a family of morphisms $\phi^{p,q}_r:E^{p,q}_r\to
H^{p,q}_r$ and $\phi^n:E^n\to H^n$ such that $\phi$ is compatible
with $d$, $\alpha$, the filtration and $\beta$.

\begin{theorem}
[Grothendieck] Let $\mathscr{A}, \mathscr{B}$ and $\mathscr{C}$ be
abelian categories. Let $F:\mathscr{A}\to \mathscr{B}$ and
$G:\mathscr{B}\to \mathscr{C}$ be two left exact functors. Suppose
every object in $\mathscr{A}$ and $\mathscr{B}$ has finite injective
resolution and the class of injective objects in $\mathscr{B}$ is
sufficiently large. Then for any object $X$ in $\mathscr{A}$, there
exists a spectral sequence with $E^{p,q}_2=R^pG(R^qF(X))$ converging
to $E^n=R^n(G\circ F)(X)$. It is functorial in $X$.
\end{theorem}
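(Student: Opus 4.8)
The statement is the classical Grothendieck spectral sequence, and the plan is to run the usual Cartan--Eilenberg resolution argument; the hypothesis that every object of $\mathscr A$ and $\mathscr B$ has finite injective dimension is only there to keep every complex in sight bounded, so that both spectral sequences below are biregular in the sense recalled above and convergence needs no extra thought.

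First I would choose a finite injective resolution $0\to X\to I^0\to I^1\to\cdots\to I^N\to 0$ in $\mathscr A$ and apply $F$, obtaining a bounded complex $F(I^\bullet)$ in $\mathscr B$ with $H^q\big(F(I^\bullet)\big)\cong R^qF(X)$ by definition of the derived functor. Using that $\mathscr B$ has enough injectives (and finite injective dimension), I would then choose a Cartan--Eilenberg resolution $J^{\bullet,\bullet}$ of $F(I^\bullet)$: a double complex of injective objects of $\mathscr B$, bounded in both directions, together with an augmentation $F(I^\bullet)\to J^{\bullet,0}$, chosen so that each column $J^{p,\bullet}$ is an injective resolution of $F(I^p)$ and so that the induced complexes of vertical cocycles, vertical coboundaries and vertical cohomology are injective resolutions of the cocycles, coboundaries and cohomology $R^qF(X)$ of $F(I^\bullet)$ respectively. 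Producing such a $J^{\bullet,\bullet}$ with all of these exactness properties, by iterating the horseshoe lemma, is the technical core of the argument.

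Next I would apply $G$, form the total complex $T^\bullet=\mathrm{Tot}\,G(J^{\bullet,\bullet})$, and compute $H^\bullet(T^\bullet)$ in the two ways given by the spectral sequences of the double complex $G(J^{\bullet,\bullet})$. Taking vertical cohomology first: since $J^{p,\bullet}$ is an injective resolution of $F(I^p)$ and $F(I^p)$ is $G$-acyclic --- the one place the ``Grothendieck situation'' enters, namely that $F$ sends injective objects of $\mathscr A$ to $G$-acyclic objects of $\mathscr B$, which holds in every application in this paper (for instance the pushforward of a flasque sheaf is flasque) --- everything off the row $q=0$ dies, because $R^qG\big(F(I^p)\big)=0$ for $q>0$, and on the row $q=0$ the remaining horizontal differential computes $H^p\big((G\circ F)(I^\bullet)\big)=R^p(G\circ F)(X)$, the last identification because injective objects are acyclic for the left exact functor $G\circ F$. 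So this spectral sequence collapses and yields $H^n(T^\bullet)\cong R^n(G\circ F)(X)$. Taking horizontal cohomology first, the defining exactness properties of the Cartan--Eilenberg resolution say that the $q$-th row-cohomology complex is an injective resolution of $H^q\big(F(I^\bullet)\big)=R^qF(X)$; applying $G$ and taking cohomology in the remaining direction gives $E_2^{p,q}=R^pG\big(R^qF(X)\big)$. Both spectral sequences abut to $H^\bullet(T^\bullet)$, so combining the two computations produces the desired spectral sequence $E_2^{p,q}=R^pG\big(R^qF(X)\big)\Rightarrow R^{p+q}(G\circ F)(X)$.

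For functoriality, a morphism $X\to X'$ lifts to a morphism of injective resolutions, unique up to homotopy; applying $F$, lifting this to a morphism of Cartan--Eilenberg resolutions (again by the horseshoe lemma, again unique up to homotopy), and applying $G$ yields a morphism of double complexes, hence a morphism of the associated spectral sequences compatible with the differentials $d$, the isomorphisms $\alpha$, the filtration and the isomorphisms $\beta$. I expect the main obstacle to be the construction and verification of the Cartan--Eilenberg resolution with the stated exactness of its cocycle, coboundary and cohomology subquotients, together with the index and boundedness bookkeeping needed to see that both spectral sequences really are biregular.
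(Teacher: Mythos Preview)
The paper does not supply a proof of this theorem: it is quoted as a classical result of Grothendieck, with pointers to \cite[0$_{\text{III}}$, \S 11]{EGA} and \cite[III.7]{GM03}, and is used only as a black box in the arguments of Section~4. Your proposal is the standard Cartan--Eilenberg construction found in those references and is correct; in particular you have rightly isolated that the real content behind the vague phrase ``the class of injective objects in $\mathscr{B}$ is sufficiently large'' is that $\mathscr{B}$ has enough injectives and that $F$ sends injectives of $\mathscr{A}$ to $G$-acyclic objects of $\mathscr{B}$, which is exactly what is used in the applications here (e.g.\ $F=\hat S$ or $F=\mathcal{H}^\bullet$ followed by $G=\Gamma$).
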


In this paper, we need to know when the morphisms between the limits
are zero.

\begin{lemma}\label{spec_seq_zero_map}
Let $$\xymatrix{ E_2^{i,j} \ar@2{->}[r] \ar^{\phi_2^{i,j}}[d] & E^{i+j} \ar^{\phi^{i+j}}[d] \\
H_2^{i,j} \ar@2{->}[r] & H^{i+j}}$$ be two spectral sequences with
commutative maps. Let $l$ and $a$ be integers. Suppose that
$E_2^{i,l-i}=0$ for $i<a$, $H_2^{i,l-i}=0$ for $i>a$ and
$\phi_2^{a,l-a}=0$. Then $\phi^l=0$.
\end{lemma}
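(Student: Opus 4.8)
The plan is to chase an arbitrary element (or rather, to argue functorially with subquotients) through the filtration on $E^l$ and $H^l$, using the vanishing hypotheses to pin down which graded pieces survive. First I would recall that the abutment $E^l$ carries a bounded decreasing filtration $F^\bullet E^l$ with $F^p E^l/F^{p+1}E^l \cong E^{p,l-p}_\infty$, and similarly for $H^l$. The hypothesis $E_2^{i,l-i}=0$ for $i<a$ forces $E^{i,l-i}_\infty=0$ for $i<a$ (a subquotient of $E_2^{i,l-i}$), hence $F^a E^l = E^l$; that is, the filtration on $E^l$ starts in degree $a$. Dually, $H_2^{i,l-i}=0$ for $i>a$ forces $H^{i,l-i}_\infty=0$ for $i>a$, so $F^{a+1}H^l=0$; the filtration on $H^l$ ends by degree $a+1$. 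Consequently the composite $E^l = F^a E^l \twoheadrightarrow F^a E^l/F^{a+1}E^l \cong E^{a,l-a}_\infty$ is surjective, and $H^l \hookleftarrow F^a H^l \to F^a H^l/F^{a+1}H^l = H^l$ shows $H^l \cong H^{a,l-a}_\infty$ via the projection; more precisely the inclusion $F^a H^l \hookrightarrow H^l$ is an isomorphism and the quotient map $F^a H^l \to H^{a,l-a}_\infty$ is an isomorphism, so $H^l \cong H^{a,l-a}_\infty$.

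Next I would use compatibility of the morphism of spectral sequences $\phi$ with the filtrations and with the isomorphisms $\beta$. Because $\phi^l$ respects filtrations, it restricts to $\phi^l: F^a E^l \to F^a H^l$ and induces on the graded piece in degree $a$ the map $E^{a,l-a}_\infty \to H^{a,l-a}_\infty$, which under the identification with the $\infty$-terms is the map $\phi^{a,l-a}_\infty$ induced by $\phi_2^{a,l-a}$ on passing through the pages. Since $\phi_2^{a,l-a}=0$, every induced map $\phi_r^{a,l-a}$ is zero on the associated subquotients, so $\phi^{a,l-a}_\infty=0$. Now assemble: given $x\in E^l$, its image $\phi^l(x)\in H^l = F^a H^l$; its class in $F^a H^l/F^{a+1}H^l = H^{a,l-a}_\infty$ equals $\phi^{a,l-a}_\infty$ applied to the class of $x$ in $E^{a,l-a}_\infty$, hence is $0$; but $F^{a+1}H^l=0$, so $\phi^l(x)=0$. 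Therefore $\phi^l=0$.

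The main obstacle, such as it is, is purely bookkeeping: one must be careful that the hypothesis is on the $E_2$-page while the abutment sees only the $E_\infty$-page, and verify that vanishing propagates from $E_2$ to $E_\infty$ in the right direction — for the source we need the top graded piece ($p=a$) to be a quotient, which uses vanishing \emph{below} $a$, and for the target we need that piece to be the whole abutment, which uses vanishing \emph{above} $a$. The asymmetry in the roles of $E$ and $H$ (one supplies a surjection onto the $(a,l-a)$ piece, the other an injection from it) is exactly matched by the asymmetry $i<a$ versus $i>a$ in the hypotheses, and the single shared slot $(a,l-a)$ is where $\phi_2=0$ does its work. Everything else is the standard formalism of filtered objects, so no serious calculation is required.
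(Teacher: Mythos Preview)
Your approach is correct and essentially the same as the paper's: propagate the $E_2$-vanishing to $E_\infty$, then use the resulting collapse of the filtrations on the abutments. The paper finishes by a descending induction on the filtration index, showing $F^p\phi^l=0$ for all $p$; your one-step argument is more direct but rests on the same observations.

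There is, however, one slip. You claim that the inclusion $F^a H^l \hookrightarrow H^l$ is an isomorphism, and later use ``$\phi^l(x)\in H^l = F^aH^l$'' to place $\phi^l(x)$ in $F^aH^l$. The hypothesis $H_2^{i,l-i}=0$ for $i>a$ only yields $F^{a+1}H^l=0$; nothing prevents $H_\infty^{i,l-i}$ from being nonzero for $i<a$, so $F^aH^l$ may well be a proper subobject of $H^l$. The fix is immediate and already implicit in what you wrote: you correctly established $E^l = F^aE^l$, and since $\phi^l$ respects filtrations, $\phi^l(E^l)=\phi^l(F^aE^l)\subseteq F^aH^l$. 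With this correction the rest of your argument (the class of $\phi^l(x)$ in $F^aH^l/F^{a+1}H^l\cong H_\infty^{a,l-a}$ is $\phi_\infty^{a,l-a}$ of the class of $x$, hence zero, and $F^{a+1}H^l=0$) goes through unchanged.
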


\begin{proof}
Since $$E_3^{i,j}\cong \ker(E_2^{i,j}\to
E_2^{i+2,j-1})/\im(E_2^{i-2,j+1}\to E_2^{i,j}),$$ it follows that
$E_3^{i,l-i}=0$ for $i<a$, $H_3^{i,l-i}=0$ for $i>a$ and
$\phi_3^{a,l-a}=0$. Hence $E_\infty^{i,l-i}=0$ for $i<a$,
$H_\infty^{i,l-i}=0$ for $i>a$ and $\phi_\infty^{a,l-a}=0$, by
induction. Let $\{F^pE^l\}$ and $\{F^pH^l\}$ be the filtration for
$E^l$ and $H^l$, respectively.

We prove by induction that $F^p\phi^l:F^pE^l\to F^pH^l$ is zero for
any $p$. For $p\geq a+1$, we have that $F^pH^l=0$. Hence
$F^p\phi^l=0$. Suppose $F^{i+1}\phi^l=0$ for some $i\leq a$. Since
we have the following commutative diagram with exact rows:
$$\xymatrix{
0 \ar[r]& F^{i+1}E^l \ar[r]\ar^{F^{i+1}\phi^l}[d]& F^iE^l
\ar[r]\ar^{F^i\phi^l}[d]&
E_\infty^{i,l-i} \ar[r]\ar^{\phi_\infty^{i,l-i}}[d]& 0 \\
0 \ar[r]& F^{i+1}H^l \ar[r]& F^iH^l \ar[r]& H_\infty^{i,l-i} \ar[r]&
0 }$$

\begin{enumerate}
\item If $i=a$, then $F^{i+1}H^l=0$. By the Snake Lemma, we have that
$$\coker F^i\phi^l\cong \coker \phi_\infty^{i,l-i}\cong
H^{i,l-i}\cong F^iH^l.$$ Hence, $F^i\phi^l=0$.

\item If $i<a$, then
$E_\infty^{i,l-i}=0$. By induction, we may assume that
$F^{i+1}\phi^l=0$. By the Snake Lemma, we have that
$$\ker F^i\phi^l\cong \ker F^{i+1}\phi^l \cong F^{i+1}E^l \cong
F^iE^l.$$ We also obtain that $F^i\phi^l=0$.
\end{enumerate}

The lemma follows.
\end{proof}

\subsection{Frobenius morphism and Cartier module}

Let $k$ be a perfect field of positive characteristic. Let $X$ be a
normal variety over $k$. The (absolute) Frobenius morphism $F:X\to
X$ is defined as identity on the topological space and taking $p$-th
power on local sections. We denote by $F^e$ the $e$-th iteration of
$F$.

A \textbf{Cartier module} is a coherent sheaf $\mathcal{F}$ on $X$
equipped with an $\mathcal{O}_X$-linear map
$$\phi:F^e_*\mathcal{F}\to \mathcal{F},$$ which is also called a
$p^{-e}$-linear map in \cite{BS}. A well-known example of Cartier
module is the canonical sheaf $\omega_X$ with the trace map $$\Tr:
F_*\omega_X\to \omega_X,$$ which is the dual of the structure map
$\mathcal{O}_X\to F_*\mathcal{O}_X$.

Suppose $(\mathcal{F},\phi)$ is a Cartier module on $X$. It is easy
to see that we can iterate $\phi$ to get a sequence of maps
$$\cdots \to F^{3e}_*\mathcal{F} \xrightarrow{F^{2e}_*\phi}
F^{2e}_*\mathcal{F} \xrightarrow{F^e_*\phi} F^e_*\mathcal{F}
\xrightarrow{\phi} \mathcal{F}.$$ It is known that this inverse
system of coherent sheafs satisfies the Mittag-Leffler condition
\cite[Proposition 8.1.4]{BS}.

\section{Examples}

We will propose several examples for the pathologies appearing in
the context of Cartier modules.

\begin{example} \label{exmpl_HP}
This example first appears in \cite[Example 3.21]{HP}. Let $A$ be an
elliptic curve, $\Omega_0=\omega_A$, $\Omega_e=F^e_*\Omega_0$ and
$\alpha:F_*\Omega_0\to \Omega_0$ be the trace map.

When $A$ in ordinary, then $\Lambda=\hocolim
R\hat{S}(D_A(\Omega_e))=\bigoplus_{y\in A_p}k(y)$ where $A_p$
denotes the set of all $p^\infty$-torsion points in $\hat{A}$ and
$R\hat{S}(\Omega)=\prod_{y\in A_p}k(y)[-1]$. Hence
$$\Supp R^1\hat{S}(\Omega)=A_p,$$ which is a countable dense set by
\cite[5.30]{MvdG}. By \cite[Proposition 3.18]{HP}, $V^1(\Omega)=A_p$
which is dense in $\hat{A}$.

However, suppose $\mathcal{F}$ is a coherent sheaf satisfying the
Generic Vanishing conditions, then $V^1(\mathcal{F})$ is a closed
subset of dimension 0 or empty by Theorem \ref{main_PP11}.

We should notice that in this example, although $\Supp
R^1\hat{S}(\Omega)$ is not closed, the support of the image of
$R^1\hat{S}(\Omega)\to R^1\hat{S}(\Omega_e)$ is the set of
$p^e$-torsion points which is closed for any $e>0$. \qed
\end{example}

It should be noticed that in the previous example,
$\{R^i\hat{S}(\Omega_e)\}$ satisfies the Mittag-Leffler condition
for any $i$. We will see that this is not valid in general in the
following examples.

\begin{example} \label{notML_notCar}
Let $A$ be an elliptic curve. Let $\hat{0}\in\hat{A}$ correspond to
the trivial line bundle on $A$. Let
$W_e=\mathcal{O}_{\hat{A}}(-e\cdot \hat{0})$ and $\psi_e:W_{e+1}\to
W_e$ be the inclusion. Clearly, the inverse system of coherent
sheaves $\{W_e\}$ does not satisfy the Mittag-Leffler condition.
Since the $W_e$ are antiample, $R^0S(W_e)=0$. Let
$\Omega_e=R^1S(W_e)=RS(W_e)[1]$. Notice that we have short exact
sequences $$0\to W_{e+1} \to W_e \to k(\hat{0})\to 0.$$ The
Fourier-Mukai transform induces the following long exact sequences
$$\xymatrix{
R^0S(W_e) \ar[r]\ar@{=}[d] & R^0S(k(\hat{0})) \ar[r]\ar@{=}[d] &
R^1S(W_{e+1}) \ar[r]\ar@{=}[d] & R^1S(W_e) \ar[r]\ar@{=}[d] &
R^1S(k(\hat{0})) \ar@{=}[d]\\
0 & \mathcal{O}_A & \Omega_{e+1} & \Omega_e & 0. }$$ In particular,
$\Omega_{e+1}\to \Omega_e$ is surjective. Thus $\{\Omega_e\}$
satisfies the Mittag-Leffler condition. On the other hand,
$R^0\hat{S}(\Omega_e)=(-1_{\hat{A}})^*W_e$ does not satisfy the
Mittag-Leffler condition.

We claim that $\{\Omega_e\}$ above is a GV-inverse system of
coherent sheaves. Indeed,
$$R\hat{S}(D_A(\Omega_e))=(-1_{\hat{A}})^*D_{\hat{A}}(R\hat{S}(\Omega_e))[-1]=
D_{\hat{A}}(W_e)[-1]=\mathcal{O}_{\hat{A}}(e\cdot \hat{0}).$$ In
particular, $R^{-1}\hat{S}(D_A(\Omega_e))=0$. Taking the direct
limit, we conclude that $\mathcal{H}^{-1}(\Lambda)=0$.

We first calculate the codimensions of the supports. Since
$R^1\hat{S}(\Omega_e)=0$ for any $e$, it is clear that $\varprojlim
R^1\hat{S}(\Omega_e)=0$. Since $\{\Omega_e\}$ satisfies the
Mittag-Leffler condition, by Theorem \ref{commute_projlim}, we have
the following exact sequence
$$0\to R^1\varprojlim R^0\hat{S}(\Omega_e) \to R^1\hat{S}(\Omega)\to
\varprojlim R^1\hat{S}(\Omega_e)\to 0.$$ Thus $R^1\hat{S}(\Omega)
\cong R^1\varprojlim R^0\hat{S}(\Omega_e)\cong
(-1_{\hat{A}})^*R^1\varprojlim W_e$. Consider the following exact
sequence of inverse systems
$$0\to W_e \to \mathcal{O}_{\hat{A}} \to k[t]/(t^e) \to 0.$$ We have long
exact sequence $$0\to \varprojlim W_e \to \mathcal{O}_{\hat{A}} \to
\varprojlim k[t]/(t^e) \to R^1\varprojlim W_e \to R^1\varprojlim
\mathcal{O}_{\hat{A}}=0,$$ where the last equation follows by Lemma
\ref{ML_vs_projlim}. We conclude that $R^1\varprojlim W_e$ is the
skyscraper sheaf $k[[t]]/k[t]$ at $\hat{0}$. Hence
$$\Supp R^1\hat{S}(\Omega)=\{\hat{0}\}$$ and $$\Supp (\im
R^1\hat{S}(\Omega)\to R^1\hat{S}(\Omega_e))=\emptyset.$$ By Theorem
\ref{commute_projlim}, it is clear that
$R^0\hat{S}(\Omega)=\varprojlim
R^0\hat{S}(\Omega_e)=(-1)^*\varprojlim W_e$. Hence $$\Supp
R^0\hat{S}(\Omega)=\hat{A}-\{\hat{0}\}$$ and $$\Supp (\im
R^0\hat{S}(\Omega)\to R^0\hat{S}(\Omega_e))=\hat{A}-\{\hat{0}\}.$$

We now calculate the cohomology support loci. Let $\alpha\in
\hat{A}$ and $P_\alpha\in \Pic^0(A)$ be the corresponding
topologically trivial line bundle. We have
$$\begin{array}{r@{\;\;\;\cong\;\;\;}l}
H^i(A, \Omega_e\otimes P_\alpha) & H^i(A, R^1S(W_e)\otimes P_\alpha)
\\
& H^{i+1}(A, RS(W_e)\otimes P_\alpha) \\
& H^{i+1}(\hat{A}, W_e\otimes R\hat{S}(P_\alpha)) \\
& H^i(\hat{A}, W_e\otimes R^1\hat{S}(P_\alpha)) \\
& H^i(\hat{A}, W_e\otimes k(-\alpha)),
\end{array}$$
where the third isomorphism is by Proposition
\ref{FM_vs_projformula}. Hence $H^0(A,\Omega_e\otimes P_\alpha)\cong
k$ and $H^1(A,\Omega_e\otimes P_\alpha)=0$. Taking the inverse
limit, we have $H^i(A,\Omega\otimes P_\alpha)=\varprojlim
H^i(A,\Omega_e\otimes P_\alpha)=k(-\alpha)$ if $i=0$ and $\alpha\neq
\hat{0}$, and $H^i(A,\Omega\otimes P_\alpha)=0$ otherwise. We
conclude that $$V^1(\Omega)=\emptyset$$ and
$$V^0(\Omega)=\hat{A}-\{\hat{0}\}.$$
In particular, $$V^0(\Omega)=\Supp R^0\hat{S}(\Omega)=\Supp (\im
R^0\hat{S}(\Omega)\to R^0\hat{S}(\Omega_e))=\hat{A}-\{\hat{0}\}$$
are not countable unions of closed subsets.

We should notice that $V^1(\Omega)\nsupseteq \Supp
R^1\hat{S}(\Omega)$. If $\mathcal{F}$ is a coherent sheaf, then it
is a consequence of cohomology and base change that
$V^i(\mathcal{F})\supseteq \Supp R^i\hat{S}(\mathcal{F})$. \qed

\end{example}

We can easily modify Example \ref{notML_notCar} to obtain a Cartier
module.

\begin{example} \label{notML_Car}
Let $A$ be a supersingular elliptic curve. Let $\{W_e\}$ and
$\{\Omega_e\}$ be the same as in Example \ref{notML_notCar}. Let us
consider the inverse system $\{\Omega_{p^e}\}$. Notice that since
$A$ is supersingular,
$$W_{p^e}=\mathcal{O}_{\hat{A}}(-p^e\cdot \hat{0})=V^{*,e}(W_1),$$
where the Verschiebung $V:\hat{A}\to\hat{A}$ is the dual of the
Frobenius. We have
$$\Omega_{p^e}\cong RS(W_{p^e})[1]\cong RS(V^{*,e}(W_1))[1]\cong F_*^e(RS(W_1))[1]\cong F_*^e\Omega_{1},$$
where the third isomorphism is by Proposition \ref{FM_vs_pullback}.
Thus, $\{\Omega_{p^e}\}$ is a Cartier module. The calculation of
inverse limits remains unchanged as in Example \ref{notML_notCar}.
In particular, $V^0(\Omega)=\hat{A}-\{\hat{0}\}$ is not a countable
union of closed subvarieties. This gives a negative answer to
\cite[Question 3.20]{HP}. \qed
\end{example}

The following example shows that the chain of inclusions fails for
GV-inverse system of coherent sheaves.

\begin{example}\label{chain_failure}

Let $A$ be an elliptic curve. Let $\Omega_0=\mathcal{O}_A$ and
$\Omega_{e+1}$ be the non-splitting extension of $\mathcal{O}_A$ and
$\Omega_e$,
$$0\to \mathcal{O}_A \to \Omega_{e+1}\to \Omega_e\to 0.$$
Then $H^0(A,\Omega_e)\cong H^1(A,\Omega_e)\cong k$ for any
$e\geqslant 0$. Since $\Omega_{e+1}\to\Omega_e$ is surjective, the
inverse system $\{\Omega_e\}$ satisfies the Mittag-Leffler
condition. It is easy to check that
$$R^{-1}\hat{S}(D_A(\Omega_e))=0$$ for any $e\geqslant 0$ by induction. Hence,
$\{\Omega_e\}$ is a GV-inverse system.

We now compute the cohomology support loci $V^0(\Omega)$ and
$V^1(\Omega)$. Suppose $P_\alpha\in \Pic^0(A)$ and $P_\alpha\neq
\mathcal{O}_A$. By the long exact sequence,
\begin{eqnarray*}
\lefteqn{0 \to H^0(A, P_\alpha) \to H^0(A, \Omega_{e+1}\otimes
P_\alpha) \to H^0(A, \Omega_e\otimes
P_\alpha)}\\
& & \to H^1(A, P_\alpha) \to H^1(A, \Omega_{e+1}\otimes P_\alpha)
\to H^1(A, \Omega_e\otimes P_\alpha) \to 0,\end{eqnarray*} we have
that $$H^0(A, \Omega_e\otimes P_\alpha)=H^1(A, \Omega_e\otimes
P_\alpha)=0$$ for any $e\geqslant 0$. Thus, we only need to consider
whether $\hat{0}\in\hat{A}$ is in the cohomology support loci of
$\Omega$.

We have the following exact sequence,
\begin{eqnarray*} \lefteqn{0
\to H^0(A, \mathcal{O}_A) \to H^0(A, \Omega_{e+1}) \to H^0(A,
\Omega_e)}\\
& & \xrightarrow{\simeq} H^1(A, \mathcal{O}_A) \to H^1(A,
\Omega_{e+1})\to H^1(A, \Omega_e) \to 0,\end{eqnarray*} where the
isomorphism is by the assumption that the extension of
$\mathcal{O}_A$ and $\Omega_e$ is non-splitting. Hence, $H^0(A,
\Omega_{e+1}) \to H^0(A, \Omega_e)$ is zero and $H^1(A,
\Omega_{e+1}) \to H^1(A, \Omega_e)$ is an isomorphism for any
$e\geqslant 0$. Taking the inverse limit, we obtain that
$$H^0(A, \Omega)=H^0(A, \varprojlim \Omega_e)\cong \varprojlim
H^0(A,\Omega_e)=0,$$ and
$$H^1(A, \Omega)=H^1(A, \varprojlim \Omega_e)\cong \varprojlim
H^1(A,\Omega_e)\cong H^1(A,\Omega_0)\cong k.$$ Thus,
$V^0(\Omega)=\emptyset$ and $V^1(\Omega)=\{\hat{0}\}$. The chain of
inclusions fails.

When $A$ is supersingular, by \cite[Lemma 4.12]{HST}, we have that
$F^e_*\omega_A\cong \Omega_{p^e-1}$. The nontrivial map
$\Omega_{p^e-1}\to \Omega_0$ induces $F^e_*\omega_A\to \omega_A$,
which is isomorphic to the trace map up to a scale. Hence, the
inverse system $\{\Omega_{p^e-1}\}$ is a Cartier module and is the
same as Example \ref{exmpl_HP}. \qed

\end{example}

\section{Main Theorem}

We will prove Theorem \ref{main} in this section.

\subsection{WIT versus limit of Kodaira vanishing}

\begin{theorem}\label{WIT=limKV}
Let $A$ be an abelian variety of dimension $g$. Let $\{\Omega_e\}$
be an inverse system of coherent sheaves on $A$ satisfying the
Mittag-Leffler condition and let $\Omega=\varprojlim \Omega_e$. Let
$\Lambda_e=R\hat{S}(D_A(\Omega_e))$ and $\Lambda=\hocolim\Lambda_e$.
The following are equivalent:
\begin{enumerate}
\item For any ample line bundle $L$ on $\hat{A}$,
$H^i(A,\Omega\otimes \hat{L}^\vee)=0$ for any $i>0$.
\item For any non-negative integer $e$ and any ample line bundle
$L$ on $\hat{A}$, there exists an integer $m(e,L)$ such that for any
$m\geq m(e,L)$, the natural map $$H^i(A,\Omega\otimes
\widehat{mL}^\vee)\to H^i(A,\Omega_e\otimes \widehat{mL}^\vee)$$ is
zero for any $i>0$.
\item $\mathcal{H}^i(\Lambda)=0$ for $i\neq 0$.
\end{enumerate}
\end{theorem}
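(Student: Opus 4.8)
The plan is to prove the cycle of implications $(1)\Rightarrow(2)\Rightarrow(3)\Rightarrow(1)$, using the Fourier--Mukai dictionary to convert cohomological vanishing on $A$ into statements about the cohomology sheaves of $\Lambda$. First I would unwind the key identity. By Proposition \ref{FM_vs_projformula} applied with $\mathcal{E}=D_A(\Omega_e)$ and $\mathcal{E}'=L$, together with Grothendieck Duality (Theorem \ref{GroDuality}) on the map $A\to\operatorname{Spec}k$, one expresses $H^i(\hat A,\Lambda_e\otimes L)$ in terms of $H^{g-i}(A,\Omega_e\otimes\widehat{L}^\vee)^\vee$ (up to the usual shifts $[-g]$ and sign $(-1_A)^*$ coming from Proposition \ref{FM_inverse}); concretely $\Lambda_e = R\hat S(D_A(\Omega_e))$ and $R\hat S(\Omega_e)$ are Grothendieck--Serre dual to each other on $\hat A$. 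For a \emph{sufficiently ample} $L$ on $\hat A$, the sheaf $\hat L = R^0\hat S(L) = R\hat S(L)$ is a vector bundle on $A$, and $\widehat{L}^\vee = RS(L^\vee)[g]$ up to sign; this is the standard input from \cite{PP11} that converts "tensor with $\hat L^\vee$ and take cohomology" into "apply $\mathcal{H}^i$ to $\Lambda_e$ and then take hypercohomology against $L^\vee$''. The upshot I want is: $H^i(A,\Omega_e\otimes\widehat{mL}^\vee)$ vanishes for all $i>0$ and all $m\gg0$ if and only if $\mathcal{H}^i(\Lambda_e)$... — but of course $\Lambda_e$ is \emph{not} a sheaf in degree $0$ in general, so the real content is about the colimit $\Lambda$, and this is where the Mittag--Leffler hypothesis and the homotopy-colimit machinery enter.

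For $(1)\Rightarrow(2)$: fix $e$ and an ample $L$. Since $\{\Omega_{e'}\}$ satisfies Mittag--Leffler, the pro-system stabilizes, so for $e'\gg e$ the map $\Omega_{e'}\to\Omega_e$ factors through $\Omega=\varprojlim\Omega_{e''}$ up to the stable image; more precisely the map $\Omega\to\Omega_e$ has the same image as $\Omega_{e'}\to\Omega_e$ for $e'$ large, hence $H^i(A,\Omega\otimes\widehat{mL}^\vee)\to H^i(A,\Omega_e\otimes\widehat{mL}^\vee)$ factors through $H^i(A,\Omega_{e'}\otimes\widehat{mL}^\vee)$ for suitable intermediate terms. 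Combining this with hypothesis $(1)$ (vanishing of the source after replacing $L$ by $mL$, using that $\widehat{mL}$ stays a bundle and $mL$ stays ample) forces the map to be zero once $m\geq m(e,L)$. I would be a little careful here: the cleanest route is to note $H^i(A,\Omega\otimes\widehat{mL}^\vee)=0$ directly for $i>0$ by $(1)$, so the map is trivially zero, and the only role of $m(e,L)$ is to guarantee $\widehat{mL}$ is still the locally free Fourier--Mukai transform — so $(1)\Rightarrow(2)$ is essentially immediate and the integer $m(e,L)$ is a red herring in this direction.

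For $(2)\Rightarrow(3)$ and $(3)\Rightarrow(1)$: this is the heart. Using $\Lambda=\hocolim\Lambda_e$ and the fact (the displayed Lemma after the definition of $\hocolim$) that $\mathcal{H}^i$ and $R^j\Gamma$ commute with $\hocolim$, one has $\mathcal{H}^i(\Lambda)=\varinjlim\mathcal{H}^i(\Lambda_e)$ and $H^j(\hat A,\Lambda\otimes L^\vee)=\varinjlim H^j(\hat A,\Lambda_e\otimes L^\vee)$. Translating via Fourier--Mukai and Grothendieck duality, $H^j(\hat A,\Lambda_e\otimes L^\vee)$ is dual to $H^{g-j}(A,\Omega_e\otimes\widehat{mL}^\vee)$-type groups, and after passing to the colimit the Mittag--Leffler condition plus Theorem \ref{commute_projlim} let one replace $\varinjlim$ of these by the corresponding group for $\Omega=\varprojlim\Omega_e$ (modulo an $R^1\varprojlim$ term that vanishes by Lemma \ref{ML_vs_projlim}). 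Then I would argue: condition $(3)$, $\mathcal{H}^i(\Lambda)=0$ for $i\neq0$, is equivalent via the standard cohomology-and-base-change / Fourier--Mukai spectral sequence $H^p(\hat A,\mathcal{H}^q(\Lambda)\otimes L^\vee)\Rightarrow H^{p+q}(\hat A,\Lambda\otimes L^\vee)$, together with the vanishing $H^p(\hat A,\mathcal{H}^q(\Lambda)\otimes L^\vee)=0$ for $p>0$ and $L^\vee$ sufficiently negative (Serre/Fujita, since $\mathcal{H}^q(\Lambda)$ is coherent — here I use that $\Lambda$ has coherent cohomology, coming from \cite[Section 2.1]{HP}), to the vanishing of all $H^{i}(\hat A,\Lambda\otimes L^\vee)$ for $i\neq0$, which in turn is exactly condition $(1)$ after the duality translation. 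The spectral sequence argument is where I would invoke Lemma \ref{spec_seq_zero_map} if a "zero map'' rather than a "zero object'' formulation is needed, but for the plain equivalence $(1)\Leftrightarrow(3)$ the collapsing of the spectral sequence for $L^\vee$ very negative suffices in both directions.

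The main obstacle I anticipate is controlling the interaction between the homotopy colimit $\hocolim\Lambda_e$ and the inverse limit $\Omega=\varprojlim\Omega_e$: a priori $\mathcal{H}^i(\Lambda)=\varinjlim\mathcal{H}^i(\Lambda_e)$ knows only about the \emph{colimit}, whereas conditions $(1)$ and $(2)$ refer to $\Omega$ itself. Bridging this gap is precisely the purpose of the Mittag--Leffler hypothesis: it guarantees $R^1\varprojlim=0$ (Lemma \ref{ML_vs_projlim}) so that Theorem \ref{commute_projlim} gives $H^i(A,\Omega\otimes\widehat{mL}^\vee)\cong\varprojlim H^i(A,\Omega_e\otimes\widehat{mL}^\vee)$, and dually a stability statement lets the colimit on the $\Lambda$-side "see'' the inverse limit on the $\Omega$-side. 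Making this compatibility precise — in particular checking that the duality isomorphisms are compatible with the transition maps in the two systems, so that $\varinjlim$ on one side really matches $\varprojlim$ on the other — is the step I expect to require the most care, and is likely where the genuine work of the proof lies.
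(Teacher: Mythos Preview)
Your outline for $(1)\Rightarrow(2)$ and for $(3)\Rightarrow(1)$ is essentially correct and matches the paper (for $(3)\Rightarrow(1)$ no Serre vanishing is needed: once $\mathcal{H}^i(\Lambda)=0$ for $i\neq0$ the hypercohomology spectral sequence collapses automatically, and $H^{-i}(\hat A,\mathcal{H}^0(\Lambda)\otimes L)=0$ for $i>0$ just because there is no cohomology in negative degree). The genuine gap is in $(2)\Rightarrow(3)$.

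The problem is your assertion that ``$\mathcal{H}^q(\Lambda)$ is coherent''. It is not: $\Lambda=\hocolim\Lambda_e$ lives in $D_{qc}(\hat A)$, and $\mathcal{H}^q(\Lambda)=\varinjlim\mathcal{H}^q(\Lambda_e)$ is in general only quasi-coherent (in Example~\ref{exmpl_HP} it is an infinite direct sum of skyscraper sheaves). Hence your appeal to Serre vanishing for $\mathcal{H}^q(\Lambda)\otimes L$ has no justification, and the spectral-sequence argument you sketch cannot be run in the direction ``hypercohomology vanishing $\Rightarrow$ $\mathcal{H}^i(\Lambda)=0$''. Relatedly, your proposal never actually uses hypothesis $(2)$: you treat $(2)\Rightarrow(3)$ as if it were $(1)\Rightarrow(3)$, and the freedom to increase $m$ depending on $e$ plays no role.

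The paper circumvents exactly this obstruction. It argues by contradiction: take the smallest $j<0$ with $\mathcal{H}^j(\Lambda)\neq0$, fix $e$ so that $\mathcal{H}^j(\Lambda_e)\to\mathcal{H}^j(\Lambda)$ has nonzero image, and then choose $L$ ample enough that (a) Serre vanishing holds for the \emph{coherent} sheaves $\mathcal{H}^l(\Lambda_e)$, (b) $\mathcal{H}^j(\Lambda_e)\otimes L$ is globally generated, and (c) the map in hypothesis $(2)$ is zero for this $e$ and $L$. Comparing the two spectral sequences for $\Lambda_e\otimes L$ and $\Lambda\otimes L$ (one degenerates by (a), the other because $\mathcal{H}^l(\Lambda)=0$ for $l<j$ by minimality of $j$) and using (b) forces $R^j\Gamma(\Lambda_e\otimes L)\to R^j\Gamma(\Lambda\otimes L)$ to be nonzero; dualizing via Mittag--Leffler, this contradicts (c). So the real ``main obstacle'' is not the $\varprojlim$/$\varinjlim$ compatibility you flag at the end (that part is handled routinely by Theorem~\ref{commute_projlim} and $k$-linear duality), but the non-coherence of $\mathcal{H}^q(\Lambda)$, which forces the comparison with a fixed $\Lambda_e$ and is precisely where hypothesis $(2)$ earns its keep.
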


\begin{proof}
$(1)\Rightarrow (2)$. Obvious.

$(2)\Rightarrow (3)$. It is shown in \cite[Theorem 3.1.1]{HP} that
$\mathcal{H}^i(\Lambda)=0$ when $i<-g$ or $i>0$. Thus we may pick
$j<0$ as the smallest integer such that $\mathcal{H}^j(\Lambda)\neq
0$. Since $\mathcal{H}^j(\Lambda)=\hocolim\mathcal{H}^j(\Lambda_e)$,
we may fix $e>0$ such that the image of $\mathcal{H}^j(\Lambda_e)\to
\mathcal{H}^j(\Lambda)$ is non-zero. Let $L$ be a sufficiently large
multiple of a fixed ample line bundle on $\hat{A}$ such that

{\renewcommand{\labelenumi}{(\roman{enumi})}

\begin{enumerate}
\item $\mathcal{H}^j(\Lambda_e)\otimes L$ is globally generated,
\item $H^i(\hat{A},\mathcal{H}^l(\Lambda_e)\otimes L)=0$ for $i>0$
and $l\in[-g,0]$, and
\item $H^i(A,\Omega\otimes \hat{L}^\vee)\to H^i(A,\Omega_e\otimes
\hat{L}^\vee)$ is zero for any $i\neq 0$.
\end{enumerate}}

Notice that (i) and (ii) can be achieved by Serre Vanishing and
(iii) can be achieved by the hypothesis in condition (2).

Using Grothendieck's spectral sequence, we have
$$\xymatrix{
E_{2,e}^{i,l}=R^i\Gamma(\mathcal{H}^l(\Lambda_e)\otimes L)
\ar@2{->}[r] \ar[d] & R^{i+l}\Gamma(\Lambda_e\otimes L) \ar[d] \\
E_2^{i,l}=R^i\Gamma(\mathcal{H}^l(\Lambda)\otimes L) \ar@2{->}[r] &
R^{i+l}\Gamma(\Lambda\otimes L), }$$ where the vertical arrows are
compatible by the functoriality of the spectral sequence. By our
choice of $j$, we have that $E_2^{i,l}=0$ for all $l<j$. By (ii),
$E_{2,e}^{i,l}=0$ for any $i\neq 0$. Hence, the spectral sequence
degenerates to the following commutative diagram:
$$\xymatrix{
R^0\Gamma(\mathcal{H}^j(\Lambda_e)\otimes L)
\ar^-{\simeq}[r] \ar[d] & R^j\Gamma(\Lambda_e\otimes L) \ar[d] \\
R^0\Gamma(\mathcal{H}^j(\Lambda)\otimes L) \ar^-{\simeq}[r] &
R^j\Gamma(\Lambda\otimes L). }$$ By (i), the image of
$R^0\Gamma(\mathcal{H}^j(\Lambda_e)\otimes L)\to
R^0\Gamma(\mathcal{H}^j(\Lambda)\otimes L)$ is non-zero. Hence, the
image of $R^j\Gamma(\Lambda_e\otimes L)\to R^j\Gamma(\Lambda\otimes
L)$ is non-zero.

On the other hand, we have
$$\begin{array}{r@{\;\;\;\cong\;\;\;}l}
D_k(R^j\Gamma(\Lambda\otimes L)) & D_k(\varinjlim
R^j\Gamma(\Lambda_e\otimes L)) \\
& \varprojlim D_kR^j\Gamma(\Lambda_e\otimes L) \\
& \varprojlim D_kR^j\Gamma(R\hat{S}(D_A(\Omega_e))\otimes L) \\
& \varprojlim D_kR^j\Gamma(D_A(\Omega_e\otimes \hat{L}^\vee)) \\
& \varprojlim R^{-j}\Gamma(D_A(D_A(\Omega_e\otimes \hat{L}^\vee)))
\\
& \varprojlim R^{-j}\Gamma(\Omega_e\otimes \hat{L}^\vee),
\end{array}$$ and similarly,
$D_k(R^j\Gamma(\Lambda_e\otimes L))\cong
R^{-j}\Gamma(\Omega_e\otimes \hat{L}^\vee)$. Since the inverse
system $\{\Omega_e\}$ satisfies the Mittag-Leffler condition, we
have
$$H^i(A,\Omega\otimes \hat{L}^\vee)=H^i(A,\varprojlim
\Omega_e\otimes \hat{L}^\vee)\cong\varprojlim H^i(A,\Omega_e\otimes
\hat{L}^\vee)$$ for any $i$. Thus by (iii), $\varprojlim
R^{-j}\Gamma(\Omega_e\otimes \hat{L}^\vee)\to
R^{-j}\Gamma(\Omega_e\otimes \hat{L}^\vee)$ is zero. Hence,
$D_k(R^j\Gamma(\Lambda\otimes L)) \to D_k(R^j\Gamma(\Lambda_e\otimes
L))$ is zero, a contradiction.

$(3)\Rightarrow (1)$. Recall that we have the following spectral
sequence,
$$H^i(\hat{A},\mathcal{H}^l(\Lambda)\otimes L) \Rightarrow
R^{i+l}\Gamma(\hat{A},\Lambda\otimes L). $$ Since
$\mathcal{H}^i(\Lambda)=0$ for any $i\neq 0$, the spectral sequence
degenerates to $$H^i(\hat{A},\mathcal{H}^0(\Lambda)\otimes L)\cong
R^i\Gamma(\hat{A}, \Lambda\otimes L).$$ If $i>0$, then by the
isomorphism shown in the previous step,
$$H^i(A, \Omega\otimes \hat{L}^\vee)\cong D_k(R^{-i}\Gamma(\hat{A},
\Lambda\otimes L))\cong
D_k(H^{-i}(\hat{A},\mathcal{H}^0(\Lambda)\otimes L))=0.$$
\end{proof}

\subsection{WIT versus the supports of $R^i\hat{S}(\Omega)$}

\begin{theorem} \label{WITtoGV}
Let $\{\Omega_e\}$ be an inverse system of coherent sheaves on a
$g$-dimensional abelian variety satisfying the Mittag-Leffler
condition and let $\Omega=\varprojlim \Omega_e$. Let
$\Lambda_e=R\hat{S}(D_A(\Omega_e))$ and $\Lambda=\hocolim\Lambda_e$.
If $\mathcal{H}^j(\Lambda)=0$ for any $j\neq 0$, then for any
scheme-theoretic point $P$ with $\dim P>i$, we have $$P\notin \Supp
(\im(R^i\hat{S}(\Omega)\to R^i\hat{S}(\Omega_e)))$$ for any $e\geq
0$. Moreover, if $\mathcal{H}^0(\Lambda)$ is torsion-free, then for
any scheme-theoretic point $P$ with $\dim P\geq i$, we have
$$P\notin \Supp (\im(R^i\hat{S}(\Omega)\to R^i\hat{S}(\Omega_e)))$$
for any $e\geq 0$.
\end{theorem}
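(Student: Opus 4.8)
The plan is to reduce the statement to a duality computation on $\hat A$ together with a dimension count on local cohomology, exactly in the spirit of the classical argument that Generic Vanishing (in the WIT form) controls the codimension of the transform. Fix a scheme-theoretic point $P \in A$ with $\dim \overline{\{P\}} = d$, and write $\mathfrak{q}$ for the corresponding prime ideal; I want to show that if $d > i$ then the localization of $\im(R^i\hat S(\Omega) \to R^i\hat S(\Omega_e))$ at $P$ vanishes, and if $\mathcal{H}^0(\Lambda)$ is torsion-free then the same holds already for $d \ge i$. The starting observation is that by Grothendieck duality (Theorem \ref{GroDuality}) and Proposition \ref{FM_vs_dual}, $R\hat S(\Omega_e)$ and $D_{\hat A}(\Lambda_e) = D_{\hat A}(R\hat S(D_A(\Omega_e)))$ differ only by $(-1_{\hat A})^*$ and a shift by $g$; so $R^i\hat S(\Omega_e) \cong \mathcal{E}xt^{g-i}_{\hat A}(\Lambda_e, \omega_{\hat A})$ up to the sign automorphism (using that $\Lambda_e$ is represented by a genuine complex and $D_{\hat A}$ computes via local Ext). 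The hypothesis $\mathcal{H}^j(\Lambda) = 0$ for $j \ne 0$, passed through $\hocolim$, pins down the transform of the inverse limit in the same way after taking the colimit over $e$.

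The key step is then the following local-algebra input: if $M$ is a finitely generated module over the regular local ring $\mathcal{O}_{\hat A, P}$ of dimension $g - d$, then $\mathcal{E}xt^{j}_{\mathcal{O}_{\hat A,P}}(N, \cdot)$ against a fixed module can only be nonzero for $j \le g-d$, and the extreme value $j = g-d$ detects exactly the non-torsion-free / finite-length behaviour. Concretely, I would argue: the image $I_e := \im(R^i\hat S(\Omega) \to R^i\hat S(\Omega_e))$ is a subsheaf of $R^i\hat S(\Omega_e)$, hence a subquotient of $\mathcal{E}xt^{g-i}_{\hat A}(\Lambda_e,\omega_{\hat A})$; but from $\mathcal{H}^j(\Lambda)=0$ for $j\ne 0$ the colimit side forces any section of $I_e$ over a neighbourhood of $P$ to factor through a complex concentrated in the single degree computing $\mathcal{H}^0$, and a local-to-global spectral sequence on $\mathcal{O}_{\hat A,P}$ then shows this contribution lives in $\mathcal{E}xt^{g-i}$ of a module of projective dimension $\le g - d < g - i$ — impossible unless the stalk is zero. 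For the refined statement, torsion-freeness of $\mathcal{H}^0(\Lambda)$ upgrades "projective dimension $\le g-d$" to "$< g-d$" at points $P$ that are not the generic point, shaving off one more degree and yielding the bound $d \ge i$ rather than $d > i$.

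The main obstacle I anticipate is bookkeeping around the homotopy colimit: $R^i\hat S(\Omega)$ is \emph{not} literally $\hocolim R^i\hat S(\Omega_e)$ — rather, $\Lambda = \hocolim \Lambda_e$, and one must use the Mittag-Leffler hypothesis together with Theorem \ref{commute_projlim} (applied with $T = \hat S$) to relate $R^i\hat S(\Omega)$, $\varprojlim R^i\hat S(\Omega_e)$, and an $R^1\varprojlim$ term, and then transpose all of this through Grothendieck duality, where $\varprojlim$ turns into $\hocolim$ and $\varinjlim$ turns into $\holim$. Getting the degree shifts and the sign automorphism $(-1_{\hat A})^*$ to line up, and checking that the image sheaf $I_e$ really is controlled by the degree-$0$ cohomology of $\Lambda$ rather than by the full complex, is where the care is needed; everything else is a standard Auslander–Buchsbaum / local duality dimension count on the regular variety $\hat A$. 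I would structure the write-up so that the two assertions share all of this setup and differ only in the final invocation of torsion-freeness.
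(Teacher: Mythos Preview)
Your overall plan is the paper's plan: dualize via Proposition~\ref{FM_vs_dual} to identify $(-1_{\hat A})^*R^l\hat S(\Omega_e)$ with $\mathcal{E}xt^l(\Lambda_e,\mathcal{O}_{\hat A})$, observe that the map from $R^l\hat S(\Omega)$ to $R^l\hat S(\Omega_e)$ factors through $\mathcal{H}^l(\holim R\hat S(\Omega_e))\cong(-1_{\hat A})^*\mathcal{E}xt^l(\Lambda,\mathcal{O}_{\hat A})$, and then control this $\mathcal{E}xt$ using that $\Lambda$ is concentrated in degree $0$ together with the fact that the regular local ring $\mathcal{O}_{\hat A,P}$ has global dimension $g-d$. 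Two remarks.

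First, a bookkeeping correction you anticipated: the identification is $R^l\hat S(\Omega_e)\cong(-1_{\hat A})^*\mathcal{E}xt^{\,l}(\Lambda_e,\mathcal{O}_{\hat A})$, not $\mathcal{E}xt^{\,g-l}$ against $\omega_{\hat A}$; the shift by $g$ in $D_{\hat A}(-)=R\mathcal{H}om(-,\mathcal{O}_{\hat A}[g])$ cancels against the shift in Proposition~\ref{FM_vs_dual}. With this fixed, your first assertion reads: for $l>g-d$ one has $\mathcal{E}xt^l(\Lambda,\mathcal{O}_{\hat A})_P=\mathcal{E}xt^l(\mathcal{H}^0(\Lambda),\mathcal{O}_{\hat A})_P=0$ since $\mathcal{O}_{\hat A,P}$ has global dimension $g-d$. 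That is correct, and is in fact slightly more direct than the paper, which runs the same conclusion through the spectral-sequence comparison Lemma~\ref{spec_seq_zero_map} already here.

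Second, and this is the genuine gap: your argument for the torsion-free refinement does not go through as written. You invoke an Auslander--Buchsbaum-type statement to say that torsion-freeness of $\mathcal{H}^0(\Lambda)$ forces its projective dimension at $P$ below $g-d$, hence $\mathcal{E}xt^{g-d}(\mathcal{H}^0(\Lambda),\mathcal{O}_{\hat A})_P=0$. But $\mathcal{H}^0(\Lambda)=\varinjlim\mathcal{H}^0(\Lambda_e)$ is only quasi-coherent, and Auslander--Buchsbaum requires finite generation; for a non-finitely-generated torsion-free module there is no reason for the top $\mathcal{E}xt$ to vanish. The paper does \emph{not} claim the source vanishes. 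Instead it shows the \emph{map}
\[
\mathcal{E}xt^{g-d}(\mathcal{H}^0(\Lambda),\mathcal{O}_{\hat A})_P\longrightarrow\mathcal{E}xt^{g-d}(\mathcal{H}^0(\Lambda_e),\mathcal{O}_{\hat A})_P
\]
is zero: writing $\mathcal{T}\subset\mathcal{H}^0(\Lambda_e)$ for the torsion subsheaf and $\mathcal{F}$ for the torsion-free quotient, one has $\mathcal{E}xt^{g-d}(\mathcal{F},\mathcal{O}_{\hat A})_P=\mathcal{E}xt^{g-d+1}(\mathcal{F},\mathcal{O}_{\hat A})_P=0$ by the \emph{coherent} depth argument of \cite[Lemma~2.9]{PPIII}, so the target is identified with $\mathcal{E}xt^{g-d}(\mathcal{T},\mathcal{O}_{\hat A})_P$; but the inducing map $\mathcal{T}\hookrightarrow\mathcal{H}^0(\Lambda_e)\to\mathcal{H}^0(\Lambda)$ is zero since the last sheaf is torsion-free. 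Finally, to pass from this $E_2$-level vanishing to the abutment map $\mathcal{E}xt^{g-d}(\Lambda,\mathcal{O}_{\hat A})_P\to\mathcal{E}xt^{g-d}(\Lambda_e,\mathcal{O}_{\hat A})_P$ one genuinely needs the spectral-sequence comparison Lemma~\ref{spec_seq_zero_map}, because $\Lambda_e$ is not concentrated in a single degree. This last step is what your sketch is missing for the second assertion.
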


\begin{proof}
Fix a scheme-theoretic point $P\in \hat{A}$ such that $\dim P=d$.
Since localization at $P$ is exact, we have the following
commutative diagram of spectral sequences:

$$\xymatrix{
\mathcal{E}xt^i(\mathcal{H}^j(\Lambda),\mathcal{O}_{\hat{A}})_P
\ar@2{->}[r]\ar^{\phi_2^{i,-j}}[d] &
\mathcal{E}xt^{i-j}(\Lambda,\mathcal{O}_{\hat{A}})_P \ar^{\phi^{i-j}}[d] \\
\mathcal{E}xt^i(\mathcal{H}^j(\Lambda_e),\mathcal{O}_{\hat{A}})_P
\ar@2{->}[r] & \mathcal{E}xt^
{i-j}(\Lambda_e,\mathcal{O}_{\hat{A}})_P. }$$

Since $\dim P=d$, by \cite[III.6.8 and III.6.10A]{H77},
$$\mathcal{E}xt^i(\mathcal{H}^j(\Lambda_e),\mathcal{O}_{\hat{A}})_P\cong
Ext^i_{\mathcal{O}_{\hat{A},P}}(\mathcal{H}^j(\Lambda_e)_P,\mathcal{O}_{\hat{A},P})=0,$$
when $i>g-d$. Let $l=i-j>g-d$ and $a=l-1$. When $i\leq a$, we have
$j=i-l<0$, hence
$\mathcal{E}xt^i(\mathcal{H}^j(\Lambda),\mathcal{O}_{\hat{A}})_P=0$.
When $i>a$, we have $i\geq l>g-d$, hence
$\mathcal{E}xt^i(\mathcal{H}^j(\Lambda_e),\mathcal{O}_{\hat{A}})_P=0$.
We may apply Lemma \ref{spec_seq_zero_map} and obtain that the
natural map $\mathcal{E}xt^l(\Lambda,\mathcal{O}_{\hat{A}})_P\to
\mathcal{E}xt^l(\Lambda_e,\mathcal{O}_{\hat{A}})_P$ is zero when
$l>g-d$.

It is easy to see that
$$\begin{array}{cl}
 & \mathcal{E}xt^l(\Lambda_e,\mathcal{O}_{\hat{A}}) \cong
\mathcal{H}^{l-g}(D_{\hat{A}}(\Lambda_e)) \cong
\mathcal{H}^{l-g}(D_{\hat{A}}(R\hat{S}(D_A(\Omega_e)))) \\
\!\!\!\cong\!\!\! & \mathcal{H}^{l-g}((-1_{\hat{A}})^*
R\hat{S}(D_A(D_A(\Omega_e)))[g]) \cong
\mathcal{H}^l((-1_{\hat{A}})^* R\hat{S}(\Omega_e)) \\
\!\!\!\cong\!\!\! &
(-1_{\hat{A}})^*R^l\hat{S}(\Omega_e)\end{array}$$ and
$$\begin{array}{cl}
 & \mathcal{E}xt^l(\Lambda,\mathcal{O}_{\hat{A}}) \cong
\mathcal{H}^{l-g}(D_{\hat{A}}(\Lambda)) \cong
\mathcal{H}^{l-g}(D_{\hat{A}}(\hocolim R\hat{S}(D_A(\Omega_e))))
\\ \!\!\!\cong\!\!\! & \mathcal{H}^{l-g}(\holim
D_{\hat{A}}(R\hat{S}(D_A(\Omega_e)))) \cong \mathcal{H}^l(\holim
(-1_{\hat{A}})^*R\hat{S}(\Omega_e)) \\
\!\!\!\cong\!\!\! & (-1_{\hat{A}})^*\mathcal{H}^l(\holim
R\hat{S}(\Omega_e)).
\end{array}$$ Then for any $d$-dimensional point $P$, $$\mathcal{H}^l(\holim
R\hat{S}(\Omega_e))_P \to R^l\hat{S}(\Omega_e)_P$$ is zero for any
$l>g-d$. Notice that $R^l\hat{S}(\Omega)_P\to
R^l\hat{S}(\Omega_e)_P$ factors as
$$R^l\hat{S}(\Omega)_P = \mathcal{H}^l(R\hat{S}(\varprojlim
\Omega_e))_P\to \mathcal{H}^l(\holim R\hat{S}(\Omega_e))_P \to
\mathcal{H}^l(R\hat{S}(\Omega_e))_P = R^l\hat{S}(\Omega_e)_P.$$ We
conclude that $R^l\hat{S}(\Omega)_P\to R^l\hat{S}(\Omega_e)_P$ is
zero when $l>g-\dim P$. The first part of the proposition follows
from the exactness of localization at $P$.

If $\mathcal{H}^0(\Lambda)$ is torsion-free, we only need to check
the case that $l=i-j=g-d$. Let $a=l$. When $i<a$, we have that
$j=i-l<0$, hence
$\mathcal{E}xt^i(\mathcal{H}^j(\Lambda),\mathcal{O}_{\hat{A}})_P=0$.
When $i>a$, we have $i>l=g-d$, hence
$\mathcal{E}xt^i(\mathcal{H}^j(\Lambda_e),\mathcal{O}_{\hat{A}})_P=0$.
To apply Lemma \ref{spec_seq_zero_map}, we only need to check that
$$\mathcal{E}xt^{g-d}(\mathcal{H}^0(\Lambda),\mathcal{O}_{\hat{A}})_P\to
\mathcal{E}xt^{g-d}(\mathcal{H}^0(\Lambda_e),\mathcal{O}_{\hat{A}})_P$$
is zero. Let $\mathcal{T}$ be the torsion part of
$\mathcal{H}^0(\Lambda_e)$ and $\mathcal{F}\cong
\mathcal{H}^0(\Lambda_e)/\mathcal{T}$. We have the following exact
sequence,
$$\mathcal{E}xt^{g-d}(\mathcal{F},\mathcal{O}_{\hat{A}})_P\to
\mathcal{E}xt^{g-d}(\mathcal{H}^0(\Lambda_e),\mathcal{O}_{\hat{A}})_P\to
\mathcal{E}xt^{g-d}(\mathcal{T},\mathcal{O}_{\hat{A}})_P\to
\mathcal{E}xt^{g-d+1}(\mathcal{F},\mathcal{O}_{\hat{A}})_P.$$ Since
$\mathcal{F}$ is torsion-free, by the argument in \cite[Lemma
2.9]{PPIII}, we have that
$$\mathcal{E}xt^{g-d}(\mathcal{F},\mathcal{O}_{\hat{A}})_P=
\mathcal{E}xt^{g-d+1}(\mathcal{F},\mathcal{O}_{\hat{A}})_P=0.$$
Hence
$$\mathcal{E}xt^{g-d}(\mathcal{H}^0(\Lambda_e),\mathcal{O}_{\hat{A}})_P\cong
\mathcal{E}xt^{g-d}(\mathcal{T},\mathcal{O}_{\hat{A}})_P.$$ We only
need to show that
$$\mathcal{E}xt^{g-d}(\mathcal{H}^0(\Lambda),\mathcal{O}_{\hat{A}})_P\to
\mathcal{E}xt^{g-d}(\mathcal{T},\mathcal{O}_{\hat{A}})_P$$ is zero.
Notice that this map is induced by $\mathcal{T}\to
\mathcal{H}^0(\Lambda_e)\to \mathcal{H}^0(\Lambda)$ where
$\mathcal{T}$ is a torsion sheaf and $\mathcal{H}^0(\Lambda)$ is
torsion-free. Thus $\mathcal{T}\to \mathcal{H}^0(\Lambda)$ is zero.
Hence,
$\mathcal{E}xt^{g-d}(\mathcal{H}^0(\Lambda),\mathcal{O}_{\hat{A}})_P\to
\mathcal{E}xt^{g-d}(\mathcal{T},\mathcal{O}_{\hat{A}})_P$ is zero.
\end{proof}

\subsection{The case when $\{R^i\hat{S}(\Omega_e)\}$ satisfies the Mittag-Leffler condition}

In this section, we will consider the Mittag-Leffler condition on
the Fourier-Mukai transform of the inverse system $\{\Omega_e\}$. We
are able to recover Theorem \ref{main_PP11} fully in this setting.
However, we remind the reader that even when $\{\Omega_e\}$ is a
Cartier module, the inverse system $\{R^i\hat{S}(\Omega_e)\}$ does
not necessarily satisfy the Mittag-Leffler condition (see Example
\ref{notML_Car}).

\begin{proposition} \label{closed}
For any $0\leqslant i\leqslant g$, if $\{R^i\hat{S}(\Omega_e)\}$
satisfies the Mittag-Leffler condition, then the support of $\im
(R^i\hat{S}(\Omega)\to R^i\hat{S}(\Omega_e))$ is closed for any
$e\geqslant 0$.
\end{proposition}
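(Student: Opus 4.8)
The plan is to identify the image $\im\bigl(R^i\hat S(\Omega)\to R^i\hat S(\Omega_e)\bigr)$ with a coherent sheaf manufactured out of the Mittag--Leffler hypothesis, rather than to compute it directly.

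First I would apply Theorem~\ref{commute_projlim} with $T=\hat S$. Since $\{\Omega_e\}$ satisfies the Mittag--Leffler condition there is a short exact sequence
\[0\to R^1\varprojlim R^{i-1}\hat S(\Omega_e)\to R^i\hat S(\Omega)\to \varprojlim R^i\hat S(\Omega_e)\to 0,\]
so $R^i\hat S(\Omega)\to \varprojlim R^i\hat S(\Omega_e)$ is surjective and the natural map $R^i\hat S(\Omega)\to R^i\hat S(\Omega_e)$ factors through it. Hence $\im\bigl(R^i\hat S(\Omega)\to R^i\hat S(\Omega_e)\bigr)$ is exactly the image of the projection $\pi_e\colon \varprojlim R^i\hat S(\Omega_e)\to R^i\hat S(\Omega_e)$, and it suffices to prove this image is coherent; note $R^i\hat S(\Omega_e)$ is itself coherent, being a cohomology sheaf of $Rp_{\hat{A},*}(p_A^*\Omega_e\otimes P)$, a proper pushforward of a coherent complex.

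Next I would use the Mittag--Leffler condition on $\{R^i\hat S(\Omega_e)\}$: for each $e$ there is $N_e$ such that the coherent subsheaf $\mathcal I_e:=\im\bigl(R^i\hat S(\Omega_{e'})\to R^i\hat S(\Omega_e)\bigr)\subseteq R^i\hat S(\Omega_e)$ is independent of $e'$ for $e'\ge N_e$. Composing transition maps shows that $R^i\hat S(\Omega_{e+1})\to R^i\hat S(\Omega_e)$ carries $\mathcal I_{e+1}$ onto $\mathcal I_e$, so $\{\mathcal I_e\}$ is an inverse subsystem of $\{R^i\hat S(\Omega_e)\}$ with \emph{surjective} transition maps. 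Restricting to an affine chart $\operatorname{Spec}R\subseteq\hat A$ and using that on affines the image of a map of coherent sheaves is the module image, one checks every local section of $\varprojlim R^i\hat S(\Omega_e)$ has its $e$-th component in $\mathcal I_e$, giving $\im\pi_e\subseteq\mathcal I_e$. For the reverse inclusion it is enough that $\varprojlim\mathcal I_e\to\mathcal I_e$ be surjective, since $\varprojlim\mathcal I_e\subseteq\varprojlim R^i\hat S(\Omega_e)$; surjectivity of a map of sheaves is checked on a basis, and over a basic affine open $D(f)$ of a chart $\operatorname{Spec}R$ with $\mathcal I_{e'}|_{\operatorname{Spec}R}=\widetilde{M_{e'}}$ the maps $(M_{e+1})_f\to (M_e)_f$ are surjective, so any element of $(M_e)_f$ lifts step by step to a compatible system in $\varprojlim_{e'}(M_{e'})_f=(\varprojlim\mathcal I_{e'})(D(f))$ with no shrinking of $D(f)$ needed. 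Thus $\im\pi_e=\mathcal I_e$ is coherent and its support is closed.

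The one genuinely delicate point — and the reason the hypothesis on $\{R^i\hat S(\Omega_e)\}$ cannot be dropped — is that $\varprojlim R^i\hat S(\Omega_e)$ is the naive sheaf-theoretic inverse limit, which need be neither coherent nor even quasi-coherent (in Example~\ref{notML_notCar} its support is the open set $\hat A\setminus\{\hat 0\}$), so there is no abstract ``image of a quasi-coherent sheaf in a coherent sheaf is coherent'' shortcut. The Mittag--Leffler hypothesis is precisely what permits replacing $\{R^i\hat S(\Omega_e)\}$ by the system of stable images $\{\mathcal I_e\}$, whose surjective transitions force $\varprojlim$ to surject onto each term; this passage, together with checking that the relevant squares commute so that $\im\pi_e$ really computes the image in question, is where the content lies, while reducing image computations to affine charts is routine.
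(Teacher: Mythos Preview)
Your proof is correct and follows essentially the same route as the paper: both use Theorem~\ref{commute_projlim} to replace $\im\bigl(R^i\hat S(\Omega)\to R^i\hat S(\Omega_e)\bigr)$ by $\im\bigl(\varprojlim R^i\hat S(\Omega_e)\to R^i\hat S(\Omega_e)\bigr)$, and then invoke the Mittag--Leffler hypothesis to identify this with the stable image $\im\bigl(R^i\hat S(\Omega_d)\to R^i\hat S(\Omega_e)\bigr)$ for $d\gg 0$, which is coherent. The paper simply asserts that the stable image coincides with the image of the projection from the inverse limit, whereas you spell out this standard fact carefully (passing to the surjective subsystem $\{\mathcal I_e\}$ and checking surjectivity of $\varprojlim\mathcal I_e\to\mathcal I_e$ on basic affines); your closing remarks about why no quasi-coherence shortcut is available are also apt.
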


\begin{proof}
Since $\{\Omega_e\}$ satisfies the Mittag-Leffler condition, by
Theorem \ref{commute_projlim}, the natural map
$$R^i\hat{S}(\Omega)\to \varprojlim R^i\hat{S}(\Omega_e)$$ is
surjective. Thus, we have
$$\im(R^i\hat{S}(\Omega)\to R^i\hat{S}(\Omega_e))=
\im(\varprojlim R^i\hat{S}(\Omega_e)\to R^i\hat{S}(\Omega_e)).$$
Since we assume $\{R^i\hat{S}(\Omega_e)\}$ satisfies the
Mittag-Leffler condition, the image of $$R^i\hat{S}(\Omega_d)\to
R^i\hat{S}(\Omega_e)$$ stabilizes when $d$ is sufficiently large.
The stable image coincides with $$\im(\varprojlim
R^i\hat{S}(\Omega_e)\to R^i\hat{S}(\Omega_e)).$$ Since
$R^i\hat{S}(\Omega_d)$ and $R^i\hat{S}(\Omega_e)$ are both coherent,
the proposition follows.
\end{proof}

By the proposition above, we are able to talk about the codimension
of the support of $\im (R^i\hat{S}(\Omega)\to
R^i\hat{S}(\Omega_e))$.




We are able to recover the missing implication in Theorem
\ref{main}.

\begin{theorem} \label{GVtolimKV}
Let $A$ be an abelian variety. Let $\{\Omega_e\}$ be an inverse
system of coherent sheaves on $A$ satisfying the Mittag-Leffler
condition and let $\Omega=\varprojlim \Omega_e$. Let
$\Lambda_e=R\hat{S}(D_A(\Omega_e))$ and $\Lambda=\hocolim\Lambda_e$.
Suppose that the inverse system $\{R^i\hat{S}(\Omega_e)\}$ satisfies
the Mittag-Leffler condition for all $i$ and
$$\codim \Supp (\im(R^i\hat{S}(\Omega)\to
R^i\hat{S}(\Omega_e)))\geq i,$$ for any $0\leqslant i\leqslant g$
and $e$ sufficiently large. Then for any ample line bundle $L$ on
$\hat{A}$, we have $H^i(A,\Omega\otimes \hat{L}^\vee)=0$ for any
$i>0$.
\end{theorem}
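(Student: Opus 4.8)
The plan is to verify condition (3) of Theorem \ref{WIT=limKV}, namely $\mathcal{H}^i(\Lambda)=0$ for every $i\neq 0$; the conclusion then follows from the implication $(3)\Rightarrow(1)$ proved there. The vanishing for $i>0$ is \cite[Theorem 3.1.1]{HP}, so the real content is the vanishing for $i<0$, and this is where the codimension hypothesis and the Mittag-Leffler condition on $\{R^i\hat{S}(\Omega_e)\}$ enter.

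I would first rewrite $\mathcal{H}^i(\Lambda)$ pointwise. By the duality computation in the proof of Theorem \ref{WITtoGV} one has $D_{\hat{A}}(\Lambda_e)\cong(-1_{\hat{A}})^*R\hat{S}(\Omega_e)[g]$, and since $\Lambda_e\in D_c(\hat{A})$, applying $D_{\hat{A}}$ once more gives $\mathcal{H}^i(\Lambda_e)\cong(-1_{\hat{A}})^*\mathcal{E}xt^i(R\hat{S}(\Omega_e),\mathcal{O}_{\hat{A}})$ (hyperext of the complex $R\hat{S}(\Omega_e)$). Since $\hocolim$ is exact, commutes with $\mathcal{H}^i$, and commutes with $(-1_{\hat{A}})^*$, this yields
$$\mathcal{H}^i(\Lambda)\cong(-1_{\hat{A}})^*\hocolim_e\ \mathcal{E}xt^i\bigl(R\hat{S}(\Omega_e),\mathcal{O}_{\hat{A}}\bigr).$$
Next I would pass the hyperext spectral sequence
$$E_2^{p,q}(e)=\mathcal{E}xt^p\bigl(R^{-q}\hat{S}(\Omega_e),\mathcal{O}_{\hat{A}}\bigr)\ \Rightarrow\ \mathcal{E}xt^{p+q}\bigl(R\hat{S}(\Omega_e),\mathcal{O}_{\hat{A}}\bigr)$$
through this colimit: its $E_2$-terms are ordinary sheaf $\mathcal{E}xt$'s, $p$ ranges over $[0,g]$, and the abutment in each degree has a finite filtration whose graded pieces are subquotients of the $E_2$-terms. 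A degree-$i$ term with $i<0$ only receives contributions from $E_2^{p,q}(e)$ with $p+q=i$, $p\ge0$, $-q\ge0$, which forces $0\le p<-q$; hence, writing $j=-q$, it is enough to prove
$$\hocolim_e\ \mathcal{E}xt^p\bigl(R^j\hat{S}(\Omega_e),\mathcal{O}_{\hat{A}}\bigr)=0\qquad\text{for }0\le p<j.$$

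This vanishing is the heart of the argument, and is where the Mittag-Leffler condition on $\{R^j\hat{S}(\Omega_e)\}$ is essential. By (the proof of) Proposition \ref{closed}, for $e$ large and $e''\gg e$ the transition map $R^j\hat{S}(\Omega_{e''})\to R^j\hat{S}(\Omega_e)$ factors as
$$R^j\hat{S}(\Omega_{e''})\twoheadrightarrow M^j_e\hookrightarrow R^j\hat{S}(\Omega_e),\qquad M^j_e:=\im\bigl(R^j\hat{S}(\Omega)\to R^j\hat{S}(\Omega_e)\bigr),$$
and by hypothesis $M^j_e$ is a coherent subsheaf with $\codim\Supp M^j_e\ge j$. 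Applying $\mathcal{E}xt^p(-,\mathcal{O}_{\hat{A}})$, the induced transition map $\mathcal{E}xt^p(R^j\hat{S}(\Omega_e),\mathcal{O}_{\hat{A}})\to\mathcal{E}xt^p(R^j\hat{S}(\Omega_{e''}),\mathcal{O}_{\hat{A}})$ factors through $\mathcal{E}xt^p(M^j_e,\mathcal{O}_{\hat{A}})$, which vanishes for $p<j$ because $\hat{A}$ is smooth and $\codim\Supp M^j_e\ge j$ (as in \cite[Lemma 2.9]{PPIII}; cf.\ \cite[III.6.8 and III.6.10A]{H77}). Thus in the direct system $\{\mathcal{E}xt^p(R^j\hat{S}(\Omega_e),\mathcal{O}_{\hat{A}})\}_e$ every transition map is eventually zero, so its colimit vanishes. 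This proves $\mathcal{H}^i(\Lambda)=0$ for $i<0$; together with the case $i>0$ it gives condition (3) of Theorem \ref{WIT=limKV}, and $(3)\Rightarrow(1)$ yields $H^i(A,\Omega\otimes\hat{L}^\vee)=0$ for all $i>0$ and all ample line bundles $L$ on $\hat{A}$.

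The step I expect to carry the weight is this factorization through $M^j_e$. For a fixed $e$ the sheaf $\mathcal{E}xt^p(R^j\hat{S}(\Omega_e),\mathcal{O}_{\hat{A}})$ need not vanish (already $R^0\hat{S}(\Omega_e)$ may have full support, cf.\ Example \ref{notML_notCar}), and it is precisely the Mittag-Leffler condition on $\{R^j\hat{S}(\Omega_e)\}$ that forces the transition maps inside the homotopy colimit to route through a coherent sheaf whose support has the right codimension, so that the colimit annihilates them. Without that hypothesis this line of argument breaks down, which is why the theorem is stated under it.
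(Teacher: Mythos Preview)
Your argument is correct, and it takes a genuinely different route from the paper's own proof.

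The paper argues directly on the side of condition~(1). Working with an ample line bundle $L$ on $\hat{A}$, it uses the hypercohomology spectral sequence
\[
H^p\bigl(\hat{A},\,R^q\hat{S}(\Omega)\otimes L^\vee\bigr)\ \Longrightarrow\ H^{p+q}\bigl(\hat{A},\,R\hat{S}(\Omega)\otimes L^\vee\bigr),
\]
and shows that the $E_2$-terms with $p+q>g$ vanish by factoring each transition map
\[
H^p\bigl(\hat{A},R^q\hat{S}(\Omega)\otimes L^\vee\bigr)\to H^p\bigl(\hat{A},R^q\hat{S}(\Omega_e)\otimes L^\vee\bigr)
\]
through $H^p(\hat{A},\im_{q,e}\otimes L^\vee)=0$ (since $\dim\Supp\im_{q,e}\leq g-q<p$), and then invoking the Mittag--Leffler hypotheses together with Theorem~\ref{commute_projlim} to conclude that the inverse limit, hence the $E_2$-term itself, is zero. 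A final application of Proposition~\ref{FM_vs_projformula} and Proposition~\ref{FM_vs_dual} converts the vanishing on $\hat{A}$ into the desired vanishing of $H^i(A,\Omega\otimes\hat{L}^\vee)$.

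Your approach instead establishes condition~(3) of Theorem~\ref{WIT=limKV} and then quotes $(3)\Rightarrow(1)$. The technical core---factoring the transition maps through the stable image $M^j_e$ and invoking the codimension bound to kill an obstruction group---is the same idea, but you run it \emph{locally} with sheaf-$\mathcal{E}xt$'s rather than globally with $H^p(-\otimes L^\vee)$, and you take a \emph{direct} limit (of $\mathcal{E}xt^p(R^j\hat{S}(\Omega_e),\mathcal{O}_{\hat{A}})$) rather than an inverse limit. This is arguably cleaner: exactness of filtered colimits lets you pass the hyperext spectral sequence through the limit without any $R^1\varprojlim$ bookkeeping, and the key local vanishing $\mathcal{E}xt^p(M^j_e,\mathcal{O}_{\hat{A}})=0$ for $p<j$ is the standard fact on a smooth variety (your citation of \cite[Lemma~2.9]{PPIII} is appropriate). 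The paper's route, by contrast, is more self-contained in that it does not appeal back to Theorem~\ref{WIT=limKV}, and it makes the role of the ample $L$ explicit throughout. Both approaches use the Mittag--Leffler assumption on $\{R^i\hat{S}(\Omega_e)\}$ in exactly the same place: to identify the stable image with $M^j_e$ so that the codimension hypothesis can be applied.
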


\begin{proof}
To simplify our notation, we denote
$$\im_{i,e}=\im(R^i\hat{S}(\Omega)\to R^i\hat{S}(\Omega_e)).$$
Let $p$ and $q$ be two non-negative integers satisfying $p+q>g$ and
$L$ be any ample line bundle on $\hat{A}$. The homomorphism
$$H^p(\hat{A},R^q\hat{S}(\Omega)\otimes L^\vee)\to H^p(\hat{A},R^q\hat{S}(\Omega_e)\otimes
L^\vee)$$ factors through $H^p(\hat{A},\im_{q,e}\otimes L^\vee)$.
Since we assume that $$\codim \Supp (\im_{q,e})\geqslant q>g-p,$$
the cohomology $$H^p(\hat{A},\im_{q,e}\otimes L^\vee)=0.$$ Thus,
$$H^p(\hat{A},R^q\hat{S}(\Omega)\otimes L^\vee)\to
H^p(\hat{A},R^q\hat{S}(\Omega_e)\otimes L^\vee)$$ is the zero map.
Since $\{R^{q-1}\hat{S}(\Omega_e)\}$ satisfies the Mittag-Leffler
condition, by Theorem \ref{commute_projlim}, we have
$$R^q\hat{S}(\Omega)\cong \varprojlim R^q\hat{S}(\Omega_e).$$
Then by the Mittag-Leffler condition of $\{R^q\hat{S}(\Omega_e)\}$,
we have the following isomorphism
$$H^p(\hat{A},R^q\hat{S}(\Omega)\otimes
L^\vee)\cong H^p(\hat{A},\varprojlim R^q\hat{S}(\Omega_e)\otimes
L^\vee)\cong \varprojlim H^p(\hat{A},R^q\hat{S}(\Omega_e)\otimes
L^\vee).$$ Combining with the zero map above, we obtain that the
natural maps $$\varprojlim H^p(\hat{A},R^q\hat{S}(\Omega_e)\otimes
L^\vee) \to H^p(\hat{A},R^q\hat{S}(\Omega_e)\otimes L^\vee)$$ are
all zero for any $e$ sufficiently large. By the universal property
of inverse limits, we conclude that
$$H^p(\hat{A},R^q\hat{S}(\Omega)\otimes L^\vee)\cong
\varprojlim H^p(\hat{A},R^q\hat{S}(\Omega_e)\otimes L^\vee)=0.$$

Consider the following spectral sequence
$$H^p(\hat{A},R^q\hat{S}(\Omega)\otimes L^\vee)\Rightarrow H^{p+q}(\hat{A},R\hat{S}(\Omega)\otimes
L^\vee).$$ By the discussion above,
$H^p(\hat{A},R^q\hat{S}(\Omega)\otimes L^\vee)=0$ if $p+q>g$. Hence,
$$H^l(\hat{A},R\hat{S}(\Omega)\otimes L^\vee)=0$$ for any $l>g$.
We apply Theorem \ref{commute_projlim} with
$T=\Gamma(\hat{S}(\bullet)\otimes L^\vee)$ and get
$$\varprojlim H^l(\hat{A},R\hat{S}(\Omega_e)\otimes L^\vee)=H^l(\hat{A},R\hat{S}(\Omega)\otimes
L^\vee)=0.$$

Since $\Omega_e$ and $L^\vee$ are both coherent, we may apply
Proposition \ref{FM_vs_projformula} and obtain
$$\begin{array}{r@{\;\cong\;}l}
H^l(\hat{A},R\hat{S}(\Omega_e)\otimes L^\vee) &
H^l(A,\Omega_e\otimes RS(L^\vee))\\
& H^l(A,\Omega_e\otimes RS(D_{\hat{A}}(L)[-g])) \\
& H^{l-g}(A,\Omega_e\otimes RS(D_{\hat{A}}(L))) \\
& H^{l-g}(A,\Omega_e\otimes (-1_A)^*D_A(RS(L))[-g]) \\
& H^{l-g}(A,\Omega_e\otimes (-1_A)^*\hat{L}^\vee),
\end{array}$$
where the fourth isomorphism is by Proposition \ref{FM_vs_dual}.
Taking the inverse limit, we have
$$0=\varprojlim H^l(\hat{A},R\hat{S}(\Omega_e)\otimes L^\vee)\cong
\varprojlim H^{l-g}(A,\Omega_e\otimes (-1_A)^*\hat{L}^\vee)\cong
H^{l-g}(A,\Omega\otimes (-1_A)^*\hat{L}^\vee),$$ for any $l>g$. The
theorem follows.

\end{proof}

\section{Applications}

Let $A$ be an abelian variety of dimension $g$. We say that a sheaf
$\mathcal{F}$ on $A$ satisfies IT$_i$ for some $0\leqslant
i\leqslant g$ if $H^j(A,\mathcal{F}\otimes P_\alpha)=0$ for any
$j\neq i$ and $P_\alpha\in \Pic^0(A)$. For example, if $H$ is an
ample line bundle, then $H$ satisfies IT$_0$.

We prove the following preservation of vanishing as a generalization
of \cite[Proposition 3.1 and Theorem 3.2]{PPIII}.

\begin{proposition} \label{tensor_IT0}
Let $\{\Omega_e\}$ be a GV-inverse system of coherent sheaves. Let
$H$ be a locally free sheaf satisfying IT$_0$. Then $\Omega\otimes
H$ satisfies IT$_0$.
\end{proposition}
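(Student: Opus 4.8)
The plan is to transplant Pareschi and Popa's proof of preservation of vanishing for a single GV-sheaf (\cite[Proposition~3.1]{PPIII}) to the present setting, carrying out the Fourier--Mukai manipulations on the coherent sheaves $\Omega_e$ and passing to the inverse limit only at the very end. First I would reduce to proving $H^i(A,\Omega\otimes H\otimes P_\alpha)=0$ for every $i>0$ and every $P_\alpha\in\Pic^0(A)$, the groups in negative degree vanishing because $\Omega\otimes H\otimes P_\alpha$ is an ordinary (quasi-coherent) sheaf. Since $H$ is locally free, $H\otimes P_\alpha$ is locally free and still satisfies IT$_0$, so by cohomology and base change $\mathcal{M}_\alpha:=R^0\hat{S}(H\otimes P_\alpha)=R\hat{S}(H\otimes P_\alpha)$ is locally free on $\hat{A}$, and Fourier--Mukai inversion (Proposition~\ref{FM_inverse}) together with the symmetry $RS\circ(-1_{\hat{A}})^*\cong(-1_A)^*\circ RS$ of the transform produces an isomorphism $RS(\mathcal{N}_\alpha)\cong(H\otimes P_\alpha)[-g]$, where $\mathcal{N}_\alpha:=(-1_{\hat{A}})^*\mathcal{M}_\alpha$ is locally free. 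Feeding this into the projection formula (Proposition~\ref{FM_vs_projformula}, which holds verbatim for $\mathcal{E}\in D_{qc}(A)$ once $\mathcal{E}'$ is locally free) gives, functorially in any quasi-coherent sheaf $\mathcal{G}$ on $A$,
$$H^i(A,\mathcal{G}\otimes H\otimes P_\alpha)\;\cong\;H^{i+g}\bigl(\hat{A},\,R\hat{S}(\mathcal{G})\otimes\mathcal{N}_\alpha\bigr),$$
and I will use this both for $\mathcal{G}=\Omega$ and for $\mathcal{G}=\Omega_e$.

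The heart of the argument is to show that for each $i>0$ and each $e$ the natural map $H^i(A,\Omega\otimes H\otimes P_\alpha)\to H^i(A,\Omega_e\otimes H\otimes P_\alpha)$ vanishes. Via the displayed isomorphism this is the map $H^{i+g}(\hat{A},R\hat{S}(\Omega)\otimes\mathcal{N}_\alpha)\to H^{i+g}(\hat{A},R\hat{S}(\Omega_e)\otimes\mathcal{N}_\alpha)$ induced by $R\hat{S}(\Omega)\to R\hat{S}(\Omega_e)$, and I would compare the two hypercohomology spectral sequences $H^p(\hat{A},R^q\hat{S}(-)\otimes\mathcal{N}_\alpha)\Rightarrow H^{p+q}(\hat{A},R\hat{S}(-)\otimes\mathcal{N}_\alpha)$. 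On $E_2$-pages the map $H^p(\hat{A},R^q\hat{S}(\Omega)\otimes\mathcal{N}_\alpha)\to H^p(\hat{A},R^q\hat{S}(\Omega_e)\otimes\mathcal{N}_\alpha)$ factors through $H^p(\hat{A},\im(R^q\hat{S}(\Omega)\to R^q\hat{S}(\Omega_e))\otimes\mathcal{N}_\alpha)$, because tensoring the factorization $R^q\hat{S}(\Omega)\twoheadrightarrow\im\hookrightarrow R^q\hat{S}(\Omega_e)$ with the flat sheaf $\mathcal{N}_\alpha$ preserves it. Since $\{\Omega_e\}$ is a GV-inverse system we have $\mathcal{H}^j(\Lambda)=0$ for $j\neq 0$, so Theorem~\ref{WITtoGV} applies and yields $\codim\Supp\im(R^q\hat{S}(\Omega)\to R^q\hat{S}(\Omega_e))\geqslant q$; Grothendieck vanishing then kills that last group whenever $p+q>g$. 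Hence the morphism of spectral sequences is zero on $E_2^{p,q}$ for all $p+q>g$, therefore (passing to subquotients) on every $E_r^{p,q}$ and on $E_\infty^{p,q}$ for $p+q>g$, and since the abutment filtrations are finite a morphism of filtered objects inducing zero on all graded pieces of $H^n$ for $n>g$ is itself zero; taking $n=i+g$ gives the claim. This spectral-sequence comparison is where the hypothesis is used, and it is also the main obstacle: $\Omega$ is not coherent, there is no generic vanishing bound on $\Supp R^q\hat{S}(\Omega)$ itself (cf.\ Example~\ref{notML_notCar}), only on the images $\im(R^q\hat{S}(\Omega)\to R^q\hat{S}(\Omega_e))$, which is precisely why one must route the argument through these transition maps.

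Finally I would assemble the conclusion. Because $H\otimes P_\alpha$ is locally free, $\{\Omega_e\otimes H\otimes P_\alpha\}$ satisfies the Mittag--Leffler condition and has inverse limit $\Omega\otimes H\otimes P_\alpha$; moreover each $H^{i-1}(A,\Omega_e\otimes H\otimes P_\alpha)$ is a finite-dimensional $k$-vector space, so the inverse system $\{H^{i-1}(A,\Omega_e\otimes H\otimes P_\alpha)\}_e$ is automatically Mittag--Leffler and its $R^1\varprojlim$ vanishes by Lemma~\ref{ML_vs_projlim}. Theorem~\ref{commute_projlim} applied with $T=\Gamma$ then gives $H^i(A,\Omega\otimes H\otimes P_\alpha)\cong\varprojlim_e H^i(A,\Omega_e\otimes H\otimes P_\alpha)$, an isomorphism under which the natural maps to the individual $H^i(A,\Omega_e\otimes H\otimes P_\alpha)$ become the structure projections of the inverse limit. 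For $i>0$ all these projections are zero by the previous paragraph, so the inverse limit is zero; hence $H^i(A,\Omega\otimes H\otimes P_\alpha)=0$ for every $i>0$ and every $\alpha$, which is exactly the assertion that $\Omega\otimes H$ satisfies IT$_0$.
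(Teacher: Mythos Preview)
Your argument has a genuine gap at the Grothendieck-vanishing step. You assert that Theorem~\ref{WITtoGV} gives $\codim\Supp\im(R^q\hat S(\Omega)\to R^q\hat S(\Omega_e))\geqslant q$ and then kill $H^p(\hat A,\im\otimes\mathcal N_\alpha)$ for $p+q>g$. But Theorem~\ref{WITtoGV} only says that each \emph{scheme-theoretic point} of the support has bounded dimension; it says nothing about the dimension of the closure of the support, and that closure is what Grothendieck vanishing needs. The paper warns explicitly in the introduction that this support ``is usually not closed (see Example~\ref{notML_notCar}),'' and Example~\ref{notML_Car} shows the same phenomenon for Cartier modules. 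The image sheaf is a subsheaf of the coherent $R^q\hat S(\Omega_e)$, but since $R^q\hat S(\Omega)$ need not be quasi-coherent (again Example~\ref{notML_notCar}, where $R^0\hat S(\Omega)\cong(-1)^*\varprojlim W_e$ has support $\hat A\setminus\{\hat 0\}$), the image need not be coherent and its support need not be closed. Your spectral-sequence comparison is in fact the engine of Theorem~\ref{GVtolimKV}, and there it is run \emph{only} under the extra Mittag--Leffler hypothesis on $\{R^i\hat S(\Omega_e)\}$, precisely so that Proposition~\ref{closed} makes the support closed; without that hypothesis the paper leaves the implication $(3)\Rightarrow(1)$ open, and your proof is implicitly trying to use it.

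The paper's own proof avoids condition~(3) entirely and uses condition~(2) directly. It dualizes over $k$ to turn the inverse limit into a direct limit, applies Grothendieck duality termwise to rewrite $D_kH^i(A,\Omega_e\otimes H\otimes P_\alpha)$ as $H^{-i}(A,D_A(\Omega_e)\otimes(H\otimes P_\alpha)^\vee)$, passes to the homotopy colimit, and then invokes the Fourier--Mukai equivalence on $\Hom$ (Proposition~\ref{FM_vs_Ext}) together with the WIT hypothesis $\Lambda=\mathcal H^0(\Lambda)$: this identifies the group with $H^{-i}(\hat A,\mathcal H^0(\Lambda)\otimes(R^0\hat S(H\otimes P_\alpha))^\vee)$, which vanishes for $i>0$ simply because it is negative-degree cohomology of a sheaf. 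No control on the supports of $R^q\hat S(\Omega)$ or of the transition images is needed.
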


\begin{proof}
Consider any $\alpha\in \Pic^0(A)$. Since $H$ satisfies IT$_0$, it
follows that $R\hat{S}(H\otimes \alpha)=R^0\hat{S}(H\otimes \alpha)$
is a locally free sheaf on $\hat{A}$. Since $\{\Omega_e\}$ satisfies
the Mittag-Leffler condition, we have
$$\begin{array}{r@{\;\cong\;}l}
H^i(A,\Omega\otimes H\otimes \alpha) & \varprojlim
H^i(A,\Omega_e\otimes
H\otimes \alpha) \\
& D_k (\varinjlim D_k (H^i(A,\Omega_e\otimes H\otimes \alpha)))\\
& D_k (\varinjlim H^{-i}(A,D_A(\Omega_e)\otimes (H\otimes
\alpha)^\vee)) \\
& D_k (H^{-i}(A,\hocolim D_A(\Omega_e)\otimes
(H\otimes \alpha)^\vee)) \\
& D_k (\Ext^{-i}(H\otimes \alpha,\hocolim
D_A(\Omega_e))) \\
& D_k (\Ext^{-i}(R^0\hat{S}(H\otimes
\alpha),\mathcal{H}^0(\Lambda))) \\
& D_k (H^{-i}(\hat{A},\mathcal{H}^0(\Lambda)\otimes
(R^0\hat{S}(H\otimes
\alpha))^\vee)) \\
& 0,
\end{array}$$
when $i>0$, where the sixth isomorphism is by Proposition
\ref{FM_vs_Ext}.
\end{proof}

\begin{proposition}
Let $\{\Omega_e\}$ be a GV-inverse system of coherent sheaves. Let
$\mathcal{E}$ be a locally free coherent sheaf satisfying the
Generic Vanishing conditions in the sense of \cite{PP11}. Then
$\{\Omega_e\otimes \mathcal{E}\}$ is a GV-inverse system.
\end{proposition}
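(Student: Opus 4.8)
The plan is to verify condition (1') of Theorem \ref{main} for the inverse system $\{\Omega_e\otimes\mathcal{E}\}$; since conditions (1), (1') and (2) there are equivalent, this shows $\{\Omega_e\otimes\mathcal{E}\}$ is a GV-inverse system. First I would record the bookkeeping: because $\mathcal{E}$ is locally free of finite rank, the functor $-\otimes\mathcal{E}$ is exact and commutes with inverse limits, so $\varprojlim(\Omega_e\otimes\mathcal{E})=(\varprojlim\Omega_e)\otimes\mathcal{E}=\Omega\otimes\mathcal{E}$; moreover $\im(\Omega_{e'}\otimes\mathcal{E}\to\Omega_e\otimes\mathcal{E})=\im(\Omega_{e'}\to\Omega_e)\otimes\mathcal{E}$ stabilizes, so $\{\Omega_e\otimes\mathcal{E}\}$ again satisfies the Mittag-Leffler condition.

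The crucial point is the following elementary observation: if $\mathcal{E}$ is locally free and satisfies the Generic Vanishing conditions, then for every sufficiently ample line bundle $L$ on $\hat{A}$ the sheaf $\mathcal{E}\otimes\hat{L}^\vee$ is locally free and satisfies IT$_0$. Local freeness is clear since $\hat{L}$ is a locally free sheaf. For the IT$_0$ property, note that for any $P_\alpha\in\Pic^0(A)$ the sheaf $\hat{L}^\vee\otimes P_\alpha$ is the dual of the Fourier-Mukai transform of a translate $t_\alpha^*L$ of $L$, which is again sufficiently ample; hence condition (1) of Theorem \ref{main_PP11} applied to $\mathcal{E}$ and $t_\alpha^*L$ gives $H^i(A,\mathcal{E}\otimes\hat{L}^\vee\otimes P_\alpha)=0$ for all $i>0$ and all $\alpha$.

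I would then feed this into Proposition \ref{tensor_IT0}: applying it to the GV-inverse system $\{\Omega_e\}$ and the locally free IT$_0$ sheaf $H=\mathcal{E}\otimes\hat{L}^\vee$ yields that $\Omega\otimes H=(\Omega\otimes\mathcal{E})\otimes\hat{L}^\vee$ satisfies IT$_0$, and in particular $H^i\bigl(A,(\Omega\otimes\mathcal{E})\otimes\hat{L}^\vee\bigr)=0$ for all $i>0$ and all sufficiently ample $L$ on $\hat{A}$. Since this is exactly the vanishing of the source of the homomorphism appearing in condition (1') of Theorem \ref{main} for the system $\{\Omega_e\otimes\mathcal{E}\}$, that homomorphism is automatically zero for $L$ sufficiently ample, and therefore $\{\Omega_e\otimes\mathcal{E}\}$ is a GV-inverse system.

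I expect the main subtlety — more a matter of choosing the right formulation than a deep obstacle — to be which of the equivalent conditions to target: one cannot hope to verify condition (1) directly for every ample line bundle on $\hat{A}$, because the Generic Vanishing hypothesis on $\mathcal{E}$ only produces cohomology vanishing for sufficiently ample twists, so routing through the asymptotic condition (1') is what makes the argument go through. A more computational alternative would be to use Mukai's exchange of tensor products and Pontryagin products to identify $R\hat{S}(D_A(\Omega_e\otimes\mathcal{E}))$ with the convolution of $\Lambda_e$ and the coherent sheaf $R^0\hat{S}(D_A(\mathcal{E}))$ and then argue that this convolution is concentrated in degree $0$; but that forces one to analyze higher direct images along the addition map $\hat{A}\times\hat{A}\to\hat{A}$, which the route above avoids entirely.
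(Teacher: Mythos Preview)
Your proposal is correct and follows essentially the same route as the paper's proof: show that $\mathcal{E}\otimes\hat{L}^\vee$ satisfies IT$_0$ for $L$ sufficiently ample, then apply Proposition \ref{tensor_IT0} and conclude via condition (1') of Theorem \ref{main}. The only cosmetic differences are that the paper cites \cite[Theorem~2.3(2)]{PPIII} for the IT$_0$ claim rather than arguing directly via translates of $L$, and it omits the Mittag-Leffler bookkeeping you spelled out.
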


\begin{proof}
Let $L$ be a sufficiently ample line bundle on $\hat{A}$. Since
$\mathcal{E}$ satisfies the Generic Vanishing conditions, by
\cite[Theorem 2.3(2)]{PPIII}, $\mathcal{E}\otimes \hat{A}^\vee$
satisfies IT$_0$. By Proposition \ref{tensor_IT0}, $\Omega\otimes
\mathcal{E}\otimes \hat{A}^\vee$ also satisfies IT$_0$. In
particular, $H^i(A, \Omega\otimes \mathcal{E}\otimes
\hat{A}^\vee)=0$. The proposition follows.
\end{proof}

\end{document}